\title[Amenability, Optimal Transport and Ergodic Theorems]{Amenability, Optimal Transport and Abstract Ergodic Theorems}
\author {Christian Rosendal}
\address{Department of Mathematics\\University of Maryland\\4176 Campus Drive - William E. Kirwan Hall\\College Park, MD 20742-4015\\USA}
\email{rosendal@umd.edu}
\urladdr{sites.google.com/view/christian-rosendal/}
\newcommand{\norm}[1]{\lVert#1\rVert}
\newcommand{\Norm}[1]{\big\lVert#1\big\rVert}
\newcommand{\NORM}[1]{\Big\lVert#1\Big\rVert}
\newcommand{\forkindep}[1][]{\mathop{\mathop{\vcenter{\hbox{\oalign{\noalign{\kern-.3ex}
\hfil$\vert$\hfil\cr\noalign{\kern-.7ex}$\smile$\cr\noalign{\kern-.3ex}}}}}\displaylimits_{#1}}}
\newcommand{\maps}[1]{\mathop{\overset{#1}\longrightarrow}}
\newcommand{\acts}[1]{\mathop{\overset{#1}\curvearrowright}}
\newcommand{\Mgd}[2]{\big\{  {#1}\;\big|\; {#2} \big\} }
\newcommand{\maths}[1]{\[\begin{split}{#1}\end{split}\]}
\newcommand {\Z}{\mathbb Z}
\newcommand {\Q}{\mathbb Q}
\newcommand {\R}{\mathbb R}
\newcommand {\M}{\mathbb M}
\newcommand{\eps}{\epsilon}
\newcommand{\iso}{\cong}
\newcommand{\tom} {\emptyset}
\newcommand{\begr}{\!\upharpoonright}
\newcommand{\equi}{\Leftrightarrow}
\newcommand{\Lim}[1]{\mathop{\longrightarrow}\limits_{#1}}
\newcommand {\Del}{ \; \Big| \;}
\newcommand {\del}{ \; \big| \;}
\newcommand {\go} {\mathfrak}
\newcommand {\ku} {\mathcal}
\newcommand{\ov}{\overline}
\newcommand{\inv}{^{-1}}
\renewcommand {\a} {\forall}
\newcommand {\spa} {\sf span}
\theoremstyle{plain}
\newtheorem{thm}{Theorem}[section]
\newtheorem{cor}[thm]{Corollary}
\newtheorem{lemme}[thm]{Lemma}
\newtheorem{prop} [thm] {Proposition}
\newtheorem{defi} [thm] {Definition}
\newtheorem{prob}[thm]{Problem}
\theoremstyle{definition}
\newtheorem{rem}[thm]{Remark}
\newtheorem{exa}[thm]{Example}
\definecolor{groen}{rgb}{0,0.5,.7}
\definecolor{gul}{rgb}{0.94,0.8,0}
\definecolor{blaa}{rgb}{0.16,0,0.6}
\definecolor{roed}{rgb}{1,0,0}
\begin{document}
\subjclass[2020]{Primary: 43A07. Secondary: 22F50, 46E15 }

\keywords{Amenability,  Polish groups, Optimal transport, spaces of Lipschitz functions, Lipschitz free spaces, cohomology of affine actions on Banach spaces}
\thanks{The author was partially supported by the NSF through  awards DMS 2204849 and DMS 2246986.}

\maketitle

\begin{abstract}
Using tools from the theory of optimal transport, we establish several results concerning isometric actions of amenable topological groups with potentially unbounded orbits. Specifically, suppose $d$ is a compatible left-invariant metric on an amenable topological group $G$ with no non-trivial homomorphisms to $\mathbb R$. Then, for every finite subset $E\subseteq G$ and $\epsilon>0$, there is a finitely supported probability measure $\beta$ on $G$ such that
$$
\max_{g,h\in E}\, {\sf W}(\beta g, \beta h)<\eps,
$$
where ${\sf W}$ denotes the Wasserstein distance between probability measures on the metric space $(G,d)$. When $d$ is the word metric on a finitely generated group $G$, this strengthens a well known theorem of Rei\-ter \cite{reiter} and, when $d$ is bounded, recovers a result of Schneider--Thom \cite{thom}. Furthermore, when $G$ is locally compact, $\beta$ may be replaced by an appropriate probability density $f\in L^1(G)$.

Also, when $G\curvearrowright X$ is a continuous isometric action on a metric space, the space of Lipschitz functions on the quotient $X/\!\!/G$ is isometrically isomorphic to a $1$-complemented subspace of the Lipschitz functions on $X$. And, when additionally $G$ is skew-amenable, there is a $G$-invariant  contraction 
$$
\go {Lip}\, X \maps S\go{Lip}(X/\!\!/G)
$$
so that $(S\phi\big)\big(\ov{Gx}\big)=\phi(x)$ whenever $\phi$ is constant on every orbit of $G\curvearrowright X$. This latter extends results of C\'uth--Doucha \cite{cuth} from the setting of locally compact or balanced groups.
\end{abstract}

\tableofcontents

\section{Introduction}

Recently, in connection with the general study of Polish topological groups, there has been a renewed interest in the concept of amenability for non-locally compact topological groups (see, for  example, the papers \cite{cuth, juschenko, moore, thom2,thom}). The present paper is a further contribution to that direction of research and aims, on the one hand, to extend the results of \cite{cuth} and \cite{thom} by considering both general amenable topological groups and, more  importantly, allowing metrics of potentially infinite diameter.  In particular, this leads us to an improvement of H. Reiter's well-known criterion for amenability \cite{reiter} even in the case of finitely generated groups (see Corollary \ref{intro:reiter} below).

Our first main result provides an extension of a recent characterisation of amena\-bi\-lity for general topological groups  due to F. M. Schneider and A. Thom \cite{thom}. Two issues come into play here. First of all, we reformulate the result as a statement about optimal transport in topological groups. This allows us to introduce the machinery of {\em Arens--Eells} or {\em transportation cost} spaces and to apply the needed functional analytical machinery to extend the characterisation to potentially unbounded \'ecarts.

Recall first that an {\em \'ecart} or {\em pseudo-metric} $d$ is a metric except that one may have $d(x,y)=0$ for distinct points $x,y$. Also, for a group $G$, let $\R G$ denote the real group algebra of $G$ and $\Delta G$ the convex hull of $G$ in $\R G$. If then $d$ is an \'ecart on $G$, we can define the {\em Arens--Eells norm}\footnote{
Depending on the perspective, this is also known as the {\em Kantorovich--Rubinstein norm}, {\em Wasserstein}, {\em optimal transport} or {\em earth movers distance}. See further comments in Sections \ref{sec:Lipschitz} and \ref{sec:KR}.
}  
$\norm{\alpha-\beta}_{\AE}$ between $\alpha,\beta\in \Delta G$ as the cost of an optimal transport between $\alpha$, viewed as a collection of manufacturers whose total production is one unit mass, and $\beta$, viewed as a collection of consumers with total consumption one, and where the cost of transporting one unit mass from $g\in G$ to $f\in G$ is just $d(g,f)$. That is,
$$
\norm{\alpha-\beta}_{\AE}=\inf\Big(\sum_{i=1}^n|a_i|d(g_i,f_i)\Del \alpha-\beta=\sum_{i=1}^na_i(g_i-f_i)\Big).
$$


\begin{thm}\label{intro:amen-char}
Suppose $d$ is a continuous left-invariant \'ecart on an amenable topological group $G$ and assume that either $d$ is bounded  or that $G$ has no non-trivial continuous homomorphisms to $\R$. 
Then, for every finite subset $E\subseteq G$, we have 
$$
\inf_{\beta\in \Delta G}\;\;\max_{g,f\in E}\;\;\norm{\beta g-\beta f}_{\AE}=0.
$$
\end{thm}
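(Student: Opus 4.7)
My plan is to dualize via Kantorovich--Rubinstein, apply Hahn--Banach to a negation, and then invoke amenability through a fixed-point theorem for the $G$-action on Lipschitz functions.

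By duality, $\|\mu\|_{\AE} = \sup\{\langle\phi,\mu\rangle : \phi\colon G\to\R,\ \|\phi\|_{\mathrm{Lip}}\leq 1\}$ for any zero-mass $\mu\in\R G$, and a direct computation gives $\langle\phi,\beta g-\beta f\rangle = (L_\beta\phi)(g)-(L_\beta\phi)(f)$, where $L_\beta\phi(x):=\int\phi(hx)\,d\beta(h)$. Left-invariance of $d$ makes $L_\beta$ a contraction on the space of Lipschitz functions, so proving the theorem is equivalent to showing that for every finite $E$ there is $\beta\in\Delta G$ whose averaging operator simultaneously flattens the oscillation over $E$ of every $\phi$ in the unit Lipschitz ball.

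Assume for contradiction that $\inf_\beta\max_{g,f\in E}\|\beta g-\beta f\|_{\AE}\geq\delta>0$. The convex set $S=\{(\beta g-\beta f)_{(g,f)}:\beta\in\Delta G\}$ then lies at distance $\geq\delta$ from the origin of the finite product $\AE(G,d)^{E\times E}$ endowed with the supremum norm. Hahn--Banach separation supplies Lipschitz functions $\phi_{g,f}$, vanishing at $e$, with $\sum_{g,f}\|\phi_{g,f}\|_{\mathrm{Lip}}\leq 1$, such that
$$
\sum_{g,f\in E}\bigl[(L_\beta\phi_{g,f})(g)-(L_\beta\phi_{g,f})(f)\bigr]\geq\delta\quad\text{for every }\beta\in\Delta G. \qquad (\ast)
$$

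Now $G$ acts by linear isometries on the space $\mathrm{Lip}_0(G,d)$ of Lipschitz functions vanishing at $e$ via $(h\cdot\phi)(x):=\phi(hx)-\phi(h)$; the diagonal action on the product of unit balls is weak-$\ast$ continuous and preserves this compact convex subset (via $\mathrm{Lip}_0(G,d)\simeq\AE(G,d)^\ast$). Amenability of $G$ combined with Day's fixed-point theorem yields a fixed point $(\psi_{g,f})$ in the weak-$\ast$ closed convex hull of the orbit of $(\phi_{g,f})$. The fixed-point equation $\psi(hx)=\psi(x)+\psi(h)$ identifies each $\psi_{g,f}$ as a continuous group homomorphism $G\to\R$, and under the stated hypothesis (or, when $d$ is bounded, because a continuous Lipschitz homomorphism on a bounded metric is bounded hence zero) every such $\psi_{g,f}$ vanishes. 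Thus $0$ is in this closed convex hull, and there exist $\beta_n\in\Delta G$ (a net, or a sequence in the separable case) with $L_{\beta_n}\phi_{g,f}-\int\phi_{g,f}\,d\beta_n\to 0$ weak-$\ast$. Evaluating at $g$ and $f$ kills the common constant, so $(L_{\beta_n}\phi_{g,f})(g)-(L_{\beta_n}\phi_{g,f})(f)\to 0$ for each pair, and summing contradicts $(\ast)$.

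The main obstacle is the fixed-point step: one needs to match the form of amenability used by the paper for the topological group $G$ with a version of Day's theorem that covers this (weak-$\ast$ continuous) affine action on a compact convex set. The no-homomorphism hypothesis enters only at the very end, to force the produced fixed point to be zero; the bounded case is unified with the unbounded case since a bounded metric automatically excludes nontrivial continuous $\R$-valued homomorphisms.
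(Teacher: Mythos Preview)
Your argument is correct and takes a genuinely different route from the paper. The paper proves this as Theorem~\ref{amen-char} via the abstract mean-operator machinery of Theorem~\ref{thm:alaoglu}: for any isometric Banach $G$-module $(V,\pi)$ with $G$ amenable, the mean operator $M$ satisfies $\ker M = V_G$ and $\inf_{\beta}\max_{v \in F}\|\pi(\beta)v\| = 0$ for every finite $F \subseteq V_G$; one then applies this to $V = \AE(G/d)$, observing that each $\delta_g - \delta_f$ lies in $DX$ and invoking Lemma~\ref{rank zero} to obtain $DX = (\AE X)_G$ under either hypothesis. You instead bypass the machinery entirely: dualize, separate by Hahn--Banach, and apply the fixed-point definition of amenability directly to the linear $G$-action $(h\cdot\phi)(x) = \phi(hx) - \phi(h)$ on the $w^*$-compact unit ball of $\mathrm{Lip}_0(G,d)$, noting that fixed points are $d$-Lipschitz homomorphisms $G\to\R$ and hence vanish under either hypothesis. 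The paper's route buys a general framework that simultaneously delivers the complementation theorems of Sections~\ref{decompo} and~\ref{amen gps}; yours buys a short self-contained proof of this one statement. Two small remarks: your worry about ``matching the form of amenability'' is moot, since the paper \emph{defines} amenability precisely via the fixed-point property you use, and joint continuity of your action on $G\times(\text{ball})$ follows readily from continuity of $d$ together with the uniform Lipschitz bound; and your final sentence is slightly misphrased---a bounded $d$ does not exclude arbitrary continuous homomorphisms $G\to\R$, only $d$-Lipschitz ones---but the argument in the body of your proof handles this correctly.
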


Remarkably, Theorem \ref{intro:amen-char} appears to provide new information even about discrete amenable groups. Unlike Reiter's criterion for amenability \cite{reiter}, we are able to bring the large scale geometry of finitely generated groups into play. For this, recall that, if $S\subseteq G$ is a finite generating set for a group $G$, the associated word metric is defined by
$$
d_S(g,f)=\min(k\del g=fs_1s_2\cdots s_k \text{ for some }s_i\in S^\pm).
$$
As is easy to see, any other choice of $S$ results in a word metric bi-Lipschitz equivalent to the one above. In particular, the choice of specific finite generating set is immaterial for Corollary \ref{intro:reiter} below.

Observe that elements of $\Delta G$ may alternatively be viewed as positive elements of the unit sphere of the Banach space $\ell^1(G)$. Furthermore, as the minimal positive distance of the word metric $d_S$ is $1$, we find that
$$
\norm{\alpha-\beta}_{\ell^1(G)}\leqslant 2\,\norm{\alpha-\beta}_{\AE}
$$
for all $\alpha,\beta\in \Delta G$. Reiter's criterion applied to a finitely generated group $G$ then states that
$$
\inf_{\beta\in \Delta G}\;\;\max_{g,f\in E}\;\;\norm{\beta g-\beta f}_{\ell^1(G)}=0
$$
for any finite subset $E\subseteq G$ and thus Theorem \ref{intro:amen-char} is seen to be a strengthening of Reiter's criterion in this setting.  With a little more work, we obtain the following.

\begin{cor}\label{intro:reiter}
Under the assumptions of Theorem \ref{intro:amen-char}, for all finite subsets $E\subseteq G$ and $\eps>0$, 
there are $h_1,\ldots, h_n\in  G$ so that
$$
\min_{\sigma\in {\sf Sym}(n)}\;\;\frac1n \sum_{i=1}^nd(h_ig,h_{\sigma(i)})<\eps
$$
for all $g\in E$.
\end{cor}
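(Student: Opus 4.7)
The plan is to deduce the corollary from Theorem \ref{intro:amen-char} by discretizing the convex combination produced there into a uniform probability measure on $n$ atoms, and then invoking the Birkhoff--von Neumann theorem to express the resulting AE norm as a minimum over permutations.

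First I would enlarge $E$ to $E \cup \{1\}$ (harmless, since the conclusion is only required for $g \in E$) and apply Theorem \ref{intro:amen-char} with tolerance $\eps/2$ to produce some $\beta = \sum_{i=1}^{m} a_i g_i \in \Delta G$ with $\norm{\beta g - \beta}_{\AE} < \eps/2$ for every $g \in E$. For each sufficiently large $n$, I would then choose non-negative integers $k_1,\ldots,k_m$ with $\sum_i k_i = n$ and $|k_i/n - a_i| \leq 1/n$ (by rounding $na_i$ down and distributing the residual), and set $\beta_n := \frac{1}{n}\sum_i k_i g_i \in \Delta G$. Since $\sum_i(k_i/n - a_i) = 0$, one has $\beta_n - \beta = \sum_i (k_i/n - a_i)(g_i - g_1)$, so by the very definition of the AE norm $\norm{\beta_n - \beta}_{\AE} \leq (m/n)\max_i d(g_i, g_1)$, with the same estimate for the $g$-translates (replacing $g_i$ by $g_ig$). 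Setting $D := \max\{d(g_ig, g_1g) \colon i \leq m,\, g \in E \cup \{1\}\} < \infty$, the triangle inequality yields
$$
\norm{\beta_n g - \beta_n}_{\AE} \leq \norm{\beta_n g - \beta g}_{\AE} + \norm{\beta g - \beta}_{\AE} + \norm{\beta - \beta_n}_{\AE} < \frac{\eps}{2} + \frac{2Dm}{n},
$$
which falls below $\eps$ for $n$ large enough.

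To finish, I would list the atoms of $\beta_n$ with multiplicity as $h_1,\ldots,h_n \in G$, so that $\beta_n = \frac{1}{n}\sum_j h_j$ and $\beta_n g = \frac{1}{n}\sum_j h_j g$. The AE norm of their difference is just the Wasserstein-$1$ distance between two uniform $n$-point probability measures on $(G,d)$; since the extreme points of the Birkhoff polytope of $n \times n$ doubly stochastic matrices are permutation matrices, the associated transport linear program is attained at a matching, giving
$$
\norm{\beta_n g - \beta_n}_{\AE} = \frac{1}{n} \min_{\sigma \in {\sf Sym}(n)} \sum_{i=1}^{n} d(h_i g, h_{\sigma(i)}),
$$
which together with the previous estimate yields the corollary. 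The one point to verify is that the discretization error $\norm{\beta - \beta_n}_{\AE}$ can be made small even when $d$ is unbounded; this is not a serious obstacle because the perturbation is supported on the fixed finite sets $\{g_ig \colon i \leq m\}$ for $g \in E \cup \{1\}$, on which $d$ is automatically bounded.
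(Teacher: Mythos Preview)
Your argument is correct and follows essentially the same route as the paper: obtain $\beta\in\Delta G$ from Theorem~\ref{intro:amen-char}, perturb it to a rational convex combination $\frac1n\sum_i h_i$, and then invoke the Birkhoff--von Neumann theorem (the paper's Lemma~\ref{konig}) to rewrite the Arens--Eells norm as a minimum over permutations. The only difference is cosmetic: the paper handles the perturbation in one line (``by perturbing $\beta$, we may assume that $\beta$ is a rational convex combination''), whereas you spell out the discretisation and the $O(m/n)$ error bound explicitly.
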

In particular, Corollary \ref{intro:reiter} applies to discrete groups such as the infinite dihedral group $D_\infty=(\Z/2\Z)*(\Z/2\Z)$, but fails for the case of $\Z$ and $\R$.

Theorem \ref{intro:amen-char} has a version more suitable for locally compact groups,second-countable formulated in terms of probability densities on $G$. Namely, let $G$ be a locally compact second-countable group equipped with its left Haar measure. Suppose also that $d$ is a compatible left-invariant metric on $G$ and consider the space $L^1_{d,+,1}(G)$ of probability densities on $G$, that is, non-negative $f\in L^1(G)$ with $\norm f_{L^1}=1$, satisfying the additional assumption 
$$
\int d(x,1)f(x)\, dx<\infty.
$$
We define the {\em Wasserstein distance} between $f,h\in L^1_{d,+,1}(G)$ to be
\maths{
{\sf W}(f,h)=&\inf_{\eta} \int d(x,y)\, d\eta(x,y)\\
=&\inf_{({\sf X},{\sf Y})}\mathbb E \,d({\sf X}, {\sf Y}),
}
where the infimum is taken over all probability measures $\eta$ on $G\times G$ whose marginals are $f\, dx$ and $h\, dx$,  respectively over all pairs of $G$-valued random variables ${\sf X}$ and ${\sf Y}$ with densities $f$ and $h$. Thus ${\sf W}(f,h)$ measures the cost of an optimal transport of the probability density $f$ to $h$.

\begin{thm}\label{intro:loccomp}
Suppose $G$ is an amenable locally compact second-countable group and $d$ is a compatible left-invariant metric on $G$. Assume also that $G$ has no non-trivial continuous homomorphisms $G\to \R$. Then, for every compact subset $C\subseteq G$ and $\eps>0$, there is a compactly supported $f\in L^1_{d,+,1}(G)$ so that 
$$
{\sf W}(R_yf,R_zf)<\eps
$$
for all $y,z\in C$.
\end{thm}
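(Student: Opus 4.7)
The plan is to deduce Theorem \ref{intro:loccomp} from Theorem \ref{intro:amen-char} via a convolution-smoothing argument, converting a discrete near-invariant $\beta\in\Delta G$ into a compactly supported probability density $f$. Two elementary consequences of left-invariance of $d$ underpin the argument. First, for any probability measure $\mu$ on $G$ and $y,y'\in G$, coupling $X\sim\mu$ with itself via right-multiplication gives $d(Xy,Xy')=d(y,y')$ pointwise, so $y\mapsto \mu y$ is $1$-Lipschitz from $(G,d)$ into the Wasserstein space. Second, left convolution is a Wasserstein contraction: for probability measures $\rho,\mu,\nu$, taking $(Y,Y')$ an optimal coupling for $(\mu,\nu)$ and $X\sim\rho$ independent produces a coupling $(XY,XY')$ of $(\rho*\mu,\rho*\nu)$ of transport cost $d(XY,XY')=d(Y,Y')$ by left-invariance; hence ${\sf W}(\rho*\mu,\rho*\nu)\leq {\sf W}(\mu,\nu)$.

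Given $\eps>0$ and the compact set $C\subseteq G$, I would fix $\eta,\eps'>0$ with $2\eta+\eps'<\eps$ and select a finite $\eta$-net $F\subseteq C$ (possible since $C$ is totally bounded in $(G,d)$). Theorem \ref{intro:amen-char} applied with $E=F$ yields a finitely supported $\beta\in\Delta G$ with $\norm{\beta y-\beta z}_{\AE}<\eps'$ for all $y,z\in F$; by Kantorovich--Rubinstein duality the Arens--Eells norm on $\Delta G-\Delta G$ is nothing but the Wasserstein distance between the corresponding probability measures, so ${\sf W}(\beta y,\beta z)<\eps'$. Now choose a continuous, compactly supported $\rho\geq 0$ on $G$ with $\int \rho\, dx=1$ and set $f=\rho*\beta$; since $\beta$ is finitely supported and $\rho$ compactly supported, $f$ is a compactly supported probability density, hence $f\in L^1_{d,+,1}(G)$.

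Interpreting $R_y f$ so that $R_y f\cdot dx=(\rho*\beta)y=\rho*(\beta y)$, the contraction property yields for $y,z\in F$
\[
{\sf W}(R_y f,R_z f)= {\sf W}\big(\rho*(\beta y),\rho*(\beta z)\big)\leq {\sf W}(\beta y,\beta z)<\eps'.
\]
For arbitrary $y,z\in C$, choose $y',z'\in F$ with $d(y,y'),d(z,z')<\eta$ and combine the triangle inequality with the $1$-Lipschitz property of $y\mapsto R_y f$:
\[
{\sf W}(R_y f,R_z f)\leq d(y,y')+{\sf W}(R_{y'}f,R_{z'}f)+d(z',z)<2\eta+\eps'<\eps.
\]

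The principal technicality is bookkeeping around the identification $R_y f\cdot dx=(f\, dx)*\delta_y$ when $G$ is non-unimodular: the density of the right translate of $f\cdot dx$ carries a modular-function factor $\Delta(y^{-1})$. Since $\Delta$ is bounded above and below on the compact set $C$, this at most rescales constants and is harmlessly absorbed into the choice of $\eps'$. Otherwise the proof is a clean three-step triangle estimate: Theorem \ref{intro:amen-char} provides near-invariance on the discrete level, left convolution with $\rho$ preserves it, and the $1$-Lipschitz property of right translation transfers it from the $\eta$-net $F$ back to $C$.
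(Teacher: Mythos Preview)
Your strategy coincides with the paper's: reduce to Theorem~\ref{intro:amen-char}, smooth the resulting $\beta\in\Delta G$ into a compactly supported density by left convolution with a bump $\rho$, and exploit that left convolution is a Wasserstein contraction for a left-invariant metric together with the $1$-Lipschitz dependence on the translation parameter. The execution, however, contains three related bookkeeping errors.

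First, the identification $R_yf\cdot dx=(f\,dx)*\delta_y$ is off by an inverse. With the paper's convention $R_yf=f(\,\cdot\,y)$ and $G$ unimodular one has $(f\,dx)*\delta_{y^{-1}}=(R_yf)\,dx$ (this is exactly Lemma~\ref{pi(f)} specialised to $\Delta\equiv1$), so your contraction step actually gives ${\sf W}(R_yf,R_zf)\leqslant\norm{\beta y^{-1}-\beta z^{-1}}_{\AE}$, and Theorem~\ref{intro:amen-char} must be invoked with $E=F^{-1}$ rather than $E=F$. Second, the same inversion infects the $1$-Lipschitz estimate: the coupling $(Xy^{-1},X(y')^{-1})$ yields ${\sf W}(R_yf,R_{y'}f)\leqslant d\big(y^{-1},(y')^{-1}\big)$, not $\leqslant d(y,y')$, and for a merely left-invariant $d$ these quantities are different. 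The net $F$ must therefore be chosen so that $\sup_{y\in C}\inf_{y'\in F}d\big(y^{-1},(y')^{-1}\big)<\eta$, precisely as in the paper's proof; compactness of $C^{-1}$ makes this harmless. Third, the claim that a non-trivial modular factor can be ``absorbed into the choice of $\eps'$'' is wrong: if $\Delta(y)\neq1$ then $\norm{R_yf}_{L^1}\neq1$, so $R_yf$ is not a probability density and ${\sf W}(R_yf,R_zf)$ is not even defined as a Wasserstein distance. The correct observation, made at the outset of the paper's proof, is that $\log\Delta\in{\sf Hom}(G,\R)=\{0\}$ forces $\Delta\equiv1$. With these three corrections your argument is complete and coincides with the paper's.
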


When $X$ is a metric space, let ${\sf Lip}\, X$ denote the vector space of all real-valued Lipschitz functions on $X$ and let 
$$
\go{Lip}\,X={\sf Lip}\, X/\{\text{constant functions}\}.
$$
Then $\go{Lip}\,X$ is a Banach space when equipped with the norm $L(\cdot)$ measuring the optimal Lipschitz constant of a function. Alternatively, $\go{Lip}\,X$ can be identified with the space ${\sf Lip}_0X$ of Lipschitz functions  taking the value $0$ at some specified point $p\in X$.

Our next main result generalises results of M. C\'uth and M. Doucha \cite{cuth} from locally compact to general amenable groups and to isometric actions with potentially unbounded orbits. However, in order to do this, one must exclude groups such as $\bigoplus_{n=1}^\infty\Z$ and $\prod_{n=1}^\infty\R$ with too many continuous homomorphisms to $\R$. So, for a topological group $G$, let ${\sf Hom}(G,\R)$ denote the vector space of continuous homomorphisms from  $G$ to $\R$. In Theorem \ref{intro:cuth}, we then have to assume that ${\sf Hom}(G,\R)$ has finite dimension. For example, this happens when $G$ is {\em topologically finitely generated}, that is, contains a finitely generated dense subgroup. Similarly, by considering the associated homo\-morphism between the respective Lie algebras, this also applies to all connected locally compact Lie groups such as $G=\R^n$. 
A slightly more refined statement dealing with groups for which ${\sf Hom}(G,\R)$ may be infinite-dimensional can be found in Theorem \ref{thm:cuth1}.

\begin{thm}\label{intro:cuth}
Suppose $G$ is an amenable topological group and $G\curvearrowright X$ is a continuous isometric action on a metric space. Assume also that the vector space ${\sf Hom}(G,\R)$ of continuous homomorphisms from $G$ to $\R$ is finite-dimensional.
Then the closed linear subspace 
$$
\{\phi\in \go{Lip}\, X\del \phi\text{ is constant on every orbit of }G\curvearrowright X\}
$$
is complemented in  $\go {Lip}\, X$. 
\end{thm}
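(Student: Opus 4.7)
Set $V := \go{Lip}\,X$, a Banach space under the Lipschitz seminorm $L$, and note that the isometric action of $G$ on $X$ extends to a $w^*$-continuous linear isometric action on $V$ via $(g\cdot\phi)(x):=\phi(g^{-1}x)$. Let $Y\subseteq V$ denote the closed subspace of orbit-constant Lipschitz functions and let $V^G\subseteq V$ be the fixed subspace of the $G$-action, so that $Y\subseteq V^G$. The plan is to factor the desired projection as $V\to V^G\to Y$, using amenability for the first arrow and the finite-dimensionality of ${\sf Hom}(G,\R)$ for the second.

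\emph{First arrow.} By amenability, $G$ admits a left-invariant mean $m$ on its bounded left-uniformly continuous functions. For $\phi\in V$ and $x,y\in X$, the function $\tilde\phi_{x,y}(h):=\phi(h^{-1}x)-\phi(h^{-1}y)$ is bounded by $L(\phi)\,d_X(x,y)$ and left-uniformly continuous by continuity of the action. Because $\tilde\phi_{x,y}+\tilde\phi_{y,z}=\tilde\phi_{x,z}$, the prescription
$$(P_1\phi)(x)-(P_1\phi)(y):=m(\tilde\phi_{x,y})$$
defines $P_1\phi$ up to an additive constant, giving a well-defined element of $V$ with $L(P_1\phi)\le L(\phi)$. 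The identity $\tilde\phi_{gx,gy}(h)=\tilde\phi_{x,y}(g^{-1}h)$ together with left-invariance of $m$ forces $(P_1\phi)(gx)-(P_1\phi)(x)$ to depend only on $g$ and to be a continuous homomorphism $G\to\R$, so $[P_1\phi]\in V^G$. For $[\phi]\in V^G$ already, $\tilde\phi_{x,y}$ is the constant $\phi(x)-\phi(y)$, whence $m(\tilde\phi_{x,y})=\phi(x)-\phi(y)$ and $P_1\phi=\phi$ in $V$. Hence $P_1:V\to V^G$ is a norm-one projection. An alternative, more in the spirit of the paper, would use the Reiter-type net $\beta\in\Delta G$ produced by Theorem \ref{intro:amen-char} and extract a $w^*$-cluster point of the averaging operators $T_\beta\phi:=\sum_i a_i(g_i\cdot\phi)$ in the compact space of contractions on $V$.

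\emph{Second arrow.} For $[\phi]\in V^G$, the difference $g\cdot\phi-\phi$ is a constant $c_\phi(g)\in\R$ independent of the representative. The cocycle calculation
$$c_\phi(gh)=g\cdot(h\cdot\phi-\phi)+(g\cdot\phi-\phi)=c_\phi(h)+c_\phi(g),$$
valid because $G$ acts trivially on constants, combined with the $w^*$-continuity of $g\mapsto g\cdot\phi$, exhibits $c_\phi$ as a continuous homomorphism $G\to\R$. The resulting linear map $\Psi:V^G\to{\sf Hom}(G,\R)$, $[\phi]\mapsto c_\phi$, has kernel exactly $Y$, so $V^G/Y$ embeds into the finite-dimensional space ${\sf Hom}(G,\R)$. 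Thus $Y$ has finite codimension in the Banach space $V^G$ and is complemented there by the standard fact that closed finite-codimension subspaces are complemented. Composing the two projections yields a bounded projection $V\to Y$.

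The main technical point is the construction of $P_1$: one must verify carefully that each $\tilde\phi_{x,y}$ lies in the class on which the paper's notion of amenability supplies an invariant mean, and that the various well-definedness checks go through in the stated generality. Once this is handled, the remainder of the argument is routine, and the hypothesis on ${\sf Hom}(G,\R)$ enters only in the second, purely finite-dimensional step.
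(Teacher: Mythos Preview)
Your two–step strategy is exactly the paper's: first project $\go{Lip}\,X$ onto $(\go{Lip}\,X)^G$ using the invariant mean (this is the adjoint mean operator $M^*$ of Theorem~\ref{thm:alaoglu}), and then observe via the map $\Psi\colon(\go{Lip}\,X)^G\to{\sf Hom}(G,\R)$ (the paper's operator $R$ of Lemma~\ref{R}) that the orbit-constant functions sit inside $(\go{Lip}\,X)^G$ with finite codimension, hence are complemented there.

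There is, however, a genuine mix-up in your first arrow that should be repaired. With your choice $\tilde\phi_{x,y}(h)=\phi(h^{-1}x)-\phi(h^{-1}y)$, the function lies in ${\sf RUCB}(G)$, not ${\sf LUCB}(G)$: the estimate you need for left-uniform continuity is $|\tilde\phi_{x,y}(hv)-\tilde\phi_{x,y}(h)|\leqslant L(\phi)\,\big(d(v^{-1}h^{-1}x,h^{-1}x)+d(v^{-1}h^{-1}y,h^{-1}y)\big)$, and there is no reason for $d(v^{-1}z,z)$ to be small uniformly in $z$. Moreover, the invariance you invoke, $m\big(\psi(g^{-1}\,\cdot)\big)=m(\psi)$, is $\lambda$-invariance, which in the paper's terminology is \emph{skew}-amenability, not amenability. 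The fix is a sign: set instead $\tilde\phi_{x,y}(h):=\phi(hx)-\phi(hy)=\langle\phi,\pi(h)(x-y)\rangle$. Then $|\tilde\phi_{x,y}(hv)-\tilde\phi_{x,y}(h)|\leqslant L(\phi)\,\norm{\pi(v)(x-y)-(x-y)}_{\AE}\to 0$ as $v\to 1$, uniformly in $h$, so $\tilde\phi_{x,y}\in{\sf LUCB}(G)$; and now $\tilde\phi_{gx,gy}=\rho(g)\tilde\phi_{x,y}$, so the $\rho$-invariant mean $\go m$ supplied by amenability gives the desired conclusion. With this correction your $P_1$ coincides with the restriction of $M^*$ to $\go{Lip}\,X$, and the rest of your argument goes through verbatim.
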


When $G\curvearrowright X$ is an isometric action on a metric space, let $X/\!\!/G$ denote the collection of closures of $G$-orbits on $X$. Then $X/\!\!/G$  is a metric space when equipped with the Hausdorff distance,
$$
d_H\big(\ov{Gx},\ov{Gy}\big)=\max\big\{\sup_{z\in \ov{Gx}}{\sf dist}(z, \ov{Gy}),\sup_{z\in \ov{Gy}}{\sf dist}(z, \ov{Gx})\big\}.
$$
As is easy to verify, $\go{Lip}(X/\!\!/G)$ is isometrically isomorphic to the space $\{\phi\in \go{Lip}\, X\del \phi\text{ is constant on every $G$-orbit}\}$ and we thus get the following corollary.
\begin{cor}
Under the assumptions of Theorem \ref{intro:cuth}, $\go{Lip}(X/\!\!/G)$ is isometrically isomorphic to a complemented subspace of $\go{Lip}\, X$.
\end{cor}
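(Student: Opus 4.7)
The plan is to combine Theorem \ref{intro:cuth} with the isometric identification sketched just above the corollary. Set
$$
V=\{\phi\in \go{Lip}\, X\del \phi\text{ is constant on every orbit of }G\curvearrowright X\}.
$$
By Theorem \ref{intro:cuth}, $V$ is complemented in $\go{Lip}\, X$, so it suffices to produce an isometric isomorphism $V\cong \go{Lip}(X/\!\!/G)$; the corollary then follows by transporting a projection onto $V$ across this isomorphism.

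To build the isomorphism, I would first verify that the quotient map $q\colon X\to X/\!\!/G$, $x\mapsto \ov{Gx}$, is $1$-Lipschitz. Because $G$ acts by isometries, for every $g\in G$ one has $d(gx,gy)=d(x,y)$, so $\mathsf{dist}(x,\ov{Gy})\leqslant d(x,y)$ and, by symmetry, $d_H(\ov{Gx},\ov{Gy})\leqslant d(x,y)$. Pull-back along $q$ therefore gives a $1$-Lipschitz linear map $\psi\mapsto \psi\circ q$ from $\go{Lip}(X/\!\!/G)$ into $V$. Conversely, any $\phi\in V$ is continuous, hence constant on every orbit \emph{closure}, and so descends to a well-defined $\widetilde\phi\colon X/\!\!/G\to \R$ with $\phi=\widetilde\phi\circ q$.

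The remaining step is to check that $\phi\mapsto \widetilde\phi$ is an isometry for the Lipschitz seminorm (which is well-defined modulo constants). The bound $L(\phi)\leqslant L(\widetilde\phi)$ follows at once from $1$-Lipschitzness of $q$. For the reverse inequality, given orbits $A=\ov{Gx}$, $B=\ov{Gy}$ and $\eps>0$, choose $b\in B$ with $d(x,b)\leqslant d_H(A,B)+\eps$; then
$$
|\widetilde\phi(A)-\widetilde\phi(B)|=|\phi(x)-\phi(b)|\leqslant L(\phi)\big(d_H(A,B)+\eps\big),
$$
and letting $\eps\to 0$ yields $L(\widetilde\phi)\leqslant L(\phi)$. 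Hence $V$ is isometrically isomorphic to $\go{Lip}(X/\!\!/G)$, and the corollary follows from Theorem \ref{intro:cuth}.

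There is no serious obstacle here: the whole argument is a routine verification once one observes that isometricity of the action is what upgrades $q$ from merely continuous to $1$-Lipschitz with respect to the Hausdorff distance. The substantive content sits entirely in Theorem \ref{intro:cuth}.
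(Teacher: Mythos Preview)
Your proof is correct and follows essentially the same route as the paper: the paper likewise reduces the corollary to the isometric identification of $\go{Lip}(X/\!\!/G)$ with the subspace of orbit-constant functions, which it records as a well-known lemma. The only cosmetic difference is that the paper verifies the isometry on the predual side (constructing the quotient map $\AE X\to \AE(X/\!\!/G)$ and then dualising), whereas you work directly with Lipschitz functions via the $1$-Lipschitz quotient map $q$; the two arguments are dual to one another.
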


The essence of the above results can be understood in terms of integration. Namely, we aim to find a way to integrate potentially unbounded Lipschitz functions defined on metric spaces in a $G$-invariant way. However, for this we need to assume that the group is not only amenable, but also skew-amenable, a concept that is properly introduced in Section \ref{sec:skew}.

\begin{thm}\label{intro:equivproj}
Suppose $G$ is an amenable and skew-amenable topological group and that $G\curvearrowright X$ is a continuous isometric action on a metric space $X$. Assume furthermore that either $X$ has finite diameter or that $G$ has no non-trivial continuous homomorphisms to $\R$. 
Then there is a $G$-invariant  contraction 
$$
\go {Lip}\, X \maps S\go{Lip}(X/\!\!/G)
$$
so that $(S\phi\big)\big(\ov{Gx}\big)=\phi(x)$ whenever $\phi$ is constant on every orbit of $G\curvearrowright X$.
\end{thm}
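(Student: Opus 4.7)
The plan is to realize $S\phi$ via an averaging procedure along $G$-orbits using a bi-invariant mean on $G$, whose existence is to be furnished jointly by amenability (giving left-invariance) and skew-amenability (giving the corresponding right-invariance), with a Day-type averaging producing a mean invariant under both translations. Alternatively, one can pass to a weak$^*$ ultralimit of the approximately invariant measures supplied by Theorem~\ref{intro:amen-char} and its skew counterpart to obtain such a mean on bounded continuous functions.

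Given a bi-invariant mean $m$, for $\phi\in\go{Lip}\,X$ and $x,y\in X$, the map $g\mapsto\phi(gx)-\phi(gy)$ is bounded by $L(\phi)\,d(x,y)$ (since $G$ acts isometrically) and left-uniformly continuous (by continuity of the action), so one may set
$$
\Phi_\phi(x,y)\;=\;m_g\bigl(\phi(gx)-\phi(gy)\bigr).
$$
This is an antisymmetric cocycle with $|\Phi_\phi(x,y)|\leq L(\phi)\,d(x,y)$, and hence integrates to a function $\tilde S\phi\colon X\to\R$, unique modulo an additive constant, with $\tilde S\phi(x)-\tilde S\phi(y)=\Phi_\phi(x,y)$ and $L(\tilde S\phi)\leq L(\phi)$.

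The crux is to show $\tilde S\phi$ is $G$-invariant, i.e.\ that $\psi(g_0):=\Phi_\phi(g_0 x,x)$ vanishes identically. The bound $|\psi(g_0)|\leq L(\phi)\,d(g_0 x,x)$ together with continuity of the action yields continuity of $\psi$. Expanding
$$
\psi(g_0h_0)-\psi(g_0)=m_g\bigl(\phi(gg_0h_0 x)-\phi(gg_0 x)\bigr)
$$
and applying right-invariance of $m$ via the substitution $g\mapsto gg_0^{-1}$ identifies the right-hand side with $\psi(h_0)$; so $\psi\colon G\to\R$ is a continuous homomorphism. The standing hypothesis now forces $\psi\equiv 0$: either $\psi$ is bounded by $L(\phi)\,\mathrm{diam}(X)<\infty$ and so vanishes as a bounded homomorphism, or ${\sf Hom}(G,\R)=0$ outright. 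Thus $\tilde S\phi$ is $G$-invariant and descends to an element $S\phi$ of $\go{Lip}(X/\!\!/G)$, whose Lipschitz constant for the Hausdorff metric satisfies $|S\phi(\ov{Gx})-S\phi(\ov{Gy})|\leq L(\tilde S\phi)\,\inf_{g\in G}d(gx,y)=L(\tilde S\phi)\,d_H(\ov{Gx},\ov{Gy})\leq L(\phi)\,d_H(\ov{Gx},\ov{Gy})$.

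Linearity of $S$ is immediate from linearity of $m$, and the equivariance $S(g_0\phi)=S\phi$ follows from left-invariance of $m$ applied to the substitution $g\mapsto g_0^{-1}g$. Finally, if $\phi$ is constant on each orbit, then $\Phi_\phi(x,y)=\phi(x)-\phi(y)$ since $m(1)=1$, whence $S\phi=\phi$ modulo constants and, after pinning the additive constant appropriately, $(S\phi)(\ov{Gx})=\phi(x)$ as required. The hardest step is the first one: constructing the bi-invariant mean on a class of continuous bounded functions large enough to cover every cocycle $g\mapsto\phi(gx)-\phi(gy)$ arising from $\phi\in\go{Lip}\,X$; this is precisely where both amenability and skew-amenability are indispensable, each hypothesis supplying one side of the invariance.
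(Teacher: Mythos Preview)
Your approach is essentially correct and, once unpacked, coincides with the paper's argument; but the presentation and the allocation of difficulty differ in ways worth noting.

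\textbf{Comparison with the paper.} The paper does not construct a single bi-invariant mean. Instead it proves an abstract result (Theorem~\ref{G-equivariant}): for any isometric Banach $G$-module $(V,\pi)$ with $G$ amenable and skew-amenable, the composition $P=M^*N^*$ of the two adjoint mean operators (coming from the $\rho$-invariant mean $\go m$ and the $\lambda$-invariant mean $\go n$) is a $G$-equivariant norm-one projection of $V^*$ onto $(V^*)^G$. Theorem~\ref{thm:equivproj} then follows by taking $V=\AE X$, so $V^*=\go{Lip}\,X$, invoking Lemma~\ref{rank zero} to identify $(\go{Lip}\,X)^G$ with the functions constant on orbits, and composing with the isometric isomorphism $(DX)^\perp\cong\go{Lip}(X/\!\!/G)$. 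If you unravel $P$ on an atom $\delta_x-\delta_y$ you get $(P\phi)(\delta_x-\delta_y)=\go m_g\go n_h\big(\phi(hgx)-\phi(hgy)\big)$, which is exactly your $\Phi_\phi(x,y)$ once a bi-invariant mean $\go p(F)=\go m_g\big(\go n_h F(hg)\big)$ is plugged in. So the two proofs are the same operator viewed at different levels of abstraction.

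\textbf{The step you flag as hardest is actually routine.} The Day-type averaging works on ${\sf LUCB}(G)$ without further hypotheses: for $\phi\in{\sf LUCB}(G)$ the map $g\mapsto\go n(\rho(g)\phi)$ is again in ${\sf LUCB}(G)$, because $|\go n(\rho(gv)\phi)-\go n(\rho(g)\phi)|\leqslant\|\rho(v)\phi-\phi\|_\infty$, and this tends to $0$ uniformly in $g$ by the very definition of ${\sf LUCB}(G)$. Setting $\go p(\phi)=\go m_g\big(\go n(\rho(g)\phi)\big)$ then gives a mean that inherits $\rho$-invariance from $\go m$ and $\lambda$-invariance from $\go n$ (using that $\rho$ and $\lambda$ commute and that ${\sf LUCB}(G)$ is $\lambda$-invariant). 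Every coefficient function $g\mapsto\phi(gx)-\phi(gy)$ lies in ${\sf LUCB}(G)$, so $\go p$ is defined where you need it. You should state this explicitly rather than defer it.

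\textbf{A labelling slip.} In the paper's conventions, amenability furnishes the $\rho$-invariant mean (invariance under $g\mapsto gg_0^{-1}$, i.e.\ your ``right-invariance''), while skew-amenability furnishes the $\lambda$-invariant mean (your ``left-invariance''). Your opening sentence attributes these the other way round; consequently you credit the homomorphism step to skew-amenability and the equivariance step to amenability, when in fact it is the reverse. This does not affect the validity of the argument, since both invariances are available, but it should be corrected.
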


The operator $S$ should be understood as integrating a Lipschitz function $\phi$ over every $G$-orbit closure $\ov{Gx}$ with output value $S\phi(x)$.

For our last set of results, let us recall that any continuous action $G\overset\alpha\curvearrowright V$ of a topological group $G$ by affine isometries on a Banach space $V$ decomposes uniquely into a strongly continuous  linear isometric action $G\overset\pi\curvearrowright V$ and an associated cocycle $G\overset b\longrightarrow V$, in the sense that
$$
\alpha(g)v=\pi(g)v+b(g)
$$
for all $g\in G$ and $v\in V$. An important theme in geometric group theory (cf., properties (T), (FH) and the Haagerup property) is to understand fixed point properties of such actions under varying geometric and analytic  assumptions on $V$ or on the group $G$. Theorem \ref{intro:affine} below establishes what can be said about such actions of amenable groups without any assumptions on the underlying Banach space $V$.

\begin{thm}\label{intro:affine}
Suppose $G$ is an amenable topological group with no non-trivial continuous homomorphisms $G\to \R$ and let $G\overset\alpha\curvearrowright V$ be a continuous affine isometric action on a Banach space with associated linear isometric action $G\overset\pi\curvearrowright V$. Then
$$
V_G=\ov{\sf span}\{v-\pi(g)v\del v\in V\;\&\; g\in G\}=\{v\in V\del 0\in \ov{\pi(\Delta G)v}\}
$$
is a closed linear subspace of $V$ invariant under the affine action $G\overset\alpha\curvearrowright V$.
\end{thm}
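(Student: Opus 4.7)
Set $W = \ov{\sf span}\{v - \pi(g)v : v \in V, g \in G\}$ and $U = \{v \in V : 0 \in \ov{\pi(\Delta G)v}\}$; since every $\pi(\beta)$ with $\beta \in \Delta G$ is a contraction, $U$ is automatically closed and stable under scalar multiplication. The plan is to establish the equality $W = U$ first, from which the rest of the statement follows easily. For $U \subseteq W$ I would use Hahn--Banach: a functional $f \in V^*$ vanishes on $W$ if and only if $\pi(g)^*f = f$ for every $g \in G$, equivalently $f \circ \pi(\beta) = f$ for every $\beta \in \Delta G$; if $v \in U$ and $\pi(\beta_n)v \to 0$, this forces $f(v) = \lim_n f(\pi(\beta_n)v) = 0$, and Hahn--Banach yields $U \subseteq W$.

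The reverse inclusion $W \subseteq U$ is the delicate part, because it is not evident from the definition that $U$ is closed under addition; this is where Theorem \ref{intro:amen-char} enters crucially. By closedness of $U$, it suffices to show that every finite sum $w = \sum_{i=1}^k (u_i - \pi(g_i)u_i)$ lies in $U$ (scalar coefficients being absorbed into the $u_i$). To that end, I would use the continuous left-invariant \'ecart $d(g, h) = \max_i \|\pi(g)u_i - \pi(h)u_i\|$ on $G$. Given $\eps > 0$ and $E = \{e, g_1, \ldots, g_k\}$, Theorem \ref{intro:amen-char} supplies $\beta \in \Delta G$ with $\|\beta g_i - \beta\|_{\AE, d} < \eps/k$ for each $i$. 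Each orbit map $h \mapsto \pi(h)u_i$ is $1$-Lipschitz from $(G, d)$ to $V$, so its linear extension to differences of convex combinations yields $\|\pi(\beta)(u_i - \pi(g_i)u_i)\| \leqslant \|\beta g_i - \beta\|_{\AE, d} < \eps/k$, and summing over $i$ gives $\|\pi(\beta)w\| < \eps$.

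For $\alpha$-invariance of $V_G$, write $\alpha(g)v = \pi(g)v + b(g)$ with cocycle identity $b(hg) = \pi(h)b(g) + b(h)$. Since $\pi(g)v = v - (v - \pi(g)v)$, one has $\pi(g)V_G \subseteq V_G$ automatically, and invariance reduces to showing $b(g) \in V_G$ for every $g$. For this, I employ the continuous left-invariant \'ecart $d(g', h') = \|b((g')^{-1}h')\|$ on $G$ (symmetry and the triangle inequality follow from the cocycle identity together with $b(g^{-1}) = -\pi(g^{-1})b(g)$); the cocycle itself is then a $1$-Lipschitz map $(G, d) \to V$. Applying Theorem \ref{intro:amen-char} to $E = \{e, g\}$ and using the cocycle identity to rewrite $\sum_h \beta(h)(b(hg) - b(h))$ as $\pi(\beta)b(g)$ produces $\|\pi(\beta)b(g)\| < \eps$, whence $b(g) \in U$. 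The main obstacle throughout is the inclusion $W \subseteq U$, and it is precisely the simultaneous averaging delivered by Theorem \ref{intro:amen-char}---controlling several orbit maps with a single $\beta$---that resolves it.
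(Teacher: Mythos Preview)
Your argument is correct, but it follows a genuinely different route from the paper's.

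For the equality $V_G=\{v\in V\mid 0\in\ov{\pi(\Delta G)v}\}$, the paper works directly with the mean operator $V\overset{M}\longrightarrow V^{**}$ associated to a $\rho$-invariant mean on ${\sf LUCB}(G)$ (Proposition~\ref{prop:meanoper} and Theorem~\ref{thm:alaoglu}). One shows $\ker M=V_G$ from the invariance $M\pi(g)=M$, and the estimate $\norm{Mv}=\inf_{\beta\in\Delta G}\norm{\pi(\beta)v}$ then gives the identification with your set $U$ immediately. The inclusion $U\subseteq W$ is done even more simply than via Hahn--Banach: if $\pi(\beta_n)v\to 0$, then $v=\lim_n\big(v-\pi(\beta_n)v\big)\in V_G$. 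For the $\alpha$-invariance, the paper observes (Lemmas~\ref{operator Q} and~\ref{V_G invariant}) that for any $\phi\in(V_G)^\perp=(V^*)^G$ the composition $\phi\circ b$ is a continuous homomorphism $G\to\R$, hence vanishes by hypothesis, so $b(g)\in(V_G)^{\perp\perp}=V_G$ for all $g$.

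Your approach instead reduces everything to Theorem~\ref{intro:amen-char}, first with the orbit \'ecart $d(g,h)=\max_i\norm{\pi(g)u_i-\pi(h)u_i}$ to get $W\subseteq U$, and then with the cocycle \'ecart $d(g,h)=\norm{b(g^{-1}h)}$ to get $b(g)\in U$. This is valid and rather elegant: it exhibits Theorem~\ref{intro:amen-char} as a universal statement from which the Banach-module facts can be recovered by choosing suitable \'ecarts. The trade-off is that, in the paper's logical order, Theorem~\ref{intro:amen-char} is itself derived from Theorem~\ref{thm:alaoglu} applied to $\AE(G/d)$, so your route passes through the mean-operator machinery indirectly rather than invoking it on $V$ outright; the paper's argument is shorter and more self-contained, while yours highlights the strength of the optimal-transport reformulation.
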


Whereas this does not itself establish any fixed points, it does suffice to prove that the convex $\alpha$-invariant subsets of $V_G$ are almost directed under inclusion.

\begin{cor}
For any finite collection $C_1,C_2,\ldots, C_n$ of non-empty convex $\alpha$-invariant subsets of $V_G$ and any $\eps>0$, there is some $v\in V_G$ so that 
$$
\max_{i=1}^n\;{\sf dist}(v,C_i)<\eps.
$$
\end{cor}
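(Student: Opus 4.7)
My plan is as follows. Fix points $v_i\in C_i$ for $i=1,\ldots,n$. The idea is to exhibit a single $\beta\in\Delta G$ such that $v:=\alpha(\beta)v_1$ is simultaneously close to all the $C_j$. Since each $C_j$ is convex and $\alpha$-invariant, $\alpha(\beta)v_j$ still lies in $C_j$, and using the cocycle decomposition $\alpha(g)w=\pi(g)w+b(g)$ to cancel the translation part, one computes
\[
{\sf dist}(v,C_j)\leqslant\Norm{\alpha(\beta)v_1-\alpha(\beta)v_j}=\Norm{\pi(\beta)(v_1-v_j)}.
\]
Hence it is enough to find one $\beta\in\Delta G$ with $\norm{\pi(\beta)(v_1-v_j)}<\eps$ for every $j\geqslant 2$; the associated $v$ will automatically belong to $C_1\subseteq V_G$, giving ${\sf dist}(v,C_1)=0$.

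Since $V_G$ is a linear subspace, each difference $v_1-v_j$ lies in $V_G$, and the second description in Theorem \ref{intro:affine} yields $0\in\ov{\pi(\Delta G)(v_1-v_j)}$. This handles each $j$ \emph{individually}. To collapse the choices into a single element of $\Delta G$, I would iterate: first choose $\beta_2\in\Delta G$ with $\norm{\pi(\beta_2)(v_1-v_2)}<\eps$. One then checks that $V_G$ is $\pi$-invariant (directly from $\pi(h)(w-\pi(g)w)=\pi(h)w-\pi(hg)w$), so $\pi(\beta_2)(v_1-v_3)\in V_G$, and one may pick $\beta_3\in\Delta G$ with $\norm{\pi(\beta_3)\pi(\beta_2)(v_1-v_3)}<\eps$. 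Because $\pi(\beta_3)$ is a convex combination of isometries, it is a contraction, so the bound already attained for $v_1-v_2$ is preserved under further multiplication. Proceeding inductively and exploiting that $\Delta G$ is closed under multiplication, the element $\beta:=\beta_n\beta_{n-1}\cdots\beta_2\in\Delta G$ satisfies $\norm{\pi(\beta)(v_1-v_j)}<\eps$ for all $j\geqslant 2$ simultaneously.

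Combining the two steps, $v=\alpha(\beta)v_1\in C_1$ and ${\sf dist}(v,C_j)<\eps$ for every $j\geqslant 2$, proving the corollary.

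The only real point of friction is the passage from the pointwise assertion ``$0\in\ov{\pi(\Delta G)w}$ for each $w\in V_G$'' furnished by Theorem \ref{intro:affine} to a uniform simultaneous approximation over the finitely many vectors $v_1-v_j$. The finite induction above resolves this using nothing beyond the $\pi$-invariance of $V_G$, the semigroup structure of $\Delta G$, and the contractivity of the operators $\pi(\beta_k)$; no further amenability or cohomological input is required, so the hypotheses of Theorem \ref{intro:affine} suffice.
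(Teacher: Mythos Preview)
Your proof is correct and follows essentially the same route as the paper: pick representatives $v_i\in C_i$, use amenability to find a single $\beta\in\Delta G$ shrinking the relevant finite family of vectors in $V_G$, and then exploit the cocycle decomposition $\alpha(\beta)=\pi(\beta)+b(\beta)$ together with convexity and $\alpha$-invariance of the $C_i$. The only cosmetic difference is that the paper shrinks the vectors $v_i$ themselves and takes $v=b(\beta)$, whereas you shrink the differences $v_1-v_j$ and take $v=\alpha(\beta)v_1\in C_1$; your choice has the minor bonus that $v\in V_G$ comes for free from $C_1\subseteq V_G$, and your iteration step is exactly the argument already packaged in Theorem~\ref{thm:alaoglu}.
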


The organisation of the paper is as follows. In Sections \ref{sec:algebra}, \ref{sec:Lipschitz} and \ref{sec:KR}, we fix the notation regarding group algebras, amenability, Arens--Eells and Lipschitz spaces, Kantorovich duality and provide a summary of the background material needed later on. In Sections \ref{mean-operators}, \ref{decompo} and \ref{sec:skew}, we provide some general results for mean operators associated with linear actions of amenable and skew-amenable groups.  Section \ref{sec:affine} focuses on approximate fixed points and cohomology of affine isometric actions of such groups, whereas, in Section \ref{isom}, we focus on isometric actions on metric spaces and present the fundamental lemmas needed to handle unbounded orbits. In Sections \ref{amen gps} and \ref{LCSC}, this is put into play in order to establish  the main results. 

\textsc{Acknowledgement:} The author is grateful for interesting feedback from M. C\'uth, N. Monod and F. M. Schneider and for multiple interesting conversations with  T. Tsankov on the topic of the paper.


\section{Group algebra, Banach modules and amenability}\label{sec:algebra}
If $X$ is any set, we let $\R X$ be the free vector space over $X$, that is, the set of finitely supported functions $\xi \colon X\to \R$ or, equivalently, formal linear combinations
$$
\xi=\sum_{i=1}^na_ix_i
$$
where $x_i\in X$ and $a_i\in \R$.
We also let $\M X$ be the hyperplane in $\R X$ consisting of all $\xi\in \R X$ of {\em mean $0$}, that is,  such that $\sum_{x\in X}\xi(x)=0$, and note that 
$$
\M X={\sf span}\{x-y\del x,y\in X\}.
$$
Finally, let $\Delta X$ be the {\em standard simplex} in $\R X$, i.e., the collection of all finite convex combinations $\beta=\sum_{i=1}^n\lambda_ix_i$ of points $x_i\in X$.

Note that, when $G$ is a group, elements of $\R G$ multiply as follows
$$
\Big(\sum_{i=1}^na_ig_i\Big)
\cdot
\Big(\sum_{j=1}^mb_jf_j\Big)
=\sum_{i,j}a_ib_j{g_if_j}.
$$
In other words, viewing $\xi=\sum_{i=1}^na_ig_i$ and $\zeta=\sum_{j=1}^mb_jf_j$ as finitely supported functions $G\overset{\xi,\zeta}\longrightarrow \R$, the product $\xi\cdot \zeta$ is just the convolution $\xi\ast \zeta$ defined by
$$
(\xi\ast \zeta)(h)=\sum_{g\in G}\xi(g)\zeta(g\inv h).
$$
The resulting algebra $\R G$ is called the {\em group algebra} of $G$. Note also that the simplex $\Delta G$ is a subsemigroup of the semigroup $(\R G, *)$.

If $G$ is a topological group, a {\em Banach $G$-module} is a pair $(V,\pi)$, where $V$ is a Banach space and $G\overset\pi\curvearrowright V$ is a continuous linear action of $G$ on $V$. Alternatively, we may view $\pi$ as a group representation $G\overset\pi\longrightarrow \ku L(V)$  of $G$ in the algebra $\ku L(V)$ of bounded linear operators on $V$ such that $\norm{\pi(\cdot)}$ is bounded on a neighbourhood of the identity in $G$ and such that $g\mapsto \pi(g)v$ is continuous for every $v\in V$ (i.e., the representation is continuous with respect to the strong operator topology on $\ku L(V)$). In case every $\pi(g)$ is an isometry of $V$, we say that $(V,\pi)$ is an {\em isometric} Banach $G$-module. Observe also that the group representation extends uniquely to a representation $\R G\overset\pi\longrightarrow \ku L(V)$ of the group algebra. 

If $G\overset\pi\curvearrowright V$ is an isometric linear action, we define the {\em contragredient} or {\em dual action} of $G\overset{\pi^*}\curvearrowright V^*$ simply by $\pi^*(g)\phi=\phi\circ \pi(g\inv)$, that is, $\pi^*(g)=\pi(g\inv)^*$.  Even when the action $G\overset\pi\curvearrowright V$ is continuous, $(V^*,\pi^*)$ will not, in general, be a Banach $G$-module because the action may no longer be continuous with respect to the norm topology on $V^*$. So one must exercise some care when dealing with these dual actions.

If $G$ is a topological group, then $G$ acts by isometric automorphisms  on the Banach algebra $\ell^\infty(G)$ of bounded real-valued functions on $G$ by
$$
\rho(g)(\phi)=\phi(\,\cdot\, g)
$$
and similarly by
$$
\lambda(g)(\phi)=\phi(g\inv \,\cdot\,).
$$
Both of these action fail, in general, to be continuous, so one may restrict the attention to the set ${\sf LUCB}(G)$ of elements $\phi\in \ell^\infty(G)$ such that  the evaluation map
$$
g\in G\mapsto \rho(g)\phi\in \ell^\infty(G)
$$
is continuous. Because the action $G\overset{\rho}\curvearrowright \ell^\infty(G)$ is by isometric algebra automorphisms, we note that ${\sf LUCB}(G)$ is a closed $\rho$-invariant subalgebra of $\ell^\infty(G)$ and that the action $G\overset\rho\curvearrowright {\sf LUCB}(G)$ is continuous. The elements of ${\sf LUCB}(G)$ are the {\em bounded left-uniformly continuous} functions on $G$ and may equivalently be described as those $\phi\in \ell^\infty(G)$ such that, for every $\eps>0$, there is some identity neighbourhood $V\subseteq G$ such that
$$
\sup_{g\in G}\;\sup_{v\in V}\;\norm{\phi(gv)-\phi(g)}<\eps.
$$
Because the actions $\rho$ and $\lambda$ commute, one also notes that ${\sf LUCB}(G)$ is $\lambda$-invariant, though the $\lambda$-action on ${\sf LUCB}(G)$ is not, in general, continuous.

Similarly, by ${\sf RUCB}(G)$ we denote the collection of all $\phi\in \ell^\infty(G)$ such that
$$
g\in G\mapsto \lambda(g)\phi\in \ell^\infty(G)
$$
is continuous or, equivalently, such that, for every $\eps>0$, there is some identity neighbourhood $V\subseteq G$ such that
$$
\sup_{g\in G}\;\sup_{v\in V}\;\norm{\phi(vg)-\phi(g)}<\eps.
$$
Again, ${\sf RUCB}(G)$ is a $\lambda$ and $\rho$-invariant closed subalgebra of $\ell^\infty(G)$ and the $\lambda$-action is continuous.

Finally, the group $G$ is called {\em amenable} if every continuous affine action of $G$ on a compact convex subset of a locally convex topological vector space has a fixed point. By a result of N. W. Rickert (Theorem 4.2 \cite{rickert}),  $G$ is amenable if and only if there is a $\rho$-invariant mean on ${\sf LUCB}(G)$, that is, a positive linear functional $\go m\colon {\sf LUCB}(G)\to \R$ such that $\norm{\go m}=1$, $\go m({\bf 1})=1$ and $\go m\big(\rho(g)\phi)=\go m(\phi)$ for all $g\in G$ and $\phi\in {\sf LUCB}(G)$. 

On the other hand, $G$ will be said to be {\em skew-amenable} if there is a $\lambda$-invariant mean on ${\sf LUCB}(G)$. We shall return to this concept in  Section \ref{sec:skew}.


\section{Arens--Eells spaces and their duals }\label{sec:Lipschitz}
Assume henceforth that  $(X,d)$ is a metric space. We equip $\M X$ with the {\em Arens--Eells} or {\em transportation cost norm}\footnote{
As we shall see below, there are two simultaneous competing sources of this norm. One is due to R. Arens and J. Eells \cite{arens}, which tends to focus on the metric and Banach space theoretical properties and which, in conjunction with the paper by K. de Leuw \cite{deleuw}, seem to have mainly led to the  Banach space theoretical study of spaces of Lipschitz functions, see N. Weaver \cite{weaver}.  The other source are the foundational papers by L. V. Kantorovich and G. Sh. Rubinstein \cite{kantorovich1,kantorovich2} that instead have mainly influenced the modern theory of optimal transport (see L. V. Kantorovich and G. P. Akilov \cite{kantorovich3} and C. Villani \cite{villani}).}

\begin{equation}
\norm{\xi}_{\AE}=\inf\Big( \sum_{i=1}^n|a_i|d(x_i,y_i)\del \xi=\sum_{i=1}^na_i({x_i-y_i})\Big).
\end{equation}
Note that the norm $\norm{\xi}_{\AE}$ can be seen as a measure of the cost of an optimal transport between the {\em sources} or {\em manufacturers}
$$
\{x\in X\mid \xi(x)>0\}
$$
and {\em sinks} or {\em consumers}
$$
\{x\in X\mid \xi(x)<0\},
$$
where the cost of transporting {\em mass} $a$ from $x$ to $y$, represented by the term $a(x-y)$, is just $|a|d(x,y)$.

Observe that by replacing summands $a_i(x_i-y_i)$ by $-a_i(y_i-x_i)$, we may suppose that all  coefficients $a_i$ are non-negative. Also, a simple argument using the triangle inequality for $d$ (by cutting out the {\em middle man}), shows that we may suppose that no point appears both as a source, i.e., among the $x_i$, and as a sink, i.e., among the $y_i$.  It thus follows that
\begin{equation}\label{AE2}
\norm{\xi}_{\AE}=\inf\Big( \sum_{i=1}^na_id(x_i,y_i)\;\Big|\; \xi=\sum_{i=1}^na_i({x_i-y_i}), \;\; a_i>0, \;\; \xi(y_i)<0<\xi(x_i)\Big).
\end{equation}

We now let $\AE X$ be the Banach space completion of $\M X$ with respect to the Arens--Eells norm $\norm\cdot_{\AE}$. Let also ${\sf Lip}\, X$ denote the vector space of real-valued Lipschitz functions on $X$ equipped with the seminorm 
$$
L(\phi)=\sup_{x\neq y}\frac{|\phi x-\phi y|}{d(x,y)}
$$
that measures the optimal Lipschitz constant for $\phi$. 

Observe that every real-valued function $\phi$ on $X$ extends uniquely to a linear map $\hat \phi\colon \R X\to \R$ and thus, in particular, defines a linear functional on $\M X$. Furthermore, for all $\xi=\sum_{i=1}^na_i(x_i-y_i)\in \M X$,
\maths{
\big|\hat\phi(\xi)\big|
&= \Big|\sum_{i=1}^na_i( \phi x_i- \phi y_i)\Big|\\
&\leqslant \sum_{i=1}^n|a_i|\big| \phi x_i- \phi y_i\big|\\
&\leqslant L(\phi)\sum_{i=1}^n|a_i|d(x_i,y_i)
}
and so $\big|\hat\phi(\xi)\big|\leqslant L(\phi)\norm\xi_{\AE}$, showing that $\norm {\hat\phi}_{\AE}\leqslant L(\phi)$. Conversely, suppose  $\psi\in \M X^*$ and let $e\in X$ be any point. Define a real-valued Lipschitz map $\phi$ on $X$ by $\phi x=\psi(x-e)$ and note that $\psi$ and $\hat \phi$ agree on $\M X$. Moreover, as clearly $\norm{x-y}_{\AE}\leqslant d(x,y)$, we find that
$$
L(\phi)
=\sup_{x\neq y}\frac{|\phi x-\phi y|}{d(x,y)}
=\sup_{x\neq y}\frac{|\psi(x-y)|}{d(x,y)}
\leqslant \sup_{x\neq y} \frac{\norm{x-y}_{\AE}\cdot\norm{\psi}_{\AE}}{d(x,y)}
\leqslant \norm{\psi}_{\AE}.
$$
Thus, $\phi\mapsto \hat \phi$ is a linear operator from ${\sf Lip}\, X$ to $\M X^*$ such that $\norm{\hat \phi}_{\AE}\leqslant L(\phi)$. Furthermore, every $\psi\in \M X^*$ can be written as $\psi=\hat\phi$ where $L(\phi)\leqslant \norm{\psi}_{\AE}$. Note also that $\norm{\hat \phi}_{\AE}=0$ only if $\phi$ is constant. It therefore follows that  $\phi\mapsto \hat \phi$ defines a surjective linear isometry from the Banach space
$$
\go {Lip}\,X={\sf Lip}\, X/\{\text{constant functions}\}
$$
onto  the dual space $\M X^*=\AE X^*$. Henceforth, we will simply identify $\AE X^*$ with $\go {Lip}\,X$ and also identify the elements of $\go {Lip}\,X$ with their representatives in ${\sf Lip}\, X$. 

Note also that, because $\phi(\xi)=\sum_{x\in X}\xi(x)\phi(x)$, the norm on $\M X$ may alternatively be computed by
\begin{equation}\label{AE}
\norm{\xi}_{\AE}=\sup\big( \sum_{x\in X}\xi(x)\phi(x)\del \phi\colon X\to \R \text{ is $1$-Lipschitz }\big).
\end{equation}
The formula (\ref{AE}) is oftentimes called {\em Kantorovich duality}.

There is an instructive and well-known interpretation of this duality. Namely, suppose a bicycle manufacturer V\'elo Sportif operates several factories. To cover their energy needs, V\'elo Sportif has invested in a few offshore windmills whose total electricity output exactly matches the consumption at their factories. Their electricity production and consumption can then be represented by $\xi \in \M X$, where $X$ is the space of geographical locations. Transporting 1 MWh of electricity from location $x$ to $y$ nevertheless has a cost of $d(x,y)$, where $d$ can be assumed to be a metric on $X$. The goal of V\'elo Sportif is thus to {\em minimise the total cost}  by finding an optimal transportation cost plan
$$
\xi=\sum_{i=1}^na_i(x_i-y_i),
$$
that is, such that $\norm\xi_{\AE}=\sum_{i=1}^n|a_i|d(x_i,y_i)$.

A utility company \'Electricit\'e  Nucl\'eaire operating nuclear power plants however sees an opportunity for partnership. They offer V\'elo Sportif to purchase their entire windmill electricity production at a fixed market price of $\phi(x)$ per MWh dependent on the location $x\in X$ of the windmills (perhaps to provide to \'Electricit\'e  Nucl\'eaire's own consumers) and, on the other hand, sell back electricity from their nuclear power plants to V\'elo Sportif at the location $y\in X$ of their factories for the same market price $\phi(y)$. In this way, V\'elo Sportif will not pay any transportation cost. Of course, for this to make economical sense for V\'elo Sportif,  \'Electricit\'e  Nucl\'eaire guarantees that any cost differential $\phi(y)-\phi(x)$  does not exceed the transportation cost $d(x,y)$.
The total income of \'Electricit\'e  Nucl\'eaire from this partnership will then be 
$$
\text{Income}(\phi)=-\sum_{x\in X}\xi(x)\phi(x).
$$
The economists at \'Electricit\'e  Nucl\'eaire now aim to {\em optimise the market price} $\phi$ subject to the constraints
$$
\phi(y)-\phi(x)\leqslant d(x,y), \quad x,y\in X
$$
so as to maximise their income. Observe that the net savings of V\'elo Sportif from this partnership will be 
$$
\text{Savings}(\phi)=\norm\xi_{\AE}-\text{Income}(\phi).
$$

However, by Kantorovich--Rubenstein duality, \'Electricit\'e  Nucl\'eaire may invoke a $1$-Lipschitz function $\phi$ realising the supremum in (\ref{AE}), which means that the price constraints are satisfied, but nevertheless $\text{Savings}(\phi)=0$.

We note a well-known fact about integral consumption schemes.

\begin{lemme}\label{konig}
Let $x_1,\ldots, x_n$ and $y_1,\ldots, y_n$ be points  in a metric space $(X,d)$. Then
$$
\NORM{\sum_{i=1}^n{x_i}-\sum_{i=1}^n{y_i}}_{\AE}=\sum_{i=1}^nd(x_i,y_{\sigma(i)})
$$
for some permutation $\sigma\in {\sf Sym}(n)$.
\end{lemme}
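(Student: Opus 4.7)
The plan is to interpret $\NORM{\sum_i x_i - \sum_i y_i}_{\AE}$ as the minimum transport cost between two uniform discrete measures of equal total mass, and then to invoke the Birkhoff--von Neumann theorem to replace this transport cost by the cost of an optimal pure matching. The trivial direction comes from the representations $\sum_i (x_i - y_{\sigma(i)})$ for $\sigma\in{\sf Sym}(n)$, yielding $\NORM{\sum_i x_i - \sum_i y_i}_{\AE} \le \sum_i d(x_i, y_{\sigma(i)})$ for every $\sigma$, so it suffices to produce some $\sigma$ achieving equality.

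The key step is to argue that any representation $\xi = \sum_k a_k(p_k - q_k)$ with $a_k > 0$ can, without increasing its cost $\sum_k a_k d(p_k, q_k)$, be reshaped so that every $p_k$ lies in $\{x_1,\ldots,x_n\}$ and every $q_k$ lies in $\{y_1,\ldots,y_n\}$. The mechanism is the ``middle-man'' reduction already mentioned in the paper just before equation (\ref{AE2}): whenever some point $z$ appears as a source $p_k$ but is not among the $x_i$, or as a sink $q_k$ but is not among the $y_j$, the coefficient of $z$ in $\xi$ forces a matching amount of in- and out-flow through $z$, and that circulating mass can be rerouted directly by applying $d(p,q) \leq d(p,z) + d(z,q)$ without increasing the total cost. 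Iterating this procedure and collecting the surviving arrows, the weights $\pi_{ij} := \sum_{k:\, p_k = x_i,\, q_k = y_j} a_k$ form a doubly stochastic $n\times n$ matrix with $\sum_{i,j}\pi_{ij}d(x_i,y_j) \leq \sum_k a_k d(p_k,q_k)$.

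Consequently $\NORM{\sum_i x_i - \sum_i y_i}_{\AE}$ equals the minimum of the linear functional $\pi \mapsto \sum_{i,j}\pi_{ij}d(x_i,y_j)$ over the Birkhoff polytope of $n\times n$ doubly stochastic matrices. Since a linear functional attains its minimum on a compact convex polytope at an extreme point, and the Birkhoff--von Neumann theorem identifies these extreme points as the $n!$ permutation matrices, the optimum is realised by some $\sigma\in{\sf Sym}(n)$, giving the desired equality. The main technical delicacy is the bookkeeping in the middle-man reduction, particularly when the $x_i$ or $y_j$ have coincidences; the rest is a standard appeal to convex geometry.
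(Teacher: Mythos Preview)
Your proposal is correct and follows essentially the same approach as the paper: both reduce the Arens--Eells norm to the minimum of the linear functional $[a_{ij}]\mapsto\sum_{i,j}a_{ij}d(x_i,y_j)$ over the Birkhoff polytope of doubly stochastic matrices, and then invoke Birkhoff--von Neumann to conclude that the minimum is attained at a permutation matrix. The only difference is cosmetic: the paper simply asserts this reduction (implicitly relying on the middle-man argument preceding equation~(\ref{AE2})), whereas you spell out the mechanism explicitly and flag the bookkeeping needed when the $x_i$ or $y_j$ have coincidences.
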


\begin{proof}
Let  $C$ be the compact convex subset of $\R^{n\times n}$ consisting of all {\em doubly stochastic matrices} $[a_{ij}] \in \R^{n\times n}$, i.e., satisfying 
$$
\sum_{i=1}^na_{ik}=\sum_{i=1}^na_{ki}=1
$$
for all $k$ and $0\leqslant a_{ij}\leqslant 1$. By the Birkhoff--von Neumann theorem (due to D. K\H{o}nig \cite{konig}), the extreme points of $C$ are the permutation matrices, that is, with entries $0$ and $1$.
Observe that the norm $\Norm{\sum_{i=1}^n{x_i}-\sum_{i=1}^n{y_i}}_{\AE}$ is the infimum of the  linear map $\nu \colon \R^{n\times n}\to \R$, given by
$$
\nu\big([a_{ij}]\big)=\sum_{i,j=1}^na_{ij}d(x_i,y_j),
$$
over the convex set $C$.  Therefore, the infimum must be attained at an extreme point  of $C$, i.e., at a $\{0,1\}$-valued matrix $[a_{ij}]$. 
Letting $\sigma(i)=j$ when $a_{ij}=1$, we see that $\sigma\in {\sf Sym}(n)$ and that $\Norm{\sum_{i=1}^n{x_i}-\sum_{i=1}^n{y_i}}_{\AE}=\sum_{i=1}^nd(x_i,y_{\sigma(i)})$.
\end{proof}


\section{The Kantorovich--Rubinstein norm}\label{sec:KR}
The setup of Section \ref{sec:Lipschitz} is well-suited to the case when one deals with atomic signed measures on metric spaces. However, eventually we shall consider applications to locally compact groups, for which another approach is warranted. 

In the following, if $X$ is a Polish space, we let $M(X)$ denote the vector space of finite signed Borel measures and $M^+(X)$ the cone of finite positive  Borel measures  on $X$. Let also $M_0(X)$ be the hyperplane of signed measures of mean $0$, that is,
$$
M_0(X)=\{\gamma\in M(X)\del \gamma(X)=0\}.
$$
Observe that, by the Hahn--Jordan decomposition theorem, these are exactly the signed measures $\eta$ that can be written as differences $\eta=\mu-\nu$ of measures $\mu, \nu\in M^+(X)$ with $\mu(X)=\nu(X)$. 

Suppose now that $X$ is a Polish space equipped with a compatible metric $d$. Thus, whereas $d$ induces the topology on $X$, we do not necessarily assume that $X$ is complete with respect to $d$.  Let also $M^+_d(X)$ denote the set of all $\mu\in M^+(X)$ for which
$$
\int_X d(x,y)\, d\mu(x)<\infty
$$
for all or, equivalently, for some $y\in X$ and set 
$$
{\sf KR}(X)=\Mgd{\mu-\nu}{\mu, \nu\in M^+_d(X)\;\;\&\;\; \mu(X)=\nu(X)}.
$$
Evidently, ${\sf KR}(X)$ is a linear subspace of $M_0(X)$.

\begin{defi}
We define the {\em Kantorovich--Rubinstein norm} of a signed measure $\gamma\in {\sf KR}(X)$ to be the quantity
$$
\norm\gamma_{\sf KR}=\sup_{L(\phi)\leqslant 1}\;\int_X\phi\, d\gamma,
$$
where the supremum is taken over all $1$-Lipschitz functions $\phi\colon X\to R$.
\end{defi}
Evidently $\norm\cdot_{\sf KR}$  defines a seminorm on the vector space ${\sf KR}(X)$. 
Remark also that, if we view an element $\xi \in \M X$ as a finitely supported signed measure on $X$, then $\M X\subseteq {\sf KR}(X)$. Furthermore, by Kantorovich duality, that is, Equation \ref{AE}, we have that
$$
\norm\xi_{\sf KR}=\sup_{L(\phi)\leqslant 1}\;\int_X\phi\, d\xi=\sup_{L(\phi)\leqslant 1}\;\sum_{x\in X}\xi(x)\phi(x)= \norm{\xi}_{\AE}
$$
for all $\xi \in \M X$, so the inclusion $\M X\subseteq {\sf KR}(X)$ is isometric. 

Recall that, if $\mu, \nu \in M^+(X)$ are two positive measures with $\mu(X)=\nu(X)$, a {\em coupling} of $\mu$ and $\nu$ is an element $\eta\in M^+(X^2)$,  such that
$$
\mu=\eta_1, \quad \nu =\eta_2
$$
where $\eta_1,\eta_2\in M^+(X)$ denote the marginals of $\eta$, that is, $\eta_1(A)=\eta(A\times X)$ and $\eta_2(B)=\eta(X\times B)$ for all Borel sets $A,B$. For example, when $\mu\neq 0$, one may take $\eta=\frac1{\mu(X)}(\mu\times \nu)$. Let $\Pi(\mu,\nu)$ denote the convex set of all such couplings.

We now have a version of Kantorovich duality in this setting.

\begin{thm}[Kantorovich duality]
Suppose $X$ is a Polish space with a compatible metric $d$ and let $\mu,\nu\in M^+_d(X)$ with $\mu(X)=\nu(X)$. Then
$$
\norm{\mu-\nu}_{\sf KR}=\inf_{\eta\in \Pi(\mu,\nu)}\int_{X^2}d(x,y)\,d\eta(x,y).
$$
\end{thm}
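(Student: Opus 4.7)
The plan is to prove the two inequalities separately. The $\leqslant$ direction is a direct computation, while for the reverse I would approximate $\mu,\nu$ by finitely supported measures (to which the atomic Kantorovich duality already recorded in Section \ref{sec:Lipschitz} applies) and then extract a limit coupling via a tightness argument.

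For the easy direction, fix any $1$-Lipschitz $\phi\colon X\to\R$ and any $\eta\in \Pi(\mu,\nu)$. Using the marginals of $\eta$,
$$
\int_X\phi\,d(\mu-\nu)=\int_{X^2}\!\bigl(\phi(x)-\phi(y)\bigr)\,d\eta(x,y)\leqslant \int_{X^2}\!d(x,y)\,d\eta(x,y),
$$
so taking the supremum over $\phi$ and the infimum over $\eta$ yields $\norm{\mu-\nu}_{\sf KR}\leqslant \inf_\eta\int d\,d\eta$.

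For the reverse direction, fix $\eps>0$ and a basepoint $x_0\in X$. Since $\mu,\nu\in M_d^+(X)$, I can choose a compact $K\subseteq X$ with $\int_{X\setminus K}\!d(x,x_0)\,d\mu$ and $\int_{X\setminus K}\!d(x,x_0)\,d\nu$ both less than $\eps$, partition $K$ into finitely many Borel pieces $A_1,\ldots,A_N$ of diameter at most $\eps$, pick $z_i\in A_i$, and set
$$
\mu_\eps=\sum_{i=1}^N\mu(A_i)\,\delta_{z_i}+\mu(X\setminus K)\,\delta_{x_0},\qquad \nu_\eps=\sum_{i=1}^N\nu(A_i)\,\delta_{z_i}+\nu(X\setminus K)\,\delta_{x_0}.
$$
These atomic measures lie in $M_d^+(X)$ and have the same total mass as $\mu,\nu$. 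Via the easy direction just proved, the naive transports from $\mu|_{A_i}$ to $\mu(A_i)\delta_{z_i}$ and from the tail to $\delta_{x_0}$ show that $\norm{\mu-\mu_\eps}_{\sf KR}$ and $\norm{\nu-\nu_\eps}_{\sf KR}$ are each $O(\eps)$. Since $\mu_\eps-\nu_\eps$ is an atomic element of $\M X\subseteq {\sf KR}(X)$, Equation (\ref{AE2}) provides an optimal nonnegative decomposition realising $\norm{\mu_\eps-\nu_\eps}_{\AE}=\norm{\mu_\eps-\nu_\eps}_{\sf KR}$ (a finite-dimensional linear program, so the infimum is attained); this decomposition is exactly a coupling $\eta_\eps\in\Pi(\mu_\eps,\nu_\eps)$ with $\int d\,d\eta_\eps=\norm{\mu_\eps-\nu_\eps}_{\sf KR}$.

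Finally I would pass to the limit as $\eps\to 0$. Convergence in $\sf KR$-norm implies narrow convergence, so $\{\mu_\eps\}$ and $\{\nu_\eps\}$ are tight; consequently the couplings $\{\eta_\eps\}$ are tight on $X^2$, and by Prokhorov's theorem some subsequence $\eta_{\eps_n}$ converges narrowly to a measure $\eta$, which necessarily has marginals $\mu,\nu$ and thus lies in $\Pi(\mu,\nu)$. Since $d$ is nonnegative and continuous, $\eta\mapsto\int d\,d\eta$ is narrowly lower semicontinuous, giving
$$
\int d\,d\eta\leqslant \liminf_n\int d\,d\eta_{\eps_n}=\liminf_n\norm{\mu_{\eps_n}-\nu_{\eps_n}}_{\sf KR}=\norm{\mu-\nu}_{\sf KR},
$$
completing the proof. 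The main obstacle is the unboundedness of $d$: the hypothesis $\mu,\nu\in M_d^+(X)$ is precisely what makes the tail transports $O(\eps)$, hence the atomic approximation $\sf KR$-continuous and the limiting cost controlled; without this integrability, both the continuity of $\norm\cdot_{\sf KR}$ under atomic approximation and the passage to a limit cost would break down.
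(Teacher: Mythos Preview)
Your argument is essentially correct, but note that the paper does not actually prove this theorem: immediately after the statement it observes that by homogeneity one may reduce to the case of probability measures and then invokes Theorem~5.10 and Particular Case~5.16 of Villani~\cite{villani}. What you have written is instead a self-contained proof via discrete approximation and Prokhorov compactness, which is one of the standard routes to Kantorovich duality and is close in spirit to what the cited reference ultimately does. The trade-off is the obvious one: the paper keeps the exposition short by outsourcing a result peripheral to its aims, whereas your approach would make the section self-contained at the cost of a page of analysis.

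One small point deserves tightening. The optimal decomposition furnished by Equation~(\ref{AE2}) is a coupling of the \emph{positive and negative parts} of $\mu_\eps-\nu_\eps$, not of $\mu_\eps$ and $\nu_\eps$ themselves, since by construction these two discrete measures share mass at the common atoms $z_i$ and at $x_0$. The fix is immediate: writing $\gamma=\mu_\eps\wedge\nu_\eps$ for the common part, augment the optimal transport by the diagonal push-forward $({\sf id},{\sf id})_*\gamma$, which contributes nothing to the cost and yields a genuine coupling $\eta_\eps\in\Pi(\mu_\eps,\nu_\eps)$ with $\int d\,d\eta_\eps=\norm{\mu_\eps-\nu_\eps}_{\sf KR}$. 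With that adjustment the remainder of your argument goes through as written.
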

Indeed, by homogeneity, it suffices to consider the case when $\mu$ and $\nu$ are probability measures on $X$. In this case, the equality follows from  Theorem 5.10 and Particular Case 5.16  in \cite{villani}.

Observe one important consequence of this duality, namely, whereas the right hand side initially appears to depend on the exact pair $(\mu,\nu)$ of measures, the left hand side only depends on their difference $\mu-\nu$. 

When $\mu$ and $\nu$ are probability measures,  the expression 
\begin{equation}\label{Wasserstein}
{\sf W}(\mu,\nu)=\inf_{\eta\in \Pi(\mu,\nu)}\int_{X^2}d(x,y)\,d\eta(x,y)
\end{equation}
is the so-called {\em Wasserstein distance}\footnote{This is also known as the {\em Kantorovich} or {\em Earth movers distance}, see the bibliographical notes to Chapter 6 in \cite{villani}.}  between $\mu$ and $\nu$. Of course, being an extension of the Arens--Eells norm to measures, the Kantorovich--Rubinstein norm $\norm{\mu-\nu}_{\sf KR}$ and thus the Wasserstein distance ${\sf W}(\mu,\nu)$ gauges the cost of an optimal transport between the two measures viewed respectively as  distributions of resources and consumption.

The Wasserstein distance is an actual metric on the space of probability measures in $M^+_d(X)$  (see, e.g., Chapter 6 \cite{villani}) and thus the Kantorovich--Rubinstein norm is a norm and not just a seminorm. This is due to the fact that the infimum is attained in Equation \ref{Wasserstein}, i.e., that there is an optimal transport $\eta$ between $\mu$ and $\nu$ (Theorem 4.1 \cite{villani}). Thus, if  ${\sf W}(\mu,\nu)=0$, then
$$
\int_{X^2}d(x,y)\,d\eta(x,y)=0
$$
and so $\eta$ is concentrated on the diagonal $\{(x,x)\del x\in X\}$, whereby $\mu=\nu$.

\begin{lemme}
Let $X$ be a Polish space with a compatible metric $d$. Then $\M X$ is dense in ${\sf KR}(X)$ and hence
$$
\AE X=\ov{{\sf KR}(X) }^{    \norm{\cdot}_{\sf KR}   }.
$$
\end{lemme}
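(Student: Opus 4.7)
The second equality is formal given density: $\AE X$ is by definition the completion of $\M X$ in $\norm\cdot_{\AE}=\norm\cdot_{\sf KR}$, and if $\M X$ is dense in $({\sf KR}(X),\norm\cdot_{\sf KR})$, then the closure of ${\sf KR}(X)$ in its own completion coincides with the closure of $\M X$, namely $\AE X$. So the whole task is the density claim: given $\gamma\in {\sf KR}(X)$ and $\eps>0$, produce $\xi\in \M X$ with $\norm{\gamma-\xi}_{\sf KR}<\eps$.

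By definition $\gamma=\mu-\nu$ for some $\mu,\nu\in M^+_d(X)$ with $\mu(X)=\nu(X)$. If this common mass is $0$ there is nothing to prove; otherwise, rescaling by the total mass, I reduce to the situation in which $\mu$ and $\nu$ are probability measures in $M^+_d(X)$. The triangle inequality for $\norm\cdot_{\sf KR}$ together with the Kantorovich duality theorem established just above then gives
\[
\norm{\gamma-(\mu'-\nu')}_{\sf KR}\leqslant \norm{\mu-\mu'}_{\sf KR}+\norm{\nu-\nu'}_{\sf KR}={\sf W}(\mu,\mu')+{\sf W}(\nu,\nu'),
\]
for any finitely supported probability measures $\mu',\nu'$; note $\mu'-\nu'\in \M X$ since both are finitely supported positive measures with the same total mass. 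Thus it suffices to prove the auxiliary fact that any probability measure $\mu\in M^+_d(X)$ can be approximated in ${\sf W}$ by finitely supported probability measures.

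For this, fix a base point $y\in X$ and a dense sequence $(x_i)$ in $X$. Given $\delta>0$, form the Voronoi-type Borel partition $U_i=B(x_i,\delta)\setminus\bigcup_{j<i}B(x_j,\delta)$ of $X$ into sets of diameter at most $2\delta$. Because $\int d(x,y)\, d\mu(x)<\infty$, dominated convergence supplies $N$ so large that $\int_{\bigcup_{i>N}U_i}d(x,y)\, d\mu(x)<\delta$. Set
\[
\mu_{N,\delta}:=\sum_{i=1}^N\mu(U_i)\delta_{x_i}+\Bigl(\sum_{i>N}\mu(U_i)\Bigr)\delta_y,
\]
which is a finitely supported probability measure. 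The obvious coupling that transports each $U_i$ with $i\leqslant N$ to the atom at $x_i$ and collapses the tail $\bigcup_{i>N}U_i$ onto $y$ has total cost at most $2\delta+\delta$, so ${\sf W}(\mu,\mu_{N,\delta})\leqslant 3\delta$; letting $\delta\to 0$ finishes the approximation step.

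The only genuinely non-trivial point is the tail control in the last paragraph: since $d$ can be unbounded and $X$ need not be locally compact, one cannot simply cover $X$ by a single small-diameter partition. The integrability built into the definition of $M^+_d(X)$ is precisely what lets the tail be collapsed to a single far atom at bounded extra cost, and everything else is a short application of Kantorovich duality and the triangle inequality.
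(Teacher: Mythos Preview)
Your argument is correct. The reduction to approximating a single probability measure in $M^+_d(X)$ by finitely supported ones is exactly what the paper does as well, and your Voronoi-type construction with dominated convergence to control the tail is a clean way to finish. (In fact the transport cost from $U_i$ to $x_i$ is bounded by $\delta$, not $2\delta$, since $U_i\subseteq B(x_i,\delta)$; your estimate is just slightly generous.)

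The route, however, differs from the paper's. The paper proceeds in three stages: first it uses the integrability condition to replace $\mu$ by a measure of bounded support (collapsing the tail $\{d(\cdot,y)>r\}$ onto $y$); then it invokes tightness of Borel probability measures on Polish spaces to pass to a compactly supported measure; and finally it partitions this compact support into finitely many pieces of small diameter. You instead fix a countable small-diameter partition of all of $X$ from the outset, and then use dominated convergence (driven by the same integrability hypothesis) to truncate it to finitely many cells. Your approach is slightly more elementary in that it uses only separability and avoids the tightness theorem, at the price of needing the $\mu$-integrability of $d(\cdot,y)$ to do double duty (both to make the coupling cost finite and to force the tail sum to vanish). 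The paper's approach, by contrast, separates the ``unboundedness'' issue from the ``non-compactness'' issue, which some readers may find conceptually clearer.
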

Even when $(X,d)$ is an uncountable compact metric space, ${\sf KR}(X)$ is not complete for the norm (see  Proposition 2.3.2 \cite{weaver}) and thus ${\sf KR}(X)$ does not provide us with  a description of the elements of $\AE X$.

\begin{proof}
It suffices to show that every probability measure $\mu \in M^+_d(X)$ is arbitrarily close in the Wasserstein metric to a finitely supported probability measure. So let $\mu \in M^+_d(X)$ and $\eps>0$ be given. Fix also some $y\in X$. Because $\int_X d(x,y)\, d\mu(x)<\infty$, there is some diameter $r>0$ so that 
$$
\int_{\{x\in X\,|\, d(x,y)>r\}}d(x,y)\,d\mu(x)<\eps/3.
$$
Letting 
$$
\nu=\mu\begr_{\{x\in X\,|\, d(x,y)\leqslant r\}}+   { \mu( \{x\in X\,|\, d(x,y)>r\})}  \cdot  {\delta_y}  
$$
we obtain a probability measure with bounded support so that ${\sf W}(\mu,\nu)<\eps/3$. By tightness of Borel probability measures on Polish spaces (Theorem 7.1.4 \cite{dudley}), we may find a compact set $K\subseteq X$ so that $\nu(X\setminus K)<\eps/3r$. Letting $\rho=\nu\begr_K+\nu(X\setminus K)\cdot \delta_y$, we thus have a compactly supported Borel probability measure so that ${\sf W}(\nu,\rho)<\eps/3$. Pick now a partition of $K\cup \{y\}=\bigcup_{i=1}^nF_i$ into closed subsets of diameter  $<\eps/3$ and choose $x_i\in F_i$. Then
$$
\tau=\sum_{i=1}^n\rho(F_i)\cdot\delta_{x_i}
$$
is finitely supported and ${\sf W}(\rho, \tau)\leqslant \eps/3$. So ${\sf W}(\mu,\tau)<\eps$ as required.
\end{proof}

We thus have isometric inclusions of normed vector spaces
$$
\M X\subseteq {\sf KR}(X)\subseteq \AE X,
$$
where already $\M X$ is dense in $\AE X$. Moreover, the dual of these spaces is $\go{Lip}\, X$ and the duality with ${\sf KR}(X)$ is given by
$$
\langle \phi,\gamma\rangle =\int_X\phi\, d\gamma.
$$


\section{Mean operators and canonical invariant subspaces}\label{mean-operators}
Let $G$ be a group and $(V,\pi)$ is an isometric Banach $G$-module. We define the following two $G$-invariant closed linear subspaces
$$
V^G=\{v\in V\del \pi(g)v=v \text{ for all }g\in G\}
$$
and 
$$
V_G=\ov\spa\{v-\pi(g)v\del v\in V\;\&\; g\in G\}.
$$
Indeed, to see that $V_G$ is invariant, just note that, if $f,g\in G$ and $v\in V$, then
$$
\pi(f)(v-\pi(g)v)=\pi(f)v-\pi(fgf\inv)\pi(f)v\in V_G
$$
and so $V_G$ is spanned by a $G$-invariant subset and is therefore itself invariant.

\begin{lemme}\label{M13}
With respect to the dual actions of $G$ on $V^*$ and $V^{**}$, we have 
$$
(V^*)^G=(V_G)^\perp= ((V^{**})_G)_\perp
$$
and 
$$
V^G=((V^*)_G)_\perp= ((V^{**})^G)_{\perp\perp}.
$$
Furthermore, if $W$ is any $G$-invariant subspace of $V$, then either $W\subseteq V^G$ or $W\cap V_G\neq \{0\}$.
\end{lemme}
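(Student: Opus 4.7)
The five equalities reduce to standard duality computations, which I would dispatch one at a time before finishing with the dichotomy.  My first step is to verify $(V^*)^G=(V_G)^\perp$ directly: a functional $\phi\in V^*$ satisfies $\pi^*(g)\phi=\phi$ for every $g$ precisely when $\phi(v)=\phi(\pi(g\inv)v)$ for every $v$ and $g$, which says that $\phi$ annihilates every vector of the form $v-\pi(g)v$; since the annihilator is norm-closed, this is equivalent to $\phi$ vanishing on the closed span $V_G$.  The mirror identity $V^G=((V^*)_G)_\perp$ is the symmetric computation: for any $v\in V$, $g\in G$, $\phi\in V^*$ one has $(\pi^*(g)\phi-\phi)(v)=\phi(\pi(g\inv)v-v)$, so $v$ pre-annihilates every generator of $(V^*)_G$ iff $v$ is $\pi$-fixed.

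Next I would connect these with the bidual, using the canonical equivariant isometric embedding $V\inj V^{**}$ that intertwines $\pi$ with $\pi^{**}$.  Under this embedding $V_G$ sits inside $(V^{**})_G$, so $((V^{**})_G)_\perp\subseteq (V_G)^\perp=(V^*)^G$.  For the reverse inclusion, if $\phi\in (V^*)^G$, then for any $\xi\in V^{**}$ and $g\in G$ I compute
\[
\bigl(\xi-\pi^{**}(g)\xi\bigr)(\phi)=\xi(\phi)-\xi\bigl(\pi^*(g\inv)\phi\bigr)=\xi(\phi-\phi)=0,
\]
so $\phi$ pre-annihilates every generator of $(V^{**})_G$, hence its closed span.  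This establishes $(V^*)^G=((V^{**})_G)_\perp$.

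For the remaining identity $V^G=((V^{**})^G)_{\perp\perp}$, the plan is to apply the first identity one level up.  The algebraic calculation giving $(V^*)^G=(V_G)^\perp$ is purely formal and requires no continuity of $\pi^*$, so applied to the dual module it yields $(V^{**})^G=((V^*)_G)^\perp$, where the latter is the annihilator in $V^{**}$ of the closed subspace $(V^*)_G\subseteq V^*$.  By the bipolar theorem, pre-annihilating an annihilator recovers the original closed subspace, so $((V^{**})^G)_\perp=(V^*)_G$; a further pre-annihilator then returns $((V^*)_G)_\perp=V^G$ via the identity already proved.

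The final dichotomy is purely algebraic.  If $W$ is a $G$-invariant subspace not contained in $V^G$, I would simply pick $w\in W$ and $g\in G$ with $\pi(g)w\neq w$; by $G$-invariance, $w-\pi(g)w\in W$, and by definition $w-\pi(g)w\in V_G$, producing a nonzero element of $W\cap V_G$.  I do not expect any genuine obstacle; the only real care required is bookkeeping which of $\perp$ and $_\perp$ denotes annihilator versus pre-annihilator at each level, and ensuring that the one appeal to the bipolar theorem is applied to a subspace that is closed by construction, which is built into the definitions of $V_G$ and $(V^*)_G$.
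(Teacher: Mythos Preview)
Your proposal is correct and follows essentially the same route as the paper: the paper verifies $(V^*)^G=(V_G)^\perp$ by the direct computation $\phi(v-\pi(g)v)=(\phi-\pi(g)^*\phi)(v)$, then declares the analogous identities $V^G=((V^*)_G)_\perp$, $(V^*)^G=((V^{**})_G)_\perp$ and $(V^{**})^G=((V^*)_G)^\perp$ to follow by ``similar arguments'', and finally chains $V^G=((V^*)_G)_\perp=(((V^*)_G)^\perp)_{\perp\perp}=((V^{**})^G)_{\perp\perp}$ exactly as you do via the bipolar step. Your treatment of the final dichotomy is identical to the paper's; the only cosmetic difference is that you spell out the ``similar arguments'' and the bipolar invocation that the paper leaves implicit.
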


\begin{proof}
Indeed, for $\phi\in V^*$, we have
\maths{
\phi\in (V^*)^G
&\equi \a g\in G\;\a v\in V \;\phi(v-\pi(g)v)=(\phi-\pi(g)^*\phi)(v)=0\\
&\equi \phi\in (V_G)^\perp.
}
Similar arguments show  the equalities  $V^G=((V^*)_G)_\perp$, $(V^*)^G=((V^{**})_G)_\perp$ and $(V^{**})^G=((V^*)_G)^\perp$.
From this it finally follows that
$$
V^G=((V^*)_G)_\perp=(((V^*)_G)^\perp)_{\perp\perp}=((V^{**})^G)_{\perp\perp}.
$$
For the last part, note that, if $W\not\subseteq V^G$, we can pick some $w\in W\setminus V^G$ and find $g\in G$ so that $\pi(g)w\neq w$. Then $0\neq w-\pi(g)w\in W\cap V_G$.
\end{proof}

Suppose now that $G$ is a topological group, that $(V,\pi)$ is an isometric Banach $G$-module and fix a mean $\go r$ on the algebra ${\sf LUCB}(G)$. Then, for all $v\in V$ and $\phi\in V^*$, the coefficient function
$$
g\in G\mapsto \phi(\pi(g)v)\in \R
$$
belongs to ${\sf LUCB}(G)$. We may therefore define the associated {\em mean operator}
$$
V\overset R\longrightarrow V^{**}
$$
by letting
$$
\langle \phi,Rv\rangle=\go r\big(\phi(\pi(\cdot)v)\big)
$$
for all $v\in V$ and $\phi\in V^*$.

For the following, we will need an easy consequence of the Hahn--Banach hyperplane separation theorem (see Theorem 2.4.7 \cite{pedersen} for the exact statement used).

\begin{lemme}\label{hahn}
Suppose $C$ is a non-empty convex subset of a Banach space $V$ and let $w\in V^{**}$ belong to the $w^*$-closure of $C$. Then
$$
\norm w\geqslant \inf_{v\in C}\norm v.
$$
\end{lemme}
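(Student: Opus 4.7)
The plan is to run a routine Hahn--Banach separation argument and push the resulting functional through the weak-$*$ convergence. The inequality is trivial when $r:=\inf_{v\in C}\norm v=0$, so we may assume $r>0$ and fix an arbitrary $\eps\in (0,r)$.

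The first step is to observe that the open ball $B(0,r-\eps)\subseteq V$ is disjoint from $C$ by the definition of $r$. Since $B(0,r-\eps)$ is open and convex and $C$ is convex, Theorem 2.4.7 of \cite{pedersen} furnishes a non-zero functional $\phi\in V^*$ and a scalar $c\in\R$ with
$$
\phi(u)<c\leqslant \phi(v)\qquad \text{for all } u\in B(0,r-\eps),\; v\in C.
$$
Taking the supremum over $u$ gives $(r-\eps)\norm\phi\leqslant c$, and after rescaling we may assume $\norm\phi=1$, so that $\phi(v)\geqslant r-\eps$ for every $v\in C$.

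Next I would exploit that $w$ lies in the $w^*$-closure of $C\subseteq V\subseteq V^{**}$: pick a net $(v_\alpha)$ in $C$ with $\hat v_\alpha\to w$ in the $w^*$-topology on $V^{**}$. Evaluating at the functional $\phi$ produced above yields
$$
\langle\phi,w\rangle=\lim_\alpha \langle\phi,\hat v_\alpha\rangle=\lim_\alpha \phi(v_\alpha)\geqslant r-\eps.
$$
Since $\norm\phi=1$, it follows that $\norm w\geqslant \langle\phi,w\rangle\geqslant r-\eps$. Letting $\eps\to 0$ gives $\norm w\geqslant r$, as desired.

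There is no real obstacle here; the only mild subtlety is to be sure the separation is performed in $V$ (not $V^{**}$) so that the functional $\phi$ comes from $V^*$ and therefore remains continuous on $V^{**}$ with respect to the $w^*$-topology, which is exactly what makes the limit transfer in the last display legitimate.
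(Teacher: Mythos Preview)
Your proof is correct and follows essentially the same Hahn--Banach separation argument as the paper: separate $C$ from an open ball about the origin in $V$, normalize the resulting $\phi\in V^*$, and then use $w^*$-continuity of evaluation at $\phi$ to pass to $w$. The only cosmetic difference is that the paper separates $C$ directly from the full open ball of radius $r$ (yielding $\sup_{u\in B}\phi(u)=r$ after normalization), so the auxiliary $\eps$ and the final limiting step are unnecessary.
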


\begin{proof}
Without loss of generality, $\eta=\inf_{v\in C}\norm v>0$.  So, let $B$ be the open ball in $V$ of radius $\eta$ centred at the origin. Then $B$ and $C$ are disjoint non-empty convex sets with $B$ open  and so, by the hyperplane separation theorem, there is $\phi\in V^*$ so that, for all $v\in B$, 
$$
\phi(v)< \inf_{u\in C}\phi(u).
$$
Replacing $\phi$ by $\frac \phi{\norm\phi}$, we may suppose that $\norm\phi=1$. Thus, 
$$
\inf_{v\in C}\norm v=\eta=\sup_{v\in B}\phi(v)\leqslant \inf_{v\in C}\phi(v)\leqslant \langle \phi, w\rangle \leqslant \norm w
$$
as claimed.
\end{proof}

\begin{prop}\label{prop:meanoper}
Suppose $G$ is a topological group, $(V,\pi)$ is an isometric Banach $G$-module and that $\go r$ is a mean on $G$. 
Then the associated mean operator $V\overset R\longrightarrow V^{**}$ and adjoint $V^{***}\maps {R^*}V^*$ satisfy
$$
Rv\in \ov{   \pi(\Delta G)v} ^{w^*},
$$
$$
R^*\phi\in \ov{   \pi(\Delta G)^*\phi} ^{w^*},
$$
and 
$$
\inf_{\beta\in \Delta G}\norm{\pi(\beta)v}\leqslant \norm{Rv}\leqslant \norm v
$$
for all $v\in V$ and $\phi\in V^*$. Furthermore, $R=I$ on $V^G$, whereas $R^*=I$ on $(V^*)^G$.

It follows that
$$
{\sf ker}\, R\;\subseteq\; \Mgd{v\in V}{0\in \ov{   \pi(\Delta G)v} ^{\norm\cdot}} \;\subseteq\; V_G
$$
and that
$$
V^G\;\subseteq \; {\sf ker}\,(I-R)\;\subseteq\; R\inv(V)\;=\;\Mgd{v\in V}{Rv\in \ov{   \pi(\Delta G)v} ^{\norm\cdot}},
$$
whereby 
$$
R\inv(V)\;\maps {I-R}\; V_G.
$$
\end{prop}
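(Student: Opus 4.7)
The strategy is to realise the abstract mean operator $R$ concretely as a $w^*$-limit of the convex combinations $\pi(\beta)$, $\beta\in\Delta G$, and then to extract all the stated consequences from the interplay between $w^*$ and norm closures of such convex sets.

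The first step is the well-known fact that, under the canonical embedding $\Delta G \hookrightarrow {\sf LUCB}(G)^*$ given by $\beta=\sum_i\lambda_ig_i \mapsto \big(\psi\mapsto \sum_i\lambda_i\psi(g_i)\big)$, the set $\Delta G$ is $w^*$-dense in the convex set of all means on ${\sf LUCB}(G)$; this follows by a routine Hahn--Banach separation argument, using that $\go m(\psi)\leqslant \sup_{g\in G}\psi(g)=\sup_{\beta\in\Delta G}\beta(\psi)$ for every mean $\go m$. So fix a net $(\beta_\alpha)$ in $\Delta G$ with $\beta_\alpha\to\go r$ in $w^*$. For $v\in V$ and $\phi\in V^*$, the coefficient function $g\mapsto \phi(\pi(g)v)$ lies in ${\sf LUCB}(G)$, and unpacking definitions,
\[
\langle\phi,Rv\rangle = \go r\big(\phi(\pi(\cdot)v)\big) = \lim_\alpha \phi(\pi(\beta_\alpha)v) = \lim_\alpha \big(\pi(\beta_\alpha)^*\phi\big)(v).
\]
This simultaneously establishes (1) and (2): $\pi(\beta_\alpha)v\to Rv$ in the $w^*$-topology on $V^{**}$, and $\pi(\beta_\alpha)^*\phi\to R^*\phi$ in the $w^*$-topology on $V^*$.

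The norm bound $\norm{Rv}\leqslant \norm{v}$ is immediate from $\norm{\go r}=1$, while the reverse inequality $\inf_{\beta\in\Delta G}\norm{\pi(\beta)v}\leqslant \norm{Rv}$ follows from Lemma~\ref{hahn} applied to the convex set $\pi(\Delta G)v$ (convex as the image of $\Delta G$ under a linear map). If $v\in V^G$, then $g\mapsto \phi(\pi(g)v)=\phi(v)$ is constant, so $\langle\phi,Rv\rangle=\phi(v)\go r(\1)=\phi(v)$, giving $Rv=v$; the analogue for $R^*$ on $(V^*)^G$ is identical.

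The inclusion chains then follow mechanically. If $Rv=0$, the lower norm bound forces $0\in\ov{\pi(\Delta G)v}^{\norm\cdot}$; picking $\beta_n=\sum_i\lambda_i^ng_i^n\in\Delta G$ with $\pi(\beta_n)v\to 0$ in norm and writing $v-\pi(\beta_n)v=\sum_i\lambda_i^n(v-\pi(g_i^n)v)\in V_G$, closedness of $V_G$ gives $v=\lim (v-\pi(\beta_n)v)\in V_G$. The chain $V^G\subseteq \ker(I-R)\subseteq R\inv(V)$ is immediate from $R|_{V^G}=I$. For the equality $R\inv(V)=\{v : Rv\in\ov{\pi(\Delta G)v}^{\norm\cdot}\}$, the inclusion $\supseteq$ is trivial; for $\subseteq$, the key point is that the $w^*$-topology on $V^{**}$ restricts via the canonical embedding to the weak topology on $V$, so $Rv\in V$ places $Rv$ in the weak closure of the convex set $\pi(\Delta G)v\subseteq V$, which by Mazur's theorem coincides with its norm closure. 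Writing $Rv=\lim\pi(\beta_n)v$ in norm, the same expansion gives $v-Rv=\lim(v-\pi(\beta_n)v)\in V_G$, completing the final arrow. The single genuine hurdle is the $w^*$-density of $\Delta G$ among means, which is the bridge from the abstract operator $R$ to the concrete orbit $\pi(\Delta G)v$; once that is secured, everything reduces to careful bookkeeping controlled by Lemma~\ref{hahn} and Mazur's theorem.
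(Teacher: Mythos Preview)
Your proof is correct and follows essentially the same line as the paper's. The only variation is cosmetic: you establish $Rv\in\overline{\pi(\Delta G)v}^{w^*}$ by first showing that $\Delta G$ is $w^*$-dense among means on ${\sf LUCB}(G)$ and then passing to an approximating net, whereas the paper argues directly by Hahn--Banach separation on $Rv$ itself (if some $\phi\in V^*$ separated $Rv$ from the closure, the mean inequality $\go r(\psi)\geqslant\inf\psi$ gives an immediate contradiction); the remaining steps---Lemma~\ref{hahn} for the lower norm bound, Mazur's theorem for the equality $R^{-1}(V)=\{v:Rv\in\overline{\pi(\Delta G)v}^{\norm\cdot}\}$, and the passage to $V_G$---are identical.
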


\begin{proof}
Suppose towards a contradiction  that $Rv\notin \ov{   \pi(\Delta G)v} ^{w^*}$  for some $v\in V$ and pick by the Hahn--Banach separation theorem for the $w^*$-topology on $V^{**}$ some $\phi\in V^*$ such that 
$$
\langle \phi, Rv\rangle < \inf_{\beta\in \Delta G}\langle \phi, \pi(\beta)v\rangle\leqslant \phi(\pi(\cdot)v).
$$ 
Because $\go r$ is a mean, it follows that 
$$
\langle \phi, Rv\rangle < \inf_{\beta\in \Delta G}\langle \phi, \pi(\beta)v\rangle\leqslant \go r(\phi(\pi(\cdot)v))=\langle \phi,Rv\rangle,
$$
which is absurd. 

Similarly, if $\phi\in V^*$ satisfies $R^*\phi\notin \ov{   \pi(\Delta G)^*\phi} ^{w^*}$, there is some $v\in V$ such that 
$$
\langle \phi, Rv\rangle=\langle R^*\phi, v\rangle <  \inf_{\beta\in \Delta G}\langle \pi(\beta)^*\phi, v\rangle=\inf_{\beta\in \Delta G}\langle \phi, \pi(\beta)v\rangle\leqslant \phi(\pi(\cdot)v), 
$$ 
given again the absurd conclusion
$$
\langle \phi, Rv\rangle < \inf_{\beta\in \Delta G}\langle \phi, \pi(\beta)v\rangle\leqslant \go r(\phi(\pi(\cdot)v))=\langle\phi,Rv\rangle.
$$

Thus, for all $v\in V$, $Rv\in \ov{   \pi(\Delta G)v} ^{w^*}$ and so,  by Lemma \ref{hahn}, we find that $\norm{Rv}\geqslant \inf_{\beta\in \Delta G}\norm{\pi(\beta)v}$. Also, since $\go r$ is a mean, we have that $\langle \phi,Rv\rangle=\go r(\phi(\pi(\cdot)v))\leqslant \norm{\phi}\norm{v}$ for all $v$ and $\phi$, whereby $\norm R\leqslant 1$, which gives us  the inequality
$$
\inf_{\beta\in \Delta G}\norm{\pi(\beta)v}\leqslant \norm{Rv}\leqslant \norm v.
$$
It follows that, if $v\in {\sf ker}\,R$, then $\inf_{\beta\in \Delta G}\norm{\pi(\beta)v}=0$ and so $0\in \ov{   \pi(\Delta G)v} ^{\norm\cdot}$.

Note also that, if $0\in \ov{\pi(\Delta G)v}^{\norm\cdot}$, we may choose a sequence of elements $\beta_n\in \Delta G$ so that $\pi(\beta_n)v\Lim{n} 0$, whereby
$$
v=\lim_n(v-\pi(\beta_n)v)\in V_G.
$$
This establishes the inclusions ${\sf ker}\, R\subseteq \Mgd{v\in V}{0\in \ov{   \pi(\Delta G)v} ^{\norm\cdot}} \subseteq V_G$.

Observe now that, if either $v\in V^G$ or $\phi\in(V^*)^G$, we have 
$$
\langle R^*\phi,v\rangle=\langle \phi,Rv\rangle=\go r\big(\phi(\pi(\cdot)v)\big)=\go r\big(\phi(v)\big)=\langle \phi,v\rangle.
$$
So $R=I$ on $V^G$, whereas $R^*=I$ on $(V^*)^G$.  In particular, we have that $V^G\subseteq  {\sf ker}\,(I-R)\subseteq R\inv (V)$.

For the last part, observe that, if $Rv\in V$ for some $v\in V$, then actually 
$$
Rv\in V\cap  \ov{   \pi(\Delta G)v} ^{w^*}= \ov{   \pi(\Delta G)v} ^{\norm\cdot} 
$$
and so also
$$
(I-R)v=v-Rv\in V_G.
$$
\end{proof}

Observe that, for every $w\in V$ and $g\in G$, we may let  
$$
\beta_n=\frac{1+g+\cdots+g^{n-1}}n\in \Delta G
$$
and note that
$$
\pi(\beta_n)(w-\pi(g)w)=\frac{w-\pi(g^n)w}n\,\Lim{n}\,0,
$$
which shows that $w-\pi(g)w \in \Mgd{v\in V}{0\in \ov{   \pi(\Delta G)v} ^{\norm\cdot}}$. Nevertheless, this does not show that the set $\Mgd{v\in V}{0\in \ov{   \pi(\Delta G)v} ^{\norm\cdot}}$ equals the subspace $V_G$, since the former may not be a closed linear subspace. In fact, as can be seen from the results of Section \ref{decompo}, this characterises amenability of $G$.



\section{Decompositions for amenable groups}\label{decompo}
Assume now that we are dealing with an amenable topological group $G$ and an isometric Banach $G$-module $(V,\pi)$. In this case, we can fix a mean $\go m$ on ${\sf LUCB}(G)$ that is invariant with respect to the action $G\acts\rho{\sf LUCB}(G)$ and let  $V\overset M\longrightarrow V^{**}$ be the associated mean operator, given by
$$
\langle \phi,Mv\rangle=\go m\big(\phi(\pi(\cdot)v)\big).
$$
As a mean operator, Proposition \ref{prop:meanoper} immediately applies, but naturally $\rho$-invariance gives us more.

The next theorem has antecedents in the work of  S. Kakutani \cite{kakutani}, K. Yosida \cite{yosida}, L. Alaoglu and G. Birkhoff \cite{alaoglu}, R. J. Nagel \cite{nagel} and others. In particular, the complementation of $(V^*)^G$ in $V^{*}$ for the particular case when $G$ is either locally compact or balanced\footnote{We recall that a topological group is {\em balanced} or {\em SIN} if it has a neighbourhood basis at the identity consisting of conjugacy invariant sets. In the setting of metrisable topological groups, this is equivalent to admitting a compatible two-sided invariant metric, sometimes denoted as {\em tsi}.} is demonstrated by Cúth and Doucha in \cite{cuth}.

\begin{thm}\label{thm:alaoglu}
Suppose $G$ is an amenable topological group with $\rho$-invariant mean $\go m$ and  $(V,\pi)$ is an isometric Banach $G$-module.  Then the mean operator $V\overset M\longrightarrow V^{**}$  is $G$-invariant, that is, $M\pi(g)=M$ for all $g\in G$, whereby ${\sf ker}\,M=V_G$
and the adjoint operator
$$
V^{***}\overset{M^*}\longrightarrow V^*
$$
is a projection onto the $G$-invariant subspace $(V^*)^G$.

Moreover, 
$$
\norm{Mv}=\inf_{\beta\in \Delta G}\norm{\pi(\beta)v}
$$
and, for any finite subset $F\subseteq V_G$,
$$
\inf_{\beta\in \Delta G}\;\max_{v\in F}\;\Norm{\pi(\beta)v}=0.
$$
\end{thm}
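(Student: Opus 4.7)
The plan is to establish the four claims in cascade, with the $G$-invariance of $M$ doing the heavy lifting. For $M\pi(g) = M$, I would unfold the definition and observe that the coefficient function $h \mapsto \phi(\pi(hg)v)$ is precisely $\rho(g)$ applied to $h \mapsto \phi(\pi(h)v)$; the $\rho$-invariance of $\go m$ then forces $\langle \phi, M\pi(g)v\rangle = \langle \phi, Mv\rangle$ for all $\phi \in V^*$. By linearity this extends to $M\pi(\beta) = M$ for every $\beta \in \Delta G$.

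The remaining assertions follow formally from this $G$-invariance combined with Proposition \ref{prop:meanoper}. Elements of the form $v - \pi(g)v$ visibly lie in ${\sf ker}\, M$, and closedness of ${\sf ker}\, M$ then yields $V_G \subseteq {\sf ker}\, M$; the reverse inclusion is already supplied by Proposition \ref{prop:meanoper}. Dually, $\pi(g)^* M^* = (M\pi(g))^* = M^*$, so the image of $M^*$ lies inside $(V^*)^G$, while Proposition \ref{prop:meanoper} guarantees $M^* = I$ on $(V^*)^G$ (viewed canonically inside $V^{***}$); the two facts together make $M^*$ a projection onto $(V^*)^G$. For the norm equality, $G$-invariance gives $\norm{Mv} = \norm{M\pi(\beta)v} \leqslant \norm{\pi(\beta)v}$ for every $\beta \in \Delta G$ since $\norm{M} \leqslant 1$, and the reverse inequality $\inf_{\beta}\norm{\pi(\beta)v} \leqslant \norm{Mv}$ is precisely what Proposition \ref{prop:meanoper} delivers.

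The simultaneous approximation for a finite set $F = \{v_1, \ldots, v_n\} \subseteq V_G$ I would handle via a standard product-module trick: endow $V^n$ with the supremum norm and the diagonal $G$-action, which is again a continuous isometric Banach $G$-module with mean operator $\tilde M$ sending $(w_1,\ldots,w_n)$ to $(Mw_1,\ldots,Mw_n)$. Since each $Mv_i = 0$ by what has just been shown, $\tilde M(v_1,\ldots,v_n) = 0$, and applying the norm identity to $\tilde M$ on this module gives $\inf_{\beta \in \Delta G}\,\max_i \norm{\pi(\beta)v_i} = 0$. The only subtle point in the whole argument is matching conventions so that $\rho$-invariance of $\go m$ translates into right-invariance $M\pi(g)=M$ rather than the wrong-sided version; once this is pinned down everything cascades mechanically, and I do not anticipate genuine obstacles.
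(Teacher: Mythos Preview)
Your proof is correct and, for the $G$-invariance of $M$, the identification of $\ker M$, the projection property of $M^*$, and the norm identity, it proceeds exactly as the paper does (up to phrasing: the paper shows ${\sf im}\,M^*\subseteq (V_G)^\perp=(V^*)^G$ via the pairing $\langle M^*v^{***},w\rangle=\langle v^{***},Mw\rangle=0$ for $w\in V_G$, which is equivalent to your dualisation $\pi(g)^*M^*=M^*$).

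The one genuine difference is in the final simultaneous-approximation claim. The paper argues \emph{iteratively}: given $v_1,\ldots,v_n\in V_G$ and $\eps>0$, choose $\beta_1\in\Delta G$ with $\norm{\pi(\beta_1)v_1}<\eps$; since $V_G$ is $G$-invariant, $\pi(\beta_1)v_2\in V_G$, so choose $\beta_2$ with $\norm{\pi(\beta_2\beta_1)v_2}<\eps$; continue, and set $\beta=\beta_n\cdots\beta_1$, using that each $\pi(\beta_k)$ is a contraction to preserve the earlier estimates. Your product-module trick is a clean alternative: it packages the iteration into a single application of the norm identity $\norm{\tilde M w}=\inf_\beta\norm{\tilde\pi(\beta)w}$ on $V^n$, at the mild cost of checking that the mean operator on the $\ell^\infty$-sum acts coordinatewise. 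Both arguments are standard; yours is arguably more conceptual, while the paper's is self-contained and makes the role of the semigroup $\Delta G$ explicit.
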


\begin{proof}
Observe that, for all $g\in G$, $v\in V$ and $\phi\in V^*$, we have
$$
\langle \phi,M(\pi(g)v)\rangle=\go m\big(\phi(\pi(\cdot\,g)v)\big)=\go m\big(\rho(g)\big[\phi(\pi(\cdot)v)\big]\big)=\go m\big(\phi(\pi(\cdot)v)\big)=\langle \phi,Mv\rangle.
$$
It follows that $M\pi(g)=M$ for all $g\in G$, whereby $V_G\subseteq {\sf ker}\,M$  and hence ${\sf ker}\,M=V_G$ by Proposition \ref{prop:meanoper}.

To see that the adjoint $V^{***}\overset{M^*}\longrightarrow V^*$ maps into $(V^*)^G =(V_G)^\perp$, observe that,
for all $v^{***}\in V^{***}$ and $w\in V_G$, we have
$$
\langle M^*v^{***}, w\rangle=\langle v^{***}, Mw\rangle=\langle v^{***}, 0\rangle=0.
$$
Also, by Proposition \ref{prop:meanoper}, $M^*$ is the identity on $(V^*)^G$ and therefore is a projection onto $(V^*)^G$.

Applying Proposition \ref{prop:meanoper} and using the $G$-invariance of $M$, we find that
$$
\norm{Mv}=\inf_{\beta\in \Delta G}\norm{M(\pi(\beta)v)}\leqslant\inf_{\beta\in \Delta G}\norm{\pi(\beta)v}\leqslant \norm{Mv},
$$
giving us $\norm{Mv}=\inf_{\beta\in \Delta G}\norm{\pi(\beta)v}$.

Now, suppose $v_1,\ldots, v_n\in V_G$ and $\eps>0$. Find then $\beta_1\in \Delta G$ so that $\norm{\pi(\beta_1)v_1}<\eps$ and note that, because $V_G$ is $G$-invariant, also $\pi(\beta_1)v_2\in V_G$.  Choose therefore $\beta_2\in \Delta G$ so that $\norm{\pi(\beta_2\beta_1)v_2}<\eps$ and note again that $\pi(\beta_2\beta_1)v_3\in V_G$. Continuing in this manner, we eventually produce $\beta_1,\ldots, \beta_n\in \Delta G$, so that
$$
\norm{\pi(\beta_n\cdots\beta_1)v_i}\leqslant \norm{\pi(\beta_i\cdots\beta_1)v_i}<\eps
$$
for all $i=1,\ldots,n$. This shows that $\inf_{\beta\in \Delta G}\;\max_{v\in F}\;\Norm{\pi(\beta)v}=0$ for any finite subset $F\subseteq G$.
\end{proof}

\begin{cor}\label{cor:alaoglu}
Under the assumptions of Theorem \ref{thm:alaoglu}, we have the following direct sum decomposition
$$
M\inv(V)\;=\;\Mgd{v\in V}{Mv\in \ov{   \pi(\Delta G)v} ^{\norm\cdot}}\;=\; V_G\oplus {\sf ker}\,(I-M)
$$
and so the two subspaces $V_G$ and $V^G$ form a direct sum in $V$.
\end{cor}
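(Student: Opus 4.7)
The first equality in the displayed formula is already contained in Proposition \ref{prop:meanoper}, so the real content is the direct sum decomposition $M\inv(V) = V_G \oplus \ker(I-M)$. My plan is to use the mean operator $M$ itself as the projection that produces the decomposition, leveraging the fact that, by Theorem \ref{thm:alaoglu}, $\ker M = V_G$ exactly (and not merely $\ker M \subseteq V_G$ as in the general Proposition \ref{prop:meanoper}).

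For the inclusion $V_G + \ker(I-M) \subseteq M\inv(V)$, note that $V_G = \ker M$ lies trivially in $M\inv(V)$ (the image is $0 \in V$), and $\ker(I-M)$ lies in $M\inv(V)$ by definition (if $v = Mv$ then $Mv \in V$). For the reverse inclusion, the key move is: given $v \in M\inv(V)$, write the tautological decomposition
$$
v = (v - Mv) + Mv.
$$
Proposition \ref{prop:meanoper} already tells us that $(I-M)$ sends $M\inv(V)$ into $V_G$, so the first summand lies in $V_G$. To see that the second summand lies in $\ker(I-M)$, apply $M$ to the first summand: since $v - Mv \in V_G = \ker M$, we get $M(v - Mv) = 0$, i.e.\ $M(Mv) = Mv$, so $Mv \in \ker(I-M)$ as required.

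Directness is immediate: if $v \in V_G \cap \ker(I-M)$, then $v = Mv$ (from the second condition) and $Mv = 0$ (from $V_G = \ker M$), so $v = 0$. The concluding sentence then follows because $V^G \subseteq \ker(I-M)$ by Proposition \ref{prop:meanoper} (which asserts $R = I$ on $V^G$), so $V^G \cap V_G \subseteq \ker(I-M) \cap V_G = \{0\}$, proving that $V_G$ and $V^G$ sit in direct sum position inside $V$.

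No step here is a serious obstacle; all the analytic work has been done in Theorem \ref{thm:alaoglu} and Proposition \ref{prop:meanoper}. The one subtlety worth flagging is that the identity $M(Mv) = Mv$ relies crucially on the equality $\ker M = V_G$, which in turn uses $\rho$-invariance of $\go m$; without amenability one would only have $\ker M \subseteq V_G$ and the decomposition would collapse.
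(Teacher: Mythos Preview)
Your proof is correct and follows essentially the same approach as the paper: both arguments use the tautological splitting $v = (v - Mv) + Mv$ and verify that $I-M$ (equivalently $M$) restricts to a projection on $M^{-1}(V)$, with the identification $\ker M = V_G$ from Theorem \ref{thm:alaoglu} supplying both the range of $I-M$ and the directness of the sum. Your explicit computation $M(Mv)=Mv$ via $v-Mv\in V_G=\ker M$ is exactly the idempotency of the projection that the paper invokes.
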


\begin{proof}
By Proposition \ref{prop:meanoper} and Theorem \ref{thm:alaoglu}, we have that 
$$
M\inv(V)\maps{I-M}V_G={\sf ker}\,M \subseteq M\inv(V).
$$
Thus, as $I-M$ acts as the identity on ${\sf ker}\,M$, we see that it is a projection of $M\inv(V)$ onto the subspace ${\sf ker}\,M$ and that $M$ is the complimentary projection of $M\inv(V)$ onto ${\sf ker}\,(I-M)$. Using that ${\sf ker}\,M=V_G$ and ${\sf ker}\,(I-M)\supseteq V^G$, the last statement follows.
\end{proof}

If $(V,\pi)$ is a strictly convex Banach $G$-module, then the conclusion of Corollary \ref{cor:alaoglu} can be refined. Indeed, suppose $Mv=v$. Then, since also $\norm v=\norm{Mv}=\inf_{\beta\in \Delta G}\norm{\pi(\beta)v}$, we find that $v$ is an element of minimal norm in the convex set $ \ov{   \pi(\Delta G)v} ^{\norm\cdot}$ and so, by strict convexity, we have that $\{v\}=\ov{   \pi(\Delta G)v} ^{\norm\cdot}$, i.e., $v\in V^G$. Thus, in this case, we find that $ {\sf ker}\,(I-M)=V^G$ and so
$$
M\inv(V)=V_G\oplus V^G.
$$

Part of the conclusion of Theorem \ref{thm:alaoglu} can alternatively be reformulated using the Banach space theore\-tical notion of {\em local complementation}, which is worth reviewing. 
\begin{defi}\label{defi:loc compl}
Suppose $V$ is a Banach space and $W$ is a closed linear subspace. Then $W$ is {\em locally complemented} in $V$ if there is a constant $C\geqslant 1$ such that, for all finite-dimensional linear subspaces $F\subseteq V$ and  $\eps>0$, there is a linear operator $F\maps T W$ with $\norm T\leqslant C$ and
$$
\Norm{ v-Tv}<\eps\norm v
$$
for all $v\in W\cap F$.
\end{defi}

Local complementability was first studied by H. Fakhoury \cite{fakhoury} under a different name and reintroduced by N. Kalton \cite{kalton} with the current terminology. The following set of equivalences, largely due to Fakhoury, can be found as Lemma 3.2 and Theorem 3.5 in \cite{kalton}.
\begin{thm}\label{loc compl char}
The following conditions are equivalent for a closed linear subspace $W$ of a Banach space $V$. 
\begin{enumerate}
\item $W$ is locally complemented in $V$,
\item for some constant $C'$ and all closed linear subspaces $W\subseteq Z\subseteq V$ with ${\sf codim}_ZW<\infty$, there is a linear projection
$Z\maps PW$ with $\norm P\leqslant C'$,
\item $W^{**}$ is complemented in $V^{**}$ under its natural embedding,
\item the short exact sequence 
$$
0\to W^\perp \to V^*\to W^* \to 0
$$
splits linearly, that is, there is a bounded linear operator $W^*\maps L V^*$ such that 
$$
\langle w,L\phi\rangle =\langle w,\phi\rangle
$$ 
for all $w\in W$ and $\phi\in W^*$, 
\item for some constant $C''$ and all compact operators $W\maps K Z$ with values in any Banach space $Z$, there is a compact extension $V\maps{\tilde K}Z$ with $\norm{\tilde K}\leqslant C''\norm{K}$.
\end{enumerate}
\end{thm}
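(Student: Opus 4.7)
The plan is to establish the cycle (2) $\Rightarrow$ (1) $\Rightarrow$ (3) $\Rightarrow$ (2) along with (3) $\Leftrightarrow$ (4), and then separately (1) $\Leftrightarrow$ (5). The easy implication (2) $\Rightarrow$ (1) is obtained by taking $Z = W + \spa(F)$ for a finite-dimensional $F \subseteq V$, so that the projection $P\colon Z\to W$ provided by (2) restricts to an operator $T = P|_F\colon F\to W$ of norm at most $C'$ with $Tv=v$ for every $v\in W\cap F$; thus (1) holds with constant $C'$ and arbitrary $\eps$.

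For (1) $\Rightarrow$ (3), the approach is a $w^*$-compactness argument. For each finite-dimensional $F\subseteq V$ and $\eps>0$, choose $T_{F,\eps}\colon F\to W$ as guaranteed by (1). Fix an ultrafilter $\mathcal U$ on the directed set of such pairs, refined by $F\subseteq F'$ and $\eps\geqslant \eps'$, and set
\[
Tv = w^*\text{-}\lim_{\mathcal U} T_{F,\eps}v \in W^{**}
\]
for each $v\in V$, using that the tail values lie in a $w^*$-compact ball of radius $C\norm v$ once $F$ contains $v$. The resulting $T\colon V\to W^{**}$ is linear, of norm at most $C$, and satisfies $Tw=w$ (under the canonical embedding $W\hookrightarrow W^{**}$) for every $w\in W$, since in that case $\norm{w-T_{F,\eps}w}<\eps\norm w$ eventually. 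Now set $L=T^*|_{W^*}\colon W^*\to V^*$, viewing $W^*$ inside $W^{***}$ canonically. For $w\in W$ and $\phi\in W^*$ we have $\langle w,L\phi\rangle = \langle Tw,\phi\rangle = \phi(w)$, so $L$ splits the quotient $\pi\colon V^*\to W^*$ linearly, giving (4). Dualizing, $L^*\pi^* = (\pi L)^* = \Id_{W^{**}}$, so $\pi^* L^*\colon V^{**}\to V^{**}$ is a bounded projection with range $\pi^*(W^{**})$, the canonical copy of $W^{**}$ in $V^{**}$. This yields (3), and (4) $\Rightarrow$ (3) follows by the same dualization.

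The main obstacle is (3) $\Rightarrow$ (2), which I expect requires the \emph{principle of local reflexivity}. Given a bounded projection $Q\colon V^{**}\to W^{**}$ and a closed subspace $W\subseteq Z\subseteq V$ with ${\sf codim}_Z W<\infty$, the plan is to use local reflexivity to transfer $Q$ from the bidual down to a bounded operator on $Z$. Concretely, applying the principle to the finite-dimensional subspace $Q(\iota_V(Z))+\iota_V(Z)$ of $V^{**}$, paired with a finite-dimensional subspace of $W^\perp\subseteq V^*$ chosen to separate $Z/W$, one produces norm-$(1+\eps)$ operators back into $V$ that, after composition with $Q\circ\iota_V$, yield projections of $Z$ onto $W$ with norm controlled in terms of $\norm Q$. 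Finally, the equivalence (1) $\Leftrightarrow$ (5) is established independently: given (1), a finite-rank operator $K\colon W\to Z$ is extended by composing $K$ with the local $T_F$'s and passing to a strong-operator limit, while an arbitrary compact $K$ is handled by approximation in norm by finite-rank operators; conversely, applying (5) to carefully chosen finite-rank idempotents $W\to W$ supported on $W\cap F$ recovers the local projections of (1).
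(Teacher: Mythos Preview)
The paper does not prove this theorem. Immediately before stating it, the author writes that ``the following set of equivalences, largely due to Fakhoury, can be found as Lemma 3.2 and Theorem 3.5 in \cite{kalton},'' and no argument is supplied. The result is quoted purely as a known tool, used only to deduce the subsequent corollary that $V_G$ is locally complemented in $V$. So there is no proof in the paper to compare your sketch against.

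On the substance of your sketch: the cycle (2) $\Rightarrow$ (1) $\Rightarrow$ (4) $\Rightarrow$ (3) $\Rightarrow$ (2) is handled correctly and follows the standard route in Kalton's paper. Your treatment of condition (5), however, has two gaps. For (1) $\Rightarrow$ (5) you reduce to finite-rank $K$ and then invoke ``approximation in norm by finite-rank operators'' for a general compact $K$; this requires the approximation property on the target $Z$, which is not assumed. The correct route goes through (3): for compact $K\colon W\to Z$ one has $K^{**}(W^{**})\subseteq Z$, so $K^{**}Q\,\iota_V\colon V\to Z$ is the desired compact extension with norm $\leqslant\norm Q\cdot\norm K$. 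For (5) $\Rightarrow$ (1), finite-rank idempotents of $W$ onto $W\cap F$ have norms that grow with $\dim(W\cap F)$, so no uniform constant $C$ emerges. Instead one proves (5) $\Rightarrow$ (4): for each finite-dimensional $E\subseteq W^*$ the evaluation map $W\to E^*$ is a finite-rank operator of norm $\leqslant 1$; extend it by (5), dualise to obtain a map $E\to V^*$ of norm $\leqslant C''$ lifting the inclusion $E\hookrightarrow W^*$, and pass to a $w^*$-ultralimit over $E$ to produce the lift $L\colon W^*\to V^*$.
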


We then have the following corollary of Theorem \ref{thm:alaoglu}.

\begin{cor}\label{cor: loc compl}
Suppose $G$ is an amenable topological group and  $(V,\pi)$ is an isometric Banach $G$-module. Then $V_G$ is locally complemented in $V$.
\end{cor}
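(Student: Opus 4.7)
The plan is to verify condition~(4) of Theorem~\ref{loc compl char}: I will produce a bounded linear splitting $L\colon (V_G)^*\to V^*$ of the canonical restriction map. Combined with Hahn--Banach extensions, such a splitting is equivalent to providing a bounded linear projection $P\colon V^*\to V^*$ whose range is $(V_G)^\perp$, and such a projection is already implicit in the mean operator machinery of Theorem~\ref{thm:alaoglu}.

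Indeed, Theorem~\ref{thm:alaoglu} gives a contractive operator $M^*\colon V^{***}\to V^*$ which is a projection onto $(V^*)^G$, and by Lemma~\ref{M13} we have $(V^*)^G=(V_G)^\perp$. Composing $M^*$ with the canonical isometric embedding $\iota\colon V^*\hookrightarrow V^{***}$ yields a contraction $P:=M^*\circ\iota\colon V^*\to V^*$ with range contained in $(V_G)^\perp$. Since $M^*$ fixes every element of $(V^*)^G\subseteq V^*$, the operator $P$ is the identity on $(V_G)^\perp$, and so $P$ is a norm-one projection of $V^*$ onto $(V_G)^\perp$. Given $\phi\in (V_G)^*$, I pick any norm-preserving Hahn--Banach extension $\tilde\phi\in V^*$ and set $L\phi:=\tilde\phi-P\tilde\phi$. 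Any two norm-preserving extensions of $\phi$ differ by an element of $(V_G)^\perp$, on which $I-P$ vanishes, so $L\phi$ is independent of the chosen extension and $L$ is a well-defined linear operator with $\norm{L\phi}\leqslant 2\norm{\phi}$. Finally, for every $w\in V_G$,
$$
\langle w,L\phi\rangle=\langle w,\tilde\phi\rangle-\langle w,P\tilde\phi\rangle=\phi(w),
$$
because $P\tilde\phi\in (V_G)^\perp$. This is precisely condition~(4) of Theorem~\ref{loc compl char}, whence $V_G$ is locally complemented in $V$.

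The main point here is essentially conceptual rather than technical: the heavy lifting has already been done in Theorem~\ref{thm:alaoglu}, and the corollary amounts to noticing that the projection $M^*$ on $V^{***}$ restricts, via the canonical embedding, to an honest norm-one projection of $V^*$ itself onto $(V_G)^\perp$. No additional convexity, compactness, or averaging argument is required beyond what has already been developed.
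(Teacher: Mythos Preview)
Your proof is correct and follows essentially the same route as the paper: both invoke Theorem~\ref{thm:alaoglu} to obtain a bounded projection of $V^*$ onto $(V^*)^G=(V_G)^\perp$ and then conclude via condition~(4) of Theorem~\ref{loc compl char}. You have simply spelled out the standard passage from ``$(V_G)^\perp$ is complemented in $V^*$'' to ``the sequence $0\to (V_G)^\perp\to V^*\to (V_G)^*\to 0$ splits'' by explicitly constructing the section $L=(I-P)\circ(\text{Hahn--Banach extension})$, whereas the paper leaves this step implicit.
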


\begin{proof}
By Theorem \ref{thm:alaoglu}, $ (V_G)^\perp=(V^*)^G$ is complemented in $V^*$ and so the short exact sequence 
$0\to (V_G)^\perp \to V^*\to (V_G)^* \to 0$
splits linearly. This verifies Theorem \ref{loc compl char}(4) and so $V_G$ is locally complemented in $V$.
\end{proof}



\section{Skew amenability and $G$-equivariant projections}\label{sec:skew}
Our definition of amenability for topological groups, namely the existence of a right-invariant mean on ${\sf LUCB}(G)$, is based on the result of Rickert (Theorem 4.2 \cite{rickert}) that this coincides with the existence of fixed points for all continuous affine actions on non-empty compact convex subsets of  locally convex topological vector spaces. However, as advocated by V. Pestov in \cite{pestov}, there is another curious alternative of amenability, which codifies other interesting consequences of amenability of locally compact groups. A systematic study of this notion has been done by K. Juschenko and F. M. Schneider \cite{juschenko}.
\begin{defi}
A topological group $G$ is {\em skew-amenable} if there is a $\lambda$-invariant mean on ${\sf LUCB}(G)$,  that is, a positive linear functional $\go n\colon {\sf LUCB}(G)\to \R$ such that $\norm{\go n}=1$, $\go n(1) = 1$ and $\go n\big(\lambda(g)\phi\big)=\go n(\phi)$ for all $g\in G$ and $\phi\in {\sf LUCB}(G)$.
\end{defi}

Whereas amenability and skew amenability coincide in the categories of locally compact (see Theorem 2.2.1 \cite{greenleaf}) and balanced topological groups, they do not coincide in general. Indeed, the group ${\rm Aut}(\Q,<)$ of all order-preserving permutations of $\Q$ is amenable, but fails to be skew-amenable. Similarly, solving a problem of K. Juschenko, V. Pestov and F. M. Schneider, N. Ozawa showed that, if $\ku R$ denotes the hyperfinite $II_1$-factor, then the unitary group $\ku U(\ku R)$ is skew-amenable, but fails to be amenable. Thus, outside the category of locally compact groups, neither notion implies the other.

In the following, we shall briefly consider decompositions of isometric Banach $G$-modules for skew-amenable groups $G$. As can be seen, the consequences of skew amenability are rather different from those of amenability.

\begin{thm}\label{thm:alaoglu2}
Suppose $G$ is a skew-amenable topological group with $\lambda$-invariant mean $\go n$ and  $(V,\pi)$ is an isometric Banach $G$-module.  Then the mean operator $V\overset N\longrightarrow V^{**}$  maps into $(V^{**})^G$, whereas the the adjoint operator $V^{***}\maps {N^*} V^*$ is $G$-invariant, that is, $N^*\pi(g)^{***}=N^*$ for all $g\in G$. In particular, $V^G={\sf ker}\,(I-N)$.

It follows that
$$
N\inv(V)\;=\;\Mgd{v\in V}{Nv\in V^G\cap \ov{   \pi(\Delta G)v} ^{\norm\cdot}}\;=\; {\sf ker}\,N\oplus V^G
$$
and that $(V^*)^G$ and $(V^*)_G$ form a direct sum in $V^*$.
\end{thm}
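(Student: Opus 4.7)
The plan is to mirror the argument of Theorem~\ref{thm:alaoglu}, using the $\lambda$-invariance of $\go n$ in place of the $\rho$-invariance used there; the remaining assertions will then follow formally from Proposition~\ref{prop:meanoper}. The key bookkeeping identity is that replacing $\phi$ by $\pi^*(g\inv)\phi$ in the coefficient function $h\mapsto\phi(\pi(h)v)$ produces a $\lambda$-translate of it: indeed, $[\pi^*(g\inv)\phi](\pi(h)v)=\phi(\pi(gh)v)=\lambda(g\inv)[\phi(\pi(\cdot)v)](h)$.

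For the first assertion, I would check directly that $Nv\in(V^{**})^G$ for every $v\in V$. Using the formula $(\pi^{**}(g)w)(\phi)=w(\pi^*(g\inv)\phi)$ together with the identity above,
$$
\langle\phi,\pi^{**}(g)Nv\rangle=\go n\big([\pi^*(g\inv)\phi](\pi(\cdot)v)\big)=\go n\big(\lambda(g\inv)[\phi(\pi(\cdot)v)]\big)=\go n\big(\phi(\pi(\cdot)v)\big)=\langle\phi,Nv\rangle,
$$
where the middle equality is $\lambda$-invariance of $\go n$. Taking adjoints of $\pi^{**}(g)N=N$ yields $N^*\pi^{***}(g)=N^*$, which is the second assertion. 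The equality $V^G=\ker(I-N)$ then drops out: the inclusion $\subseteq$ is supplied by Proposition~\ref{prop:meanoper}, while if $Nv=v$ then $v=Nv\in V\cap(V^{**})^G=V^G$, using $G$-equivariance of the canonical isometric embedding $V\hookrightarrow V^{**}$.

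For the decomposition of $N\inv(V)$, I combine two ingredients already in place: Proposition~\ref{prop:meanoper} gives $Nv\in\ov{\pi(\Delta G)v}^{\norm\cdot}$ whenever $Nv\in V$, while part~(1) additionally forces $Nv\in V^G$ in that situation. Writing $v=(v-Nv)+Nv$ for $v\in N\inv(V)$, the second summand lies in $V^G$ by the previous sentence, and since $N$ restricts to the identity on $V^G$ one has $N(v-Nv)=Nv-Nv=0$, placing the first summand in $\ker N$. The intersection $\ker N\cap V^G$ is trivially $\{0\}$, giving the announced direct sum.

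Finally, for $(V^*)^G\cap(V^*)_G=\{0\}$, Proposition~\ref{prop:meanoper} yields $N^*\phi=\phi$ for every $\phi\in(V^*)^G$, while part~(2) combined with the natural inclusion $V^*\subseteq V^{***}$ gives $N^*(\phi-\pi^*(g)\phi)=N^*\phi-N^*\pi^{***}(g)\phi=0$ for all $\phi\in V^*$ and $g\in G$; continuity and linearity of $N^*$ then force $N^*=0$ on $(V^*)_G$. Any element of the intersection is therefore simultaneously fixed and annihilated by $N^*$, hence is zero. I do not anticipate any serious obstacle here: the machinery of Section~\ref{decompo} transfers almost verbatim, and the sole care required is in tracking which side of the convolution involves $\lambda$ versus $\rho$ when manipulating the various adjoint actions.
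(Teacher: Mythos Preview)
Your proposal is correct and follows essentially the same route as the paper: the central computation $\langle\phi,\pi(g)^{**}Nv\rangle=\go n\big(\lambda(g\inv)[\phi(\pi(\cdot)v)]\big)=\langle\phi,Nv\rangle$ is identical, and the remaining deductions from Proposition~\ref{prop:meanoper} (the projection decomposition of $N^{-1}(V)$ and the direct sum $(V^*)^G\oplus(V^*)_G$ via $\ker(I-N^*)$ and $\ker N^*$) match the paper's argument almost verbatim. Your verification of $\ker(I-N)\subseteq V^G$ via $V\cap(V^{**})^G=V^G$ is slightly more direct than the paper's, which instead routes through the observation that $N$ is a projection of $N^{-1}(V)$ onto $V^G$, but the two are equivalent.
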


\begin{proof}
Observe first that, for all $v\in V$, $\phi\in V^*$ and $g\in G$, 
\maths{
\langle \phi,\pi(g)^{**}Nv\rangle
&=\langle N^*\pi(g)^{*} \phi,v\rangle\\
&=\langle\pi(g)^{*} \phi,Nv\rangle\\
&=\go n\big(\phi(\pi(g\,\cdot)v)\big)\\
&=\go n\big(\lambda(g\inv)\big(\phi(\pi(\cdot)v)\big)\big)\\
&=\go n\big(\phi(\pi(\cdot)v)\big)\\
&=\langle \phi,Nv\rangle.
}
which shows that $\pi(g)^{**}N=N$ and $N^*\pi(g)^*=N^*$. In particular, ${\sf im}\, N\subseteq (V^{**})^G$ and hence, by Proposition \ref{prop:meanoper},
$$
N\inv(V)\;=\;\Mgd{v\in V}{Nv\in V^G\cap \ov{   \pi(\Delta G)v} ^{\norm\cdot}}.
$$

It follows that 
$$
N\inv(V)\maps N V^G
$$ 
and so, since by Proposition \ref{prop:meanoper} $N=I$ on $V^G$, we see that $N$ is a projection of $N\inv(V)$ onto $V^G$. As also $V^G\subseteq {\sf ker}\,(I-N)\subseteq N\inv(V)$, we find that $V^G={\sf ker}\,(I-N)$ and $N\inv(V)={\sf ker}\,N\oplus V^G$.

By the $G$-invariance of $N^*$, we have  $(V^*)_G\subseteq {\sf ker}\,N^*$. And, by Proposition \ref{prop:meanoper}, $
(V^*)^G\subseteq {\sf ker}(I-N^*)$.
As  ${\sf ker}\,N^*$ and ${\sf ker}(I-N^*)$ form a direct sum, so do the subspaces $(V^*)_G$ and $(V^*)^G$.
\end{proof}

\begin{thm}\label{G-equivariant}
Let $G$ be a topological group and $(V,\pi)$ an isometric Banach $G$-module. Assume also that $G$ is both amenable and skew-amenable. Then there exists a $G$-equivariant projection
$$
V^*\maps P(V^*)^G.
$$
\end{thm}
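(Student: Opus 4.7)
The plan is to combine the amenability mean operator $M$ of Theorem \ref{thm:alaoglu} (associated to a $\rho$-invariant mean $\go m$) with the skew-amenability mean operator $N$ of Theorem \ref{thm:alaoglu2} (associated to a $\lambda$-invariant mean $\go n$), so that the two complementary invariance properties deliver the two ingredients we need: landing in $(V^*)^G$ and equivariance. Writing $\iota\colon V^*\hookrightarrow V^{***}$ for the canonical embedding, I would set
\[
P=M^*\circ \iota\circ N^*\circ \iota\colon V^*\longrightarrow V^*,
\]
or explicitly
\[
(P\phi)(v)=\go m_g\Bigl(\go n_h\bigl(\phi(\pi(hg)v)\bigr)\Bigr), \qquad \phi\in V^*,\; v\in V.
\]
This makes sense because $g\mapsto \go n_h(\phi(\pi(hg)v))=(N^*\phi)(\pi(g)v)$ is the coefficient function of $N^*\phi\in V^*$ at $v$, hence lies in $\sf LUCB(G)$, so $\go m$ can be applied to it.

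The remainder is a short sequence of direct verifications. First, $\Norm P\leqslant \Norm{M^*}\Norm{N^*}\leqslant 1$ by Proposition \ref{prop:meanoper}. Next, $P$ lands in $(V^*)^G$: for any $k\in G$, one has $(P\phi)(\pi(k)v)=\go m_g\,\go n_h\bigl(\phi(\pi(hgk)v)\bigr)=\go m\bigl(\rho(k)F\bigr)$ with $F(g)=\go n_h\bigl(\phi(\pi(hg)v)\bigr)$, which by $\rho$-invariance of $\go m$ equals $\go m(F)=(P\phi)(v)$. Then $P$ fixes every element of $(V^*)^G$, because if $\phi\circ \pi(g)=\phi$ for all $g$ then the double integrand is the constant $\phi(v)$. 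Finally, $P\pi^*(k)=P$: replacing $\phi$ by $\pi^*(k)\phi=\phi\circ\pi(k\inv)$ turns the inner integrand into $\phi(\pi(k\inv hg)v)=(\lambda(k)G_g)(h)$ with $G_g(h)=\phi(\pi(hg)v)$, and $\lambda$-invariance of $\go n$ absorbs the translation, leaving $(P\pi^*(k)\phi)(v)=(P\phi)(v)$.

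The second and third verifications together imply $P^2=P$ with range exactly $(V^*)^G$, while the second and fourth combine to give $P\pi^*(k)=P=\pi^*(k)P$ for all $k$, which is the desired $G$-equivariance. The only genuine technical point is the well-posedness of the iterated mean, which as noted reduces to the observation that the coefficient function $(N^*\phi)(\pi(\cdot)v)$ lies in $\sf LUCB(G)$; beyond this, each step is an essentially one-line manipulation exploiting precisely one of the two invariance hypotheses on $\go m$ or $\go n$.
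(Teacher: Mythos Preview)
Your proposal is correct and follows essentially the same approach as the paper: both construct $P$ as the composite $M^*N^*$ (restricted to $V^*\subseteq V^{***}$, which is exactly your $M^*\circ\iota\circ N^*\circ\iota$), and both establish the projection and equivariance properties from the $\rho$-invariance of $\go m$ and the $\lambda$-invariance of $\go n$. The only cosmetic difference is that the paper cites the abstract conclusions of Theorems~\ref{thm:alaoglu} and~\ref{thm:alaoglu2} for the needed properties of $M^*$ and $N^*$, whereas you unwind the explicit double-mean formula and verify each invariance by hand.
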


\begin{proof}
Because $G$ is both amenable and skew-amenable, we may define the two mean operators $V\maps{M,N}V^{**}$. Consider now the restriction of the adjoints to the subspace $V^*\subseteq V^{***}$,
$$
V^*\maps {N^*}V^*\maps{M^*}V^*.
$$
By Theorem \ref{thm:alaoglu2}, $N^*$ acts as the identity on $(V^*)^G$ and is $G$-invariant, i.e., $N^*\pi(g)^*=N^*$ for all $g\in G$. Similarly, by Theorem \ref{thm:alaoglu}, $M^*$ is a projection onto $(V^*)^G$. It thus follows that the composition $P=M^*N^*$ is $G$-invariant and acts as the identity on $(V^*)^G$. Furthermore, as ${\sf im}\, P=(V^*)^G$, we find that $P^2=P$. So $P$ is a $G$-invariant projection onto $(V^*)^G$ and therefore
$$
\pi(g)^*P=P=P\pi(g)^*.
$$
So $P$ is also $G$-equivariant.
\end{proof}

As the properties of amenability and skew amenability coincide in the two categories of locally compact and balanced topological groups, Theorem \ref{G-equivariant} in particular applies to amenable locally compact and to balanced topological groups. However, as the next example shows, it has a wider range of applications.

\begin{exa}[An amenable and skew-amenable Polish group that is neither locally compact nor balanced]
Let $\Gamma$ be a denumerable locally finite group and consider the semidirect product 
$$
G=\Gamma\ltimes \Z^\Gamma,
$$
induced by the right shift action $\Gamma\curvearrowright \Z^\Gamma$, that is, $(x^\gamma)_\sigma=x_{\sigma\gamma}$.
Thus, the multiplication in $G$ is given by
$$
(\gamma,x)\cdot(\sigma,y)=(\gamma\sigma, x\cdot y^\gamma)
$$
for $\gamma,\sigma\in \Gamma$ and $x,y\in \Z^\Gamma$. Because $\Gamma$ acts continuously on $\Z^\Gamma$, $G$ is a Polish topological group when equipped with the product topology of $\Gamma$ and $\Z^\Gamma$. Since the open subgroup $\Z^\Gamma$ is not locally compact, neither is $G$.  Also, if $x\in \Z^\Gamma$ is defined as 
$$
x_\gamma=\begin{cases}
1 &\text{if }\gamma=1\\
0&\text{if }\gamma\neq1,
\end{cases}
$$
then there is a sequence of conjugates of the non-identity element $(1,x)$ tending to the identity in $G$. Hence, $G$ is not balanced.

Let now $\Gamma_1\leqslant \Gamma_2\leqslant \ldots$ be an exhaustive chain of finite subgroups of $\Gamma$. Then the
$$
G_n=\Gamma_n\ltimes \Z^\Gamma=\{(\gamma,x)\in \Gamma\ltimes \Z^\Gamma\del \gamma\in \Gamma_n\}
$$
form an exhaustive chain of open subgroups of $G=\Gamma\ltimes \Z^\Gamma$. Furthermore, the subsets of the form
$$
V_{F,n}=\{(1,x)\in \Gamma_n\ltimes \Z^\Gamma\del x_\sigma=0 \text{ for all }\sigma\in F\Gamma_n\},
$$
where $F\subseteq \Gamma$ is any finite set, form a neighbourhood basis at the identity in the subgroup $\Gamma_n\ltimes \Z^\Gamma$ consisting of conjugacy invariant sets. It thus follows that each $G_n=\Gamma_n\ltimes \Z^\Gamma$ is balanced. 

Furthermore, as $\Z^\Gamma$ is amenable and has finite index in $\Gamma_n\ltimes \Z^\Gamma$, we see that $\Gamma_n\ltimes \Z^\Gamma$ is amenable. It thus follows that the union
$$
\Gamma\ltimes \Z^\Gamma=\bigcup_{n=1}^\infty \Gamma_n\ltimes \Z^\Gamma
$$
is amenable.

Since each $\Gamma_n\ltimes \Z^\Gamma$ is balanced and amenable, it is also skew-amenable. Thus, to see that $\Gamma\ltimes \Z^\Gamma$ is skew-amenable, it suffices to note that, by \cite[Lemma 6.4]{juschenko}, if $G$ is a topological group that can be written as the union of a chain $G_1\leqslant G_2\leqslant $ of skew-amenable subgroups, then $G$ is skew-amenable. 
\end{exa}



\section{Affine isometric actions and cohomology}\label{sec:affine}
We  recall a few facts about affine isometric group actions on Banach spaces. Namely, suppose $G$ is a topological group and $G\overset\alpha\curvearrowright V$ is a continuous  action by affine isometries on a Banach space $V$. Then there is a unique continuous isometric linear action $G\overset\pi\curvearrowright V$ and a  continuous map $G\overset b\longrightarrow V$ satisfying
\begin{equation}\label{cocycle}
\alpha(g)v=\pi(g)v+b(g)
\end{equation}
for all $g\in G$ and $v\in V$. Conversely, if $(V,\pi)$ is an isometric Banach $G$-module, we denote by $Z^1(G,\pi)$ the vector space of all continuous maps $G\overset b\longrightarrow V$  satisfying the above formula (\ref{cocycle}) defines an action $\alpha$ of $G$ by affine isometries on $V$. Specifically, $Z^1(G,\pi)$ may be described as the collection of all continuous $b$ satisfying the cocycle equation 
$$
b(gf)=\pi(g)b(f)+b(g)
$$
for all $g,f\in G$. Elements of $Z^1(G,\pi)$ are {\em cocycles} associated with $\pi$. Observe that $Z^1(G,\pi)$ is closed under taking linear combinations and thus is a vector space.

We now define the {\em coboundary operator} $V\maps\partial Z^1(G,\pi)$ by the formula
$$
(\partial v)(g)=v-\pi(g)v
$$
for all $v\in V$ and $g\in G$. That $\partial v\in Z^1(G,\pi)$ follows from the computation
$$
(\partial v)(gf)=v-\pi(gf)v=  \pi(g)\big(v-\pi(f)v\big)+\big(v-\pi(g)v\big)=\pi(g)(\partial v)(f) +(\partial v)(g).
$$
Cocycles of the form $\partial v$ are {\em coboundaries} and $B^1(G,\pi)={\sf im}\, \partial$ denotes the subspace of these. The quotient vector space 
$$
H^1(G,\pi)= Z^1(G,\pi)/  {B^1(G,\pi)}
$$
is the {\em $1$-cohomology} of the Banach $G$-module $(V,\pi)$. 

Each element  $b\in Z^1(G,\pi)$ is a map from $G$ to $V$ and thus  $Z^1(G,\pi)$ can be viewed as a subset of $\prod_{g\in G}V$. Equipping the latter with the Tychonoff product topology and  $Z^1(G,\pi)$ with the subspace topology,  $Z^1(G,\pi)$ becomes a locally convex topological vector space. The topological closure $\ov {B^1(G,\pi)}$ of $B^1(G,\pi)$ in  $Z^1(G,\pi)$ is then a closed linear subspace and the quotient
$$
\ov H^1(G,\pi)= Z^1(G,\pi)/ \ov {B^1(G,\pi)}
$$
a topological vector space. Elements of $\ov {B^1(G,\pi)}$ are called {\em almost coboundaries} and $\ov H^1(G,\pi)$ is termed the {\em reduced $1$-cohomology} of the Banach $G$-module $(V,\pi)$. By $\ov b$, we denote the image of a cocycle $b\in Z^1(G,\pi)$ in $\ov H^1(G,\pi)$.

Observe that, if $G\overset\alpha\curvearrowright V$ is the affine isometric action associated with a cocycle $b\in Z^1(G,\pi)$, then, for any $v\in V$ and $g\in G$, we have 
$$
\Norm{v-\alpha(g)v}=\Norm{(\partial v)(g)-b(g)}.
$$
This implies that the point $v\in V$ is fixed under the action $G\overset\alpha\curvearrowright V$ if and only if $b=\partial v$. Thus, coboundaries correspond exactly to affine actions with fixed points. Similarly, the action has {\em almost fixed points}, meaning that, for all $g_1,\ldots, g_n\in G$ and $\eps>0$, there exists $v\in V$  such that $\max_i\norm{v-\alpha(g_i)v}<\eps$, if and only if $b\in \ov {B^1(G,\pi)}$.

If $G$ is any topological group, we denote by 
$$
{\sf Hom}(G,\R)
$$
the vector space of continuous group homomorphisms $G\to \R$. 
\begin{lemme}\label{operator Q}
Let $G$ be a topological group and $(V,\pi)$ be an isometric Banach $G$-module. 
Then we may define a linear operator 
$$
\ov H^1(G,\pi)\otimes (V^*)^G \overset {Q}\longrightarrow {\sf Hom}(G,\R)
$$
by setting 
$$
Q({\ov b}\otimes \phi)=\phi\circ b
$$
for all $b\in Z^1(G,\pi)$ and $\phi\in (V^*)^G$.
\end{lemme}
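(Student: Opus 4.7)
The plan is to build $Q$ from a bilinear pairing $Z^1(G,\pi) \times (V^*)^G \to {\sf Hom}(G,\R)$ sending $(b,\phi)$ to $\phi\circ b$, and then argue that it descends to the tensor product of the reduced cohomology with $(V^*)^G$.

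First I would verify that $\phi\circ b$ genuinely lands in ${\sf Hom}(G,\R)$. Using the cocycle identity and the $G$-invariance of $\phi$ (so that $\phi\circ\pi(g)=\phi$), one computes
$$
\phi\big(b(gf)\big)=\phi\big(\pi(g)b(f)+b(g)\big)=\phi\big(b(f)\big)+\phi\big(b(g)\big),
$$
so $\phi\circ b$ is a homomorphism, and continuity follows since both $b$ and $\phi$ are continuous. Bilinearity in $(b,\phi)$ is immediate from the linearity of each variable in $\phi\circ b$, giving a linear map $Z^1(G,\pi)\otimes (V^*)^G \to {\sf Hom}(G,\R)$.

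The main step is to show that this map factors through $\ov H^1(G,\pi)\otimes (V^*)^G$, i.e., that $\phi\circ b=0$ whenever $b\in \ov{B^1(G,\pi)}$. For coboundaries this is direct: if $b=\partial v$, then
$$
\phi\big(b(g)\big)=\phi(v)-\phi(\pi(g)v)=0
$$
by $G$-invariance of $\phi$. For the closure, recall that $Z^1(G,\pi)$ carries the Tychonoff product topology, so a net $b_\alpha \to b$ in $Z^1(G,\pi)$ means $b_\alpha(g)\to b(g)$ in norm for every $g\in G$. Since $\phi$ is norm-continuous on $V$, $\phi(b_\alpha(g))\to \phi(b(g))$, and as each $\phi(b_\alpha(g))=0$, we conclude $\phi(b(g))=0$ for every $g\in G$, hence $\phi\circ b\equiv 0$.

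Having established that $(b,\phi)\mapsto \phi\circ b$ is a bilinear map vanishing on $\ov{B^1(G,\pi)}\times (V^*)^G$, the universal property of the tensor product then yields the desired linear operator
$$
\ov H^1(G,\pi)\otimes (V^*)^G \maps Q {\sf Hom}(G,\R),\qquad Q(\ov b\otimes \phi)=\phi\circ b.
$$
No step presents a serious obstacle; the only point requiring any care is passing from coboundaries to almost coboundaries, which hinges on the fact that the product topology used to define $\ov {B^1(G,\pi)}$ is nothing more than pointwise norm convergence and hence interacts cleanly with the norm-continuous functional $\phi$.
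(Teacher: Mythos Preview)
Your proof is correct and follows essentially the same approach as the paper's: verify that $\phi\circ b$ is a continuous homomorphism via the cocycle identity and $G$-invariance of $\phi$, then show vanishing on $\ov{B^1(G,\pi)}$ by approximating $b(g)$ pointwise by coboundaries and using norm continuity of $\phi$. The only cosmetic difference is that the paper carries out the closure step with a single $\eps$-approximation rather than a net, but the content is identical.
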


\begin{proof}
Let us first verify that the map 
$$
( b,\phi)\in Z^1(G,\pi)\times (V^*)^G \mapsto \phi\circ b \in {\sf Hom}(G,\R)
$$
is well-defined and bilinear. So suppose that $b\in Z^1(G,\pi)$ and $\phi\in (V^*)^G=(V_G)^\perp$. Observe first that $G\maps b V \maps \phi \R$, whence the composition $\phi\circ b$ is well-defined. Also, 
$$
\big(\phi\circ b\big)(gf)
=\phi\big(\pi(g)b(f)\big)+\phi\big(b(g)\big)
=\phi\big(b(f))+\phi\big(b(g)\big)=\big(\phi\circ b\big)(f)+\big(\phi\circ b\big)(g),
$$
so $\phi\circ b\in {\sf Hom}(G,\R)$. Bilinearity is obvious.

Observe also that, if $b\in  \ov {B^1(G,\pi)}$, $g\in G$ and $\eps>0$, we may pick some $v\in V$ so that
$$
\norm{b(g)-\partial v(g)}<\eps,
$$
whereby
\maths{
\big|(\phi\circ b)(g)\big|
&\leqslant \big|\phi\big(b(g)-\partial v(g)\big)\big|+\big|\phi\big(\partial v(g)\big)\big|\\
&\leqslant \norm \phi\cdot  \Norm{b(g)-\partial v(g)}+\big|\phi\big(v-\pi(g)v\big)\big|\\
&<\norm \phi\cdot\eps+0.
}
Because $\eps>0$ is arbitrary, we find that $\phi\circ b=0$. It thus follows that the bilinear map above factors through to a bilinear map 
$$
({\ov b},\phi)\in \ov H^1(G,\pi)\times (V^*)^G \mapsto \phi\circ b \in {\sf Hom}(G,\R),
$$
which, by the defining properties of the tensor product is the same as a linear operator 
$$
\ov H^1(G,\pi)\otimes (V^*)^G \overset {Q}\longrightarrow {\sf Hom}(G,\R).
$$
\end{proof}

\begin{lemme}\label{V_G invariant}
Suppose that $G$ is a topological group with ${\sf Hom}(G,\R)=\{0\}$ and that $(V,\pi)$ is an isometric Banach $G$-module. Then the subspace $V_G$ is invariant under all continuous affine isometric actions $G\overset\alpha\curvearrowright V$ with linear part $\pi$.
\end{lemme}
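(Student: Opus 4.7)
The plan is to combine the machinery of Lemma \ref{operator Q} with the annihilator duality from Lemma \ref{M13}. Given any continuous affine isometric action $G\overset\alpha\curvearrowright V$ with linear part $\pi$ and associated cocycle $b\in Z^1(G,\pi)$, my goal is to show that $b(g)\in V_G$ for every $g\in G$. Once this is in hand, invariance of $V_G$ under $\alpha$ is automatic: $V_G$ is $\pi$-invariant (noted right after its definition in Section \ref{mean-operators}), so for $v\in V_G$ we have $\alpha(g)v=\pi(g)v+b(g)\in V_G+V_G=V_G$.

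First, I would apply Lemma \ref{operator Q} to produce the linear operator
$$
\ov H^1(G,\pi)\otimes (V^*)^G \overset{Q}\longrightarrow {\sf Hom}(G,\R).
$$
Since ${\sf Hom}(G,\R)=\{0\}$ by hypothesis, the operator $Q$ is identically zero. Unpacking the definition $Q(\ov b\otimes\phi)=\phi\circ b$ on simple tensors, this forces $\phi\circ b=0$ for every cocycle $b\in Z^1(G,\pi)$ and every $G$-invariant functional $\phi\in (V^*)^G$. Concretely, $\phi(b(g))=0$ for all $g\in G$.

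Next, I would invoke Lemma \ref{M13}, which identifies $(V^*)^G=(V_G)^\perp$. Since $V_G$ is norm-closed in $V$ by construction, the Hahn--Banach theorem gives the reverse annihilator identity $((V_G)^\perp)_\perp = V_G$, that is, $((V^*)^G)_\perp=V_G$. Combining this with the previous step shows that $b(g)\in V_G$ for every $g\in G$, which is precisely what was needed to close the argument as described above.

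I do not anticipate any serious obstacle, since the content has been effectively prepared in Lemmas \ref{M13} and \ref{operator Q}; the only small point to be careful about is the passage from $(V^*)^G=(V_G)^\perp$ to $V_G=((V^*)^G)_\perp$, which uses that $V_G$ is already norm-closed and hence weakly closed.
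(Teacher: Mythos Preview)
Your proposal is correct and follows essentially the same approach as the paper: both use Lemma \ref{operator Q} together with ${\sf Hom}(G,\R)=\{0\}$ to conclude $\phi\circ b=0$ for all $\phi\in (V^*)^G=(V_G)^\perp$, whence ${\sf im}\,b\subseteq V_G$, and then combine this with the $\pi$-invariance of $V_G$. Your version just spells out the Hahn--Banach step $((V_G)^\perp)_\perp=V_G$ a bit more explicitly than the paper does.
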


\begin{proof}
Suppose $b\in Z^1(G,\pi)$ is the cocycle defining $\alpha$ from $\pi$. Then by Lemma \ref{operator Q} we see that $\phi\circ b=0$ for all $\phi\in (V^*)^G=(V_G)^\perp$, whereby ${\sf im}\,b\subseteq V_G$. As $V_G$ is $\pi$-invariant, it follows that 
$$
\alpha(g)v=\pi(g)v+b(g)\in V_G
$$
for all $v\in V_G$ and $g\in G$.
\end{proof}

\begin{lemme}\label{dim bound}
For all $\psi\in {\sf Hom}(G,V/V_G)$, we have
\begin{equation}\label{dim bdd}
{\sf dim}\;\ov{\sf span}\,\psi[G]\leqslant {\sf dim}\;{\sf Hom}(G,\R).
\end{equation}
\end{lemme}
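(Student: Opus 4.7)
The plan is a one-line duality argument: pre-compose the Banach dual $(V/V_G)^*$ with $\psi$ to land in ${\sf Hom}(G,\R)$, and use Hahn--Banach to transfer any linearly independent family from $W := \ov{\sf span}\,\psi[G]$ into a linearly independent family in ${\sf Hom}(G,\R)$.

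First, I would observe that the assignment
$$
T \colon (V/V_G)^* \longrightarrow {\sf Hom}(G,\R), \qquad T(\phi) = \phi\circ\psi,
$$
is a well-defined linear map. Indeed, since $\psi$ is a continuous additive map and each $\phi$ is continuous and linear, the composition $\phi\circ\psi$ is a continuous group homomorphism $G\to\R$. The key structural point is not that $T$ is injective (in general it is not) but that it still separates the relevant subfamilies.

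Second, given any linearly independent family $w_1,\ldots,w_N \in W$, I would apply Hahn--Banach to the finite-dimensional subspace $F={\sf span}(w_1,\ldots,w_N)\subseteq V/V_G$ to obtain functionals $\phi_1,\ldots,\phi_N \in (V/V_G)^*$ biorthogonal to the $w_j$, i.e.\ $\phi_i(w_j) = \delta_{ij}$. The images $h_i := T(\phi_i) = \phi_i\circ\psi$ are then linearly independent in ${\sf Hom}(G,\R)$: any relation $\sum_i c_i h_i = 0$ means $\sum_i c_i\phi_i$ vanishes on $\psi[G]$, hence by continuity and linearity on $W=\ov{\sf span}\,\psi[G]$, and evaluation at $w_j$ forces $c_j = 0$. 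Thus every linearly independent family in $W$ has cardinality at most $\dim {\sf Hom}(G,\R)$, giving the desired estimate uniformly in both the finite and infinite-dimensional cases.

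I do not foresee any real obstacle. The only point worth explicitly verifying is that $\phi\circ\psi$ lies in ${\sf Hom}(G,\R)$, which is immediate from the additivity of both $\psi$ and $\phi$; everything else is a direct application of Hahn--Banach and the defining properties of the closed linear span.
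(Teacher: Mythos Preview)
Your proof is correct and follows essentially the same approach as the paper: produce biorthogonal functionals via Hahn--Banach and compose with $\psi$ to obtain linearly independent elements of ${\sf Hom}(G,\R)$. The only cosmetic difference is that the paper selects its linearly independent vectors directly from $\psi[G]$ (as $\psi(g_1),\ldots,\psi(g_n)$), so the independence of the $\phi_i\circ\psi$ is witnessed by evaluating at the $g_j$ themselves, whereas you allow arbitrary $w_j\in \ov{\sf span}\,\psi[G]$ and then pass through density and continuity; both arrive at the same conclusion.
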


\begin{proof}
Assume that  $g_1,\ldots, g_n\in G$ are chosen such that the vectors $\psi(g_1), \ldots, \psi(g_n)$ are linearly independent in $V/V_G$. Then we may find functionals  $\phi_1,\ldots, \phi_n\in \big(V/V_G\big)^*$  satisfying 
$$
\langle \psi(g_j),\phi_i\rangle=\delta_{ij},
$$
whereby the group homomorphisms 
$$
\langle \psi(\cdot ), \phi_1\rangle,\ldots, \langle \psi(\cdot ),  \phi_n\rangle
$$ 
are linearly independent elements in ${\sf Hom}(G,\R)$.
\end{proof}

\begin{rem}[Affine actions of $\Delta G$]
If $(V,\pi)$ is a Banach $G$-module, we let $\R G\overset\pi\longrightarrow \ku L(V)$ be the unique extension to a representation of the group algebra. Suppose also that $b\in Z^1(G,\pi)$ is a cocycle and extend $b$ linearly to a map $\R G\overset b\longrightarrow V$. We then note that for $\sigma=\sum_i\sigma_ig_i\in \Delta G$ and $\tau=\sum_j\tau_jf_j\in \Delta G$, we have
\maths{
b(\sigma\tau)
&=b\Big(\sum_i\sigma_ig\cdot \sum_j\tau_jf_j\Big)\\
&=\sum_i\sum_j\sigma_i \tau_jb(g_if_j)\\
&=\sum_i\sum_j\sigma_i \tau_j\big(\pi(g_i)b(f_j)+b(g_i)\big)\\
&=\sum_i\sigma_i \Big(\pi(g_i)\Big(\sum_j\tau_jb(f_j)\Big)+\sum_j\tau_jb(g_i)\Big)\\
&=\sum_i\sigma_i \Big(\pi(g_i)b(\tau)+b(g_i)\Big)\\
&=\pi(\sigma)b(\tau)+b(\sigma).
}
It thus follows that the formula
$$
\alpha(\sigma)v=\pi(\sigma)v+b(\sigma)
$$
defines an action $\Delta G\overset \alpha\curvearrowright V$ of the semigroup $\Delta G$ by affine transformations on $V$. Observe further that, because, for $\lambda_i>0$ with $\sum_i\lambda_i=1$ and $g_i\in G$,
$$
\alpha\big(\sum_i\lambda_ig_i\big)v=\sum_i\lambda_i\alpha(g_i)v,
$$
we have that
$$
{\sf conv}\,\{\alpha(g)v\del g\in G\}=\{\alpha(\beta)v\del \beta\in \Delta G\}
$$
for all $v\in V$. 
\end{rem}

\begin{thm}\label{affine}
Suppose that $G$ is an amenable topological group with ${\sf Hom}(G,\R)=\{0\}$ and that $(V,\pi)$ is an isometric Banach $G$-module. Then the subspace $V_G$ is invariant under all continuous affine isometric actions $G\overset\alpha\curvearrowright V$ with linear part $\pi$. Moreover, for any finite subset $F\subseteq V_G$, 
$$
\inf_{\beta\in \Delta G}{\sf diam}\,\{\alpha(\beta)v\del v\in F\}=0.
$$
\end{thm}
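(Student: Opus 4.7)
The invariance of $V_G$ under the affine action $\alpha$ is precisely the content of Lemma \ref{V_G invariant}, which we have already established: under the hypothesis ${\sf Hom}(G,\R)=\{0\}$, the operator $Q$ of Lemma \ref{operator Q} is the zero operator, so every cocycle $b\in Z^1(G,\pi)$ annihilates $(V^*)^G=(V_G)^\perp$, forcing ${\sf im}\,b\subseteq V_G$. Combined with $\pi$-invariance of $V_G$, this gives $\alpha(g)V_G\subseteq V_G$ for every $g\in G$.

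For the diameter statement, my plan is to reduce the claim to Theorem \ref{thm:alaoglu} via a simple translation trick. The key observation is that, by linearly extending $b$ to $\Delta G$ as in the remark preceding the theorem, we have for all $v,w\in V$ and $\beta\in \Delta G$
\[
\alpha(\beta)v-\alpha(\beta)w=\big(\pi(\beta)v+b(\beta)\big)-\big(\pi(\beta)w+b(\beta)\big)=\pi(\beta)(v-w),
\]
so that the cocycle term cancels entirely and only the linear part of $\alpha$ contributes to the differences.

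Now let $F\subseteq V_G$ be finite and set
\[
F'=\{v-w\mid v,w\in F\}.
\]
Since $V_G$ is a linear subspace, $F'$ is a finite subset of $V_G$. Applying the final clause of Theorem \ref{thm:alaoglu} to $F'$, for every $\eps>0$ there is some $\beta\in \Delta G$ with
\[
\max_{u\in F'}\Norm{\pi(\beta)u}<\eps.
\]
By the identity above, this precisely says that ${\sf diam}\,\{\alpha(\beta)v\mid v\in F\}<\eps$, which yields the desired infimum equal to $0$.

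I do not anticipate any real obstacle here, since the whole argument is a one-line reduction to Theorem \ref{thm:alaoglu} once one notices the cancellation of the cocycle term in the differences $\alpha(\beta)v-\alpha(\beta)w$. The only subtle point worth flagging explicitly in the write-up is that the affine $\Delta G$-action built from a convex combination $\beta=\sum_i\lambda_i g_i$ satisfies $\alpha(\beta)v=\sum_i\lambda_i\alpha(g_i)v$, so that $b(\beta)$ is independent of $v$; this was already recorded in the remark preceding the theorem and is what makes the cancellation legitimate.
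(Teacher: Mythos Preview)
Your proof is correct and follows essentially the same approach as the paper: both invoke Lemma \ref{V_G invariant} for the invariance and then use the cancellation $\alpha(\beta)v-\alpha(\beta)w=\pi(\beta)(v-w)$ to reduce the diameter claim to Theorem \ref{thm:alaoglu}. The only cosmetic difference is that the paper applies Theorem \ref{thm:alaoglu} to $F$ itself (obtaining $\norm{\pi(\beta)v}<\eps/2$ for each $v\in F$ and then using the triangle inequality), whereas you apply it directly to the finite set of differences $F'=\{v-w\mid v,w\in F\}$.
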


\begin{proof}
Let $G\overset\alpha\curvearrowright V$ be a continuous affine isometric action with linear part $\pi$ and cocycle $b\in Z^1(G,\pi)$. By Lemma \ref{V_G invariant}, $V_G$ is $\alpha$-invariant. Moreover, if $F\subseteq V_G$ is any finite set and $\eps>0$, it follows from Theorem \ref{thm:alaoglu} that there is some $\beta\in \Delta G$ so that $\norm{\pi(\beta)v}<\eps/2$ for all $v\in F$. Thus,
\maths{
\Norm{\alpha(\beta)v-\alpha(\beta)w}
&=\Norm{\big(\pi(\beta)v+b(\beta)\big)-\big(\pi(\beta)w+b(\beta)\big)}\\
&=\norm{\pi(\beta)v-\pi(\beta)w}\\
&<\eps
}
for all $v,w\in F$.
\end{proof}

Furthermore, under the hypotheses of Theorem \ref{affine}, we find that any finite collection of $\alpha$-invariant convex subsets of $V_G$ almost intersect. More precisely, we have the following corollary.
\begin{cor}
For any finite collection $C_1,C_2,\ldots, C_n$ of non-empty convex $\alpha$-invariant subsets of $V_G$ and any $\eps>0$, there is some $v\in V_G$ so that 
$$
\max_i\;{\sf dist}(v,C_i)<\eps.
$$
\end{cor}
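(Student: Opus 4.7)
The plan is to reduce the statement directly to Theorem \ref{affine} by producing, from one element per $C_i$, a tuple of elements inside the $C_i$'s that are pairwise $\eps$-close, and then picking any of them as the desired $v$.

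First, for each $i=1,\ldots,n$, choose some point $v_i\in C_i\subseteq V_G$ and consider the finite set $F=\{v_1,\ldots,v_n\}\subseteq V_G$. By Theorem \ref{affine}, there is some $\beta\in\Delta G$ with ${\sf diam}\,\{\alpha(\beta)v_i\del 1\leqslant i\leqslant n\}<\eps$. The key point is then that each $\alpha(\beta)v_i$ lies in $C_i$: by the remark preceding Theorem \ref{affine}, if $\beta=\sum_j\lambda_jg_j$ with $\lambda_j>0$ and $\sum_j\lambda_j=1$, then
\[
\alpha(\beta)v_i=\sum_j\lambda_j\,\alpha(g_j)v_i,
\]
which is a convex combination of elements of $C_i$ (using $\alpha$-invariance of $C_i$) and therefore lies in $C_i$ itself by convexity.

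Now set $v=\alpha(\beta)v_1\in C_1\subseteq V_G$. For each $i$, the element $\alpha(\beta)v_i$ belongs to $C_i$ and satisfies $\norm{v-\alpha(\beta)v_i}<\eps$, so
\[
{\sf dist}(v,C_i)\leqslant \Norm{v-\alpha(\beta)v_i}<\eps,
\]
which is the desired conclusion.

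There is no genuine obstacle here; the entire content of the corollary is already encoded in Theorem \ref{affine}. The only point requiring a moment of care is verifying that $\alpha(\beta)$ sends $C_i$ into itself, which relies on the convex-combination identity for $\alpha$ restricted to $\Delta G$ from the preceding remark, together with the convexity and $\alpha$-invariance hypotheses on $C_i$.
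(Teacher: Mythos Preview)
Your proof is correct and follows essentially the same approach as the paper's: pick $v_i\in C_i$, apply a suitable $\beta\in\Delta G$, and use that convexity and $\alpha$-invariance keep $\alpha(\beta)v_i=\sum_j\lambda_j\alpha(g_j)v_i$ inside $C_i$. The only cosmetic difference is that the paper invokes Theorem~\ref{thm:alaoglu} directly to make each $\pi(\beta)v_i$ small and then takes $v=\sum_j\lambda_j b(g_j)=b(\beta)$, whereas you invoke Theorem~\ref{affine} to make the $\alpha(\beta)v_i$ mutually close and take $v=\alpha(\beta)v_1$; since $\alpha(\beta)v_1=\pi(\beta)v_1+b(\beta)$, the two choices differ by a vector of norm $<\eps$ and the arguments are interchangeable.
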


\begin{proof}
Pick $v_i\in C_i$ and note that, because $v_i\in V_G$, there is some $\beta\in \Delta G$ so that $\norm{\pi(\beta)v_i}<\eps$ for all $i$. Writing $\beta=\sum_{j=1}^k\lambda_jg_j$ for some $g_j\in G$ and $\lambda_j>0$, we have that
$$
\sum_{j=1}^k\lambda_j\alpha(g_j)v_i=\sum_{j=1}^k\lambda_j\pi(g_j)v_i+\sum_{j=1}^k\lambda_jb(g_j)=\pi(\beta)v_i+\sum_{j=1}^k\lambda_jb(g_j)
$$
for all $i$.
Therefore, for $v=\sum_{j=1}^k\lambda_jb(g_j)\in V_G$, we find that
$$
{\sf dist}(v,C_i)\leqslant \Norm {v-\sum_{j=1}^k\lambda_j\alpha(g_j)v_i}=\norm{\pi(\beta)v_i}<\eps
$$
for all $i$.
\end{proof}

\begin{thm}\label{skew affine}
Let $G\overset\alpha\curvearrowright V$ be a continuous action by affine isometries by a skew-amenable topological group $G$ on a Banach space $V$.  Assume also that orbits have finite diameter. Then, for all $g_1,\ldots, g_n\in G$ and $\eps>0$, there is $v\in V$ so that $\max_i\norm{v-\alpha(g_i)v}<\eps$. That is, the action $\alpha$ almost fixes  points on $V$.
\end{thm}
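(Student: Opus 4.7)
The plan is to use the $\lambda$-invariant mean on ${\sf LUCB}(G)$ to construct a fixed point $w \in V^{**}$ for the canonical bidual extension of the affine action, and then transfer this fixed point back to $V$ as an approximate fixed point via Goldstine's theorem combined with the coincidence of weak and norm closures on convex sets.

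First, I would observe that orbits having finite diameter is equivalent to the cocycle $b \in Z^1(G,\pi)$ being bounded on $G$, since $\alpha(g)0 = b(g)$. Moreover, the cocycle identity $b(gh) = \pi(g)b(h) + b(g)$ together with the fact that $\pi(g)$ is an isometry gives $\norm{b(gh) - b(g)} = \norm{b(h)}$, so continuity of $b$ at $1 \in G$ implies that each coefficient function $\phi \circ b$, for $\phi \in V^*$, lies in ${\sf LUCB}(G)$. Fix a $\lambda$-invariant mean $\go n$ on ${\sf LUCB}(G)$ and define $w \in V^{**}$ by
$$
\langle \phi, w \rangle = \go n(\phi \circ b), \qquad \phi \in V^*.
$$

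The main computation is to verify that $\alpha(g)^{**}w = w$ for every $g \in G$, where $\alpha(g)^{**}v^{**} = \pi(g)^{**}v^{**} + b(g)$ is the canonical affine extension of $\alpha(g)$ to $V^{**}$. Rewriting the cocycle identity as $\pi(g_0)b(g) = b(g_0 g) - b(g_0)$ and exploiting the $\lambda$-invariance of $\go n$,
$$
\langle \phi, \pi(g_0)^{**}w \rangle = \go n_g\big(\phi(\pi(g_0)b(g))\big) = \go n_g\big(\phi(b(g_0 g))\big) - \phi(b(g_0)) = \langle \phi, w\rangle - \phi(b(g_0)),
$$
so that $(I - \pi(g_0)^{**})w = b(g_0)$ in $V^{**}$, equivalently $\alpha(g_0)^{**}w = w$.

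To descend to $V$, fix $g_1, \ldots, g_n \in G$ and $\eps > 0$ and consider the linear map $T \colon V \to V^n$ given by $Tv = (v - \pi(g_i)v)_{i=1}^n$. Goldstine's theorem yields a net $(v_\tau) \subseteq V$ converging to $w$ in the weak-$*$ topology of $V^{**}$, and testing against functionals in $V^*$ produces $(I - \pi(g_i))v_\tau \to (I - \pi(g_i)^{**})w = b(g_i)$ weakly in $V$ for each $i$. Since $T(V)$ is a linear, hence convex, subspace of $V^n$, its weak and norm closures coincide, so $(b(g_1), \ldots, b(g_n))$ belongs to the norm closure of $T(V)$, and any $v \in V$ approximating it within $\eps$ satisfies $\max_i \norm{v - \alpha(g_i)v} = \max_i \norm{(I-\pi(g_i))v - b(g_i)} < \eps$.

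The conceptual step is the fixed-point calculation in $V^{**}$; the care lies in tracking that it is precisely the $\lambda$-invariance of $\go n$ (rather than $\rho$-invariance) that neutralises the shift introduced by the cocycle identity, so that skew-amenability is exactly the hypothesis needed. The descent from a weak-$*$ fixed point in $V^{**}$ to approximate fixed points in $V$ is then a routine application of Mazur's theorem.
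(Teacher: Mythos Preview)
Your proof is correct, but it takes a different route from the paper's. The paper argues by direct Hahn--Banach separation: it forms the convex set $C=\{(v-\alpha(g_i)v)_{i=1}^n\mid v\in V\}\subseteq \prod_{i=1}^n V$, assumes $0\notin\overline{C}$, obtains separating functionals $\phi_1,\ldots,\phi_n\in V^*$ with $\sum_i\phi_i(v-\alpha(g_i)v)\geqslant\eps$ for all $v$, and then sets $\psi_i(f)=\phi_i(\alpha(f)0)\in{\sf LUCB}(G)$ to reach $\sum_i(\psi_i-\lambda(g_i^{-1})\psi_i)\geqslant\eps$, which the $\lambda$-invariant mean annihilates. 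You instead construct an explicit $\alpha^{**}$-fixed point $w\in V^{**}$ by integrating the bounded cocycle against the mean, and then descend to approximate fixed points in $V$ via Goldstine's theorem and Mazur's theorem applied to the linear image $T(V)$.

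The two arguments are closely related---both hinge on the identity $\pi(g_0)b(g)=b(g_0g)-b(g_0)$ and on the observation that $\lambda$-invariance (not $\rho$-invariance) is what kills the resulting left shift---but your version makes the barycentre in the bidual explicit, which is conceptually appealing and fits naturally with the mean-operator philosophy used elsewhere in the paper. The paper's version is marginally more elementary in that it avoids invoking Goldstine and keeps everything at the level of $V$ and $V^*$. Either way, the core computation is the same.
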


\begin{proof}
Fix $g_1,\ldots, g_n\in G$ and consider the $\ell_1$-sum of $n$ copies of $V$, that is, $\prod_{i=1}^nV$ with norm 
$$
\norm{(v_1,\ldots,v_n)}=\norm{v_1}+\cdots+\norm{v_n}
$$
and let
$$
C=\big\{\big(v-\alpha(g_i)v\big)_{i=1}^n\in \prod_{i=1}^nV\del v\in V\big\}.
$$
Because each $\alpha(g_i)$ is an affine map, $C$ is convex.

We claim that $0\in \ov C^{\norm\cdot}$. Indeed, if $0\notin \ov C^{\norm\cdot}$, we can by the Hahn--Banach separation theorem find some $\phi_1,\ldots,\phi_n\in V^*$ and $\eps>0$ so that
$$
\sum_i\phi_i\big(v-\alpha(g_i)v\big)\geqslant \eps
$$
for all $v\in V$. Because the $\alpha$-orbit of $0$ is bounded, it follows that the functions $G\maps{\psi_i} \R$ defined by
$$
\psi_i(f)=\phi_i\big(\alpha(f)0\big)
$$
are left-uniformly continuous and bounded, that is $\psi_i\in {\sf LUCB}(G)$. Furthermore,
$$
\sum_i\big(\psi_i-\lambda(g_i\inv)\psi_i\big)\geqslant \eps.
$$
Thus, if $\go n$ is a $\lambda$-invariant mean on ${\sf LUCB}(G)$, we find that
\maths{
\eps
&\leqslant \go n\Big(\sum_i\big(\psi_i-\lambda(g_i\inv)\psi_i\big)\Big)\\
&=\sum_i\Big(\go n(\psi_i)-\go n\big(\lambda(g_i\inv)\psi_i\big)\Big)\\
&=\sum_i\Big(\go n(\psi_i)-\go n\big(\psi_i\big)\Big)\\
&=0,
}
which is absurd. So $0\in \ov C^{\norm\cdot}$, showing that  $\alpha$ almost fixes points on $V$.
\end{proof}
Observe that another way of expressing Theorem \ref{skew affine} is by saying that, if $(V,\pi)$ is an isometric Banach $G$-module for a skew-amenable topological group $G$, then every bounded cocycle $b\in Z^1(G,\pi)$ belongs to $\ov{B^1(G,\pi)}$.

Let us also note that the condition that $b$ is bounded is essential. Indeed, let $G=\Z/2\Z\ltimes \R$ be the group of affine isometries of the real line $\R$. Then $G$ is both amenable and skew-amenable and has no non-zero homomorphisms to $\R$, whereas the tautological action on $\R$ does not have almost fixed points.



\section{Isometric group actions on metric spaces}\label{isom}
Suppose $G\curvearrowright X$ is a continuous isometric action by a topological group $G$ on a metric space $X$. We then have an induced linear action $G\overset \pi\curvearrowright\R  X$ defined by $\pi(g)\xi=\xi(g\inv \,\cdot\, )$ or, equivalently, by the formula
$$
\pi(g)\Big(\sum_{i=1}^na_ix_i\Big)=\sum_{i=1}^na_igx_i.
$$ 
Because $\M X$ is $\pi$-invariant and the action on $\M X$ is by isometries, we can extend  the action to $\AE X$ and thus obtain the isometric Banach $G$-module $(\AE X,\pi)$. 

Recall that
$$
\go {Lip}\,X={\sf Lip}\, X/\{\text{constant functions}\}
$$
and observe that the contragredient action on $\go{Lip}\,X$ is just
$$
\pi^*(g)\phi=\phi(g\inv \,\cdot\,).
$$
Thus, if $\phi\in {\sf Lip}\,X$ and $g\in G$, the equality  $\pi(g)\phi=\phi$ holds in $\go {Lip}\, X$ if and only if $\phi-\phi(g\,\cdot\,)$ is constant on $X$. It thus follows that
$$
(\go{Lip}\, X)^G=\{\phi\in \go{Lip}\, X\del \text{for all }g\in G,\text{ the function } \phi-\phi(g\,\cdot\,)\text{ is constant}\}.
$$

\begin{lemme}\label{R}
The formula 
$$
(R\phi)(g)=\text{the constant value of }\phi-\phi(g\;\cdot\,)
$$
defines a linear operator
$$
(\go{Lip}\, X)^G\overset R\longrightarrow {\sf Hom}(G,\R).
$$
\end{lemme}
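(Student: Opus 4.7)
The plan is to verify four things in order: (i) $R\phi$ is well-defined as a real-valued function on $G$, (ii) it is a group homomorphism, (iii) it is continuous, and (iv) the assignment $\phi\mapsto R\phi$ is linear.

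For (i), the characterisation of $(\go{Lip}\,X)^G$ displayed immediately before the lemma states exactly that $\phi-\phi(g\,\cdot\,)$ is constant on $X$ for every $g\in G$, so the expression $(R\phi)(g)$ designates a well-defined real number. Moreover, replacing a representative $\phi\in {\sf Lip}\,X$ of its class in $\go{Lip}\,X$ by $\phi+c$ leaves the difference $\phi-\phi(g\,\cdot\,)$ unchanged, so the value does not depend on the choice of representative.

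For (ii), fix $g,h\in G$ and any $x\in X$. Computing at $x$ and then inserting the intermediate point $hx$ yields
\maths{
(R\phi)(gh) &= \phi(x)-\phi(ghx)\\
&= \bigl[\phi(x)-\phi(hx)\bigr]+\bigl[\phi(hx)-\phi(g(hx))\bigr]\\
&= (R\phi)(h)+(R\phi)(g),
}
where the second bracket is evaluated using that $z\mapsto \phi(z)-\phi(gz)$ is constantly equal to $(R\phi)(g)$, in particular at the point $z=hx$.

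For (iii), fix any base point $x_0\in X$ so that $(R\phi)(g)=\phi(x_0)-\phi(gx_0)$ for every $g\in G$. The continuity of the action $G\curvearrowright X$ makes the orbit map $g\mapsto gx_0$ continuous, and composition with the (Lipschitz, hence continuous) function $\phi$ gives continuity of $g\mapsto \phi(gx_0)$, whence $R\phi\in {\sf Hom}(G,\R)$. Finally (iv) is immediate from the linearity of the map $\phi\mapsto \phi-\phi(g\,\cdot\,)$ for each fixed $g$. There is no real obstacle here; the only point requiring care is the bookkeeping of the cocycle-like identity in step (ii), where the middle-insertion must be made at $hx$ rather than at $x$ in order to expose $(R\phi)(g)$ as the constant value of $\phi-\phi(g\,\cdot\,)$ evaluated at $hx$.
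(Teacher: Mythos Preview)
Your proof is correct. The four verifications are all sound; in particular, your cocycle-style computation in (ii) and the continuity argument in (iii) via the orbit map are exactly what is needed.

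The paper takes a different route: it first observes that $b_x(g)=x-gx$ defines a cocycle in $Z^1(G,\pi)$ for each $x\in X$, checks that any two such cocycles $b_x,b_y$ differ by the coboundary $\partial(x-y)$ and hence induce the same class $\ov{b}_x\in \ov H^1(G,\pi)$, and then invokes the operator $Q$ of Lemma~\ref{operator Q} to obtain $R=Q(\ov{b}_x\otimes\,\cdot\,)$, independent of $x$. The homomorphism and continuity properties of $R\phi$ are thus inherited from the general construction of $Q$ rather than verified directly. Your argument is more elementary and self-contained; the paper's approach, on the other hand, identifies $R$ as a specialisation of the cohomological operator $Q$, which clarifies its role in the exact sequence $0\to (DX)^\perp\to(\go{Lip}\,X)^G\to{\sf Hom}(G,\R)$ used immediately afterwards.
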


\begin{proof}
For each $x\in X$, we observe that the formula
$$
b_x(g)=x-gx
$$
defines a cocycle $b_x\in Z^1(G,\pi)$. Indeed, 
$$
b_x(gf)=x-gfx=\pi(g)(x-fx)+(x-gx)=\pi(g)b_x(f)+b_x(g)
$$ 
for all $g,f\in G$. On the other hand, for all $x,y\in X$ and $g\in G$, we have 
$$
(b_x-b_y)(g)= (x-gx)-(y-gy)=(x-y)-\pi(g)(x-y)=\partial v(g),
$$
where $v=x-y\in \AE X$. It thus follows that the cocycles $b_x$ and $b_y$ are {\em cohomologous}, that is, induce the same cohomology class or element in $H^1(G,\pi)=Z^1(G,\pi)/B^1(G,\pi)$ and, a fortiori, the same element in $\ov H^1(G,\pi)$.

Observe now that, if $Q$ is the operator of Lemma \ref{operator Q}, the operator 
$$
\begin{tikzcd}
(\go{Lip}\, X)^G\arrow{rr}{{Q({\ov b}_x\otimes \,\cdot\,)}}      & &     {\sf Hom}(G,\R)   
\end{tikzcd}
$$
is independent of $x\in X$. Furthermore, for $\phi \in (\go{Lip}\, X)^G$ and $g\in G$, we have 
\maths{
Q({\ov b}_x\otimes \phi)(g)
&=\big(\phi\circ b_x\big)(g)\\
&=\phi(x)-\phi(gx)\\
&=\text{the constant value of }\phi-\phi(g\;\cdot\,)\\
&=(R\phi)(g).
}
So $R=Q({\ov b}_x\otimes \,\cdot\,)$ for any choice of $x\in X$.
\end{proof}

Define  the following $G$-invariant closed linear subspace of $\AE X$,
$$
DX=\ov\spa\{x-gx\del x\in X\;\&\; g\in G\}
$$
and note that 
$$
(\AE X)_G\;\subseteq \; DX
$$
and 
\maths{
(DX)^\perp
&=\{\phi\in \go{Lip}\, X\del \phi\text{ is constant on every orbit of }G\curvearrowright X\}\\
&={\sf ker}\,R.
}
Because $(\go{Lip}\, X)^G=\big((\AE X)_G\big)^\perp$, we thus have an exact sequence of linear maps
$$
0\;\longrightarrow\;(DX)^\perp \;\overset I\longrightarrow\; (\go{Lip}\, X)^G\;\overset R\longrightarrow\; {\sf Hom}(G,\R).
$$
Observe also that
$$
{\sf codim}_{DX}(\AE X)_G={\sf codim}_{((\AE X)_G)^\perp}(DX)^\perp={\sf codim}_{(\go{Lip}\, X)^G}(DX)^\perp={\sf rank}\, R,
$$
so ${\sf rank}\, R$ measures the difference between $DX$ and $(\AE X)_G$.

\begin{lemme}\label{unbounded metrics1}
Suppose $G\curvearrowright X$ is a continuous isometric action by a topological group $G$ on a metric space $X$ and let $(\go{Lip}\, X)^G\overset R\longrightarrow {\sf Hom}(G,\R)$ be the operator of Lemma \ref{R}. Then, if ${\sf rank}\,R<\infty$, the subspace
$$
\{\phi\in \go{Lip}\, X\del \phi\text{ is constant on every orbit of }G\curvearrowright X\}
$$
is complemented in $(\go{Lip}\,X)^G$. 
\end{lemme}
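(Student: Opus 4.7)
The plan is to recognize the subspace in question as the kernel of the operator $R$, observe that the finite-rank hypothesis forces this kernel to have finite codimension in $(\go{Lip}\, X)^G$, and then invoke the standard fact that any closed subspace of finite codimension in a Banach space is complemented.

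First, I would translate the statement into the language developed just before the lemma. By the computations preceding Lemma \ref{unbounded metrics1}, we have the identifications
$$
\{\phi\in\go{Lip}\,X\del \phi\text{ is constant on every }G\text{-orbit}\}=(DX)^\perp={\sf ker}\,R,
$$
and the exact sequence
$$
0\to (DX)^\perp\to (\go{Lip}\,X)^G\overset R\longrightarrow {\sf Hom}(G,\R)
$$
identifies $(\go{Lip}\,X)^G/(DX)^\perp$ with ${\sf im}\,R$. By assumption, $\dim {\sf im}\,R={\sf rank}\,R<\infty$, so $(DX)^\perp$ has finite codimension, say $n={\sf rank}\,R$, in the Banach space $(\go{Lip}\,X)^G$. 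Moreover, as an annihilator of a subset of $\AE X$, the subspace $(DX)^\perp$ is (even $w^*$-)closed.

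Next I would construct the complement explicitly. Choose $\phi_1,\ldots,\phi_n\in(\go{Lip}\,X)^G$ whose cosets form a basis of the quotient, and let $F={\sf span}\{\phi_1,\ldots,\phi_n\}$. Being finite-dimensional, $F$ is closed and $(\go{Lip}\,X)^G=(DX)^\perp\oplus F$ algebraically. On the closed subspace $(DX)^\perp+F=(\go{Lip}\,X)^G$, define linear functionals $\psi_1,\ldots,\psi_n$ by $\psi_i\begr_{(DX)^\perp}=0$ and $\psi_i(\phi_j)=\delta_{ij}$; these are continuous because their kernels $(DX)^\perp+{\sf span}\{\phi_j\del j\neq i\}$ are closed subspaces of codimension one. (If one prefers, one may instead apply the Hahn--Banach theorem after verifying continuity on the finite-dimensional piece $F$ and using closedness of $(DX)^\perp$.) The operator
$$
P\phi=\phi-\sum_{i=1}^n\psi_i(\phi)\,\phi_i
$$
is then a bounded linear projection of $(\go{Lip}\,X)^G$ onto $(DX)^\perp$, yielding the desired complement.

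There is no serious obstacle here: the lemma is really the abstract Banach space observation that closed subspaces of finite codimension are complemented, applied to the exact sequence already assembled in the paragraphs preceding the statement. The only point requiring care is the verification that $(DX)^\perp$ is closed in $(\go{Lip}\,X)^G$, which is immediate from its description as an annihilator, and checking that the codimension equals ${\sf rank}\,R$, which is the content of the exact sequence together with the first isomorphism theorem.
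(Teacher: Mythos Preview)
Your proof is correct and follows essentially the same approach as the paper: identify the subspace as $(DX)^\perp={\sf ker}\,R$, use the exact sequence to conclude it has finite codimension in $(\go{Lip}\,X)^G$, and invoke the fact that closed subspaces of finite codimension are complemented. The paper merely cites this last fact, whereas you spell out the construction of the projection, but the argument is the same.
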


\begin{proof}
By the above exact sequence, we see that ${\sf rank}\,R<\infty$ if and only if 
$$
({DX})^\perp=\{\phi\in \go{Lip}\, X\del \phi\text{ is constant on every orbit of }G\curvearrowright X\}
$$
has finite codimension in $(\go{Lip}\,X)^G$. The lemma now follows by noting that every closed linear subspace of finite codimension in a Banach space is complemented in the ambient space.
\end{proof}

\begin{lemme}\label{rank zero}
Suppose $G\curvearrowright X$ is a continuous isometric action by a topological group $G$ on a metric space $X$ so that either
\begin{enumerate}
\item ${\sf diam}_d(X)<\infty$,
\item ${\sf Hom}(G,\R)=\{0\}$,
\item any two orbit equivalent $x,y\in X$ can be transposed by some element of $G$, 
\item the set of  $g\in G$ for which
$$
\inf_{n\geqslant 1}\inf_{x\in X}\frac {d(x,g^nx)}n=0
$$ 
generate a dense subgroup of $G$.
\end{enumerate}
Then ${\sf rank}\,R=0$ and thus ${DX}=(\AE X)_G$ and 
$$
(\go{Lip}\,X)^G=\{\phi\in \go{Lip}\, X\del \phi\text{ is constant on every orbit of }G\curvearrowright X\}.
$$
\end{lemme}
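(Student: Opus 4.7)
The plan is to prove that under each of the four hypotheses the operator $R\colon (\go{Lip}\,X)^G \to {\sf Hom}(G,\R)$ from Lemma \ref{R} is identically zero, and then deduce the two stated equalities from the exact sequence
$$
0\;\longrightarrow\;(DX)^\perp \;\overset I\longrightarrow\; (\go{Lip}\, X)^G\;\overset R\longrightarrow\; {\sf Hom}(G,\R)
$$
recorded just before the lemma. The workhorse is the explicit formula proved in Lemma \ref{R}: for $\phi \in (\go{Lip}\,X)^G$, the map $R\phi$ is a continuous group homomorphism $G\to\R$ satisfying $R\phi(g) = \phi(x)-\phi(gx)$ for every $x \in X$, and in particular $|R\phi(g)| \leqslant L(\phi)\,d(x,gx)$ for every $x$ and $g$.

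Case (2) is trivial, and case (1) follows at once because bounded diameter gives $|R\phi(g)| \leqslant L(\phi)\cdot{\sf diam}_d(X)$ uniformly in $g$, and a bounded homomorphism to $\R$ must vanish. For case (3), fix $g\in G$ and $x\in X$, and pick $h\in G$ transposing $x$ and $gx$, that is, $hx=gx$ and $h(gx)=x$. Since $R\phi(h)$ does not depend on the choice of basepoint, evaluating at $x$ yields $R\phi(h)=\phi(x)-\phi(gx)$, whereas evaluating at $gx$ yields $R\phi(h)=\phi(gx)-\phi(x)$. These two expressions must agree, so $\phi(x)=\phi(gx)$, i.e.\ $R\phi(g)=0$; as $g$ was arbitrary, $R\phi\equiv 0$.

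For case (4), whenever $g\in G$ satisfies $\inf_{n\geqslant 1}\inf_{x\in X} d(x,g^n x)/n = 0$, the homomorphism property gives $R\phi(g) = R\phi(g^n)/n$ together with $|R\phi(g^n)| \leqslant L(\phi)\,d(x,g^n x)$ for all $x$ and $n$, so
$$
|R\phi(g)|\;\leqslant\; L(\phi)\cdot \inf_{n\geqslant 1}\inf_{x\in X} \frac{d(x,g^n x)}{n}\;=\;0.
$$
Thus $R\phi$ vanishes on the given set $S$ of such $g$. Because $R\phi$ is a continuous homomorphism and $\langle S\rangle$ is dense in $G$, it follows that $R\phi\equiv 0$. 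This extension step is the only mildly subtle point, and it is where continuity of $R\phi$ (inherited from the continuity of the action and the Lipschitz property of $\phi$) is essential.

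Once $R=0$ is established, the exact sequence gives $(DX)^\perp = (\go{Lip}\,X)^G$, which is exactly the second stated equality since $(DX)^\perp$ is by definition the set of $\phi$ constant on orbits. For the first equality, Lemma \ref{M13} identifies $((\AE X)_G)^\perp$ with $(\go{Lip}\,X)^G$, so $((\AE X)_G)^\perp = (DX)^\perp$; combined with the already known inclusion $(\AE X)_G\subseteq DX$, an application of Hahn--Banach (two closed subspaces sharing an annihilator coincide) yields $DX=(\AE X)_G$.
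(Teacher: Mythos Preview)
Your proof is correct and follows essentially the same line as the paper's: show $R=0$ in each case and read off the conclusions from the exact sequence and the equality ${\sf codim}_{DX}(\AE X)_G={\sf rank}\,R$. The only cosmetic differences are that the paper treats case~(1) as a special instance of~(4), and in case~(3) it argues directly in $\AE X$ by writing $x-gx=\tfrac12\big((x-hx)-\pi(h)(x-hx)\big)\in(\AE X)_G$ for the transposing element $h$, rather than going through $R\phi$ as you do.
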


\begin{proof}
Note first that (1) is a special case of (4). Also, the case (2) is obvious, since $R$ maps into ${\sf Hom}(G,\R)$.  On the other hand, in case (3), we note that, for all $x\in X$ and $g\in G$, there is some $h\in G$ so that $hx=gx$ and $hgx=h^2x=x$.   It thus follows that
$$
x-gx=\tfrac12\big( (x-hx) -\pi(h) (x-hx)\big) \in (\AE X)_G
$$
and so $DX=(\AE X)_G$.

In case (4),  for any $\phi\in (\go{Lip}\, X)^G$ and  a set of $g\in G$ generating a dense subgroup, we have 
$$
R\phi(g)=\inf_{n\geqslant 1}\frac {R\phi(g^n)}n=\inf_{n\geqslant 1}\inf_{x\in X}\frac {\phi x-\phi(g^nx)}n\leqslant L(\phi)\cdot \inf_{n\geqslant 1}\inf_{x\in X}\frac {d(x,g^nx)}n=0.
$$
Because $R\phi$ is a continuous group homomorphism, it follows that $R\phi=0$ and so ${\sf rank}\,R=0$, whereby ${DX}=(\AE X)_G$.
\end{proof}

For future reference, we also note two interesting cases in which the operator $R$ has finite rank by virtue of ${\sf Hom}(G,\R)$ being finite-dimensional.
\begin{lemme}\label{finite rank}
The requirement that $R$ has finite rank holds if
\begin{enumerate}
\item $G$ is {\em topologically finitely generated}, that is, contains a finitely generated dense subgroup,
\item $G$ is a connected locally compact Lie group. 
\end{enumerate}
\end{lemme}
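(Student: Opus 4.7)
Since $R$ takes values in ${\sf Hom}(G,\R)$, in either case it suffices to show that ${\sf Hom}(G,\R)$ is finite-dimensional, and then ${\sf rank}\,R\leqslant {\sf dim}\,{\sf Hom}(G,\R)<\infty$. So the whole plan reduces to bounding the dimension of the space of continuous homomorphisms $G\to\R$.

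For case (1), suppose $H=\langle g_1,\ldots,g_n\rangle$ is a finitely generated dense subgroup of $G$. I would consider the evaluation map
$$
{\sf ev}\colon {\sf Hom}(G,\R)\longrightarrow \R^n,\qquad \phi\longmapsto \big(\phi(g_1),\ldots,\phi(g_n)\big).
$$
This is obviously linear. For injectivity: any $\phi\in{\sf Hom}(G,\R)$ restricted to $H$ is determined by its values on the generating set $\{g_1,\ldots,g_n\}$, so the restriction $\phi|_H$ is determined by ${\sf ev}(\phi)$. Since $\phi$ is continuous and $H$ is dense in $G$, the restriction $\phi|_H$ then determines $\phi$ on all of $G$. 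Hence ${\sf ev}$ is injective and ${\sf dim}\,{\sf Hom}(G,\R)\leqslant n$.

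For case (2), I would invoke the automatic smoothness of continuous group homomorphisms between Lie groups (a classical theorem; see Cartan's theorem, or the standard consequence that every continuous homomorphism between Lie groups is real-analytic). Thus, every continuous $\phi\colon G\to\R$ is smooth and gives rise to a Lie algebra homomorphism $d\phi_1\colon \go g\to\R$, where $\go g$ denotes the Lie algebra of $G$. The assignment $\phi\mapsto d\phi_1$ is clearly linear, and is injective because $G$ is connected: indeed, any homomorphism $\phi$ satisfies $\phi(\exp X)={\sf exp}(d\phi_1(X))=d\phi_1(X)$ (as $\R$ is abelian), so $\phi$ is determined by $d\phi_1$ on a neighbourhood of the identity, and hence on all of $G$ since any connected group is generated by any such neighbourhood. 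Therefore ${\sf Hom}(G,\R)$ embeds linearly into $\go g^*$, which is finite-dimensional since $G$ is assumed to be a (finite-dimensional) Lie group.

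Neither step presents a serious obstacle. The only point one must be careful about is the automatic smoothness in case (2), but this is a standard fact about Lie groups that can simply be cited. In case (1), no topological assumption on $G$ beyond the existence of a finitely generated dense subgroup is needed—continuity of elements of ${\sf Hom}(G,\R)$ plus density of $H$ does all the work.
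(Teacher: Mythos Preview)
Your proof is correct and follows exactly the approach the paper intends: the paper states this lemma without a formal proof, merely noting beforehand that in both cases ${\sf Hom}(G,\R)$ is finite-dimensional, and in the introduction sketches precisely your two arguments (determination by finitely many generators for (1), passage to the Lie algebra homomorphism for (2)). There is nothing to add.
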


Suppose again that $G\curvearrowright X$ is a continuous isometric action by a topological group $G$ on a metric space $X$. Because $G$ acts by isometries, the collection 
$$
X/\!\!/G=\big\{\ov {Gx}\,\big|\, x\in X\big\}
$$ 
of closures of orbits partitions $X$ into closed $G$-invariant sets.  Furthermore, the Hausdorff distance $d_H$ between closed subsets of $X$ then satisfies
$$
d_H\big(\ov{Gx},\ov{Gy}\big)=\inf_{g,f\in G}d(gx,fy)=\inf_{h\in G}d(hx,y)
$$
and thus defines a metric on $X/\!\!/G$. The next lemma is well-known (see, for example, Lemmas 4.2 and 4.3 \cite{cuth}), but we include the simple proof for completeness.

\begin{lemme}
Let $G\curvearrowright X$ be an isometric group action on a metric space. Then we have the following isometric isomorphisms of Banach spaces
$$
\AE X/{DX}\iso \AE(X/\!\!/G)
$$
and 
$$
\{\phi\in \go{Lip}\, X\del \phi\text{ is constant on every orbit of }G\curvearrowright X\}\iso \go{Lip}(X/\!\!/G).
$$
\end{lemme}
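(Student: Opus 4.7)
My plan is to establish the second, Lipschitz-level isomorphism first by a direct verification and then deduce the first by the Arens--Eells / Lipschitz duality summarised in Section~\ref{sec:Lipschitz}.

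I would begin by observing that the natural surjection $q\colon X\to X/\!\!/G$, $q(x)=\ov{Gx}$, is $1$-Lipschitz: from the formula for the Hausdorff distance, $d_H(\ov{Gx},\ov{Gy})=\inf_{h\in G}d(hx,y)\leqslant d(x,y)$. Given any $\phi\in {\sf Lip}\,X$ that is constant on every $G$-orbit, continuity of $\phi$ forces it to be constant on every orbit closure $\ov{Gx}$, and so $\phi$ factors uniquely as $\phi=\tilde\phi\circ q$ for a function $\tilde\phi\colon X/\!\!/G\to \R$. Applying $|\phi(gx)-\phi(fy)|\leqslant L(\phi)\,d(gx,fy)$ to arbitrary $g,f\in G$ and taking the infimum yields $L(\tilde\phi)\leqslant L(\phi)$, while the reverse inequality follows from $q$ being $1$-Lipschitz. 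Conversely, every $\psi\in {\sf Lip}(X/\!\!/G)$ pulls back to $\psi\circ q\in {\sf Lip}\,X$ with the same Lipschitz constant and is manifestly constant on every orbit. Modding out by constants then gives the required isometric isomorphism
\[
\go{Lip}(X/\!\!/G)\iso \{\phi\in \go{Lip}\,X\del \phi\text{ is constant on every orbit of }G\curvearrowright X\}.
\]

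For the first isomorphism, I would let $Q\colon \AE X\to \AE(X/\!\!/G)$ be the linear operator obtained by extending $x\mapsto \ov{Gx}$. Explicitly, $Q$ sends $\xi=\sum_i a_i(x_i-y_i)\in \M X$ to $\sum_i a_i(\ov{Gx_i}-\ov{Gy_i})\in \M(X/\!\!/G)$, and the estimate $\norm{Q\xi}_{\AE}\leqslant \sum_i|a_i|d_H(\ov{Gx_i},\ov{Gy_i})\leqslant \sum_i|a_i|d(x_i,y_i)$ combined with the defining formula for $\norm\cdot_{\AE}$ shows $\norm Q\leqslant 1$, so $Q$ extends to a contraction on $\AE X$. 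By the previous paragraph, the adjoint $Q^*\colon \go{Lip}(X/\!\!/G)\to \go{Lip}\,X$ is precisely the pullback $\psi\mapsto \psi\circ q$, which is an isometric linear embedding whose image is the closed subspace $(DX)^\perp$. Standard Banach space duality then forces $Q$ to be a quotient map with $\ker Q=((DX)^\perp)_\perp=DX$, using that $DX$ is closed by definition; hence $Q$ factors through $\AE X/DX$ to the desired isometric isomorphism onto $\AE(X/\!\!/G)$.

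I do not anticipate any real obstacle here: the whole argument reduces to the single geometric inequality $d_H(\ov{Gx},\ov{Gy})\leqslant d(x,y)$, used once to make $q$ Lipschitz and once to descend Lipschitz constants, together with the formal fact that an isometric surjection of duals $Q^*$ implies a corresponding isometric quotient of preduals whose kernel is the pre-annihilator of the image of $Q^*$.
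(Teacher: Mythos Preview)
Your argument is correct and essentially dual to the paper's. The paper first constructs the map $A\colon \M X\to \M(X/\!\!/G)$ (your $Q$), shows $\norm A\leqslant 1$, and then proves the quotient property \emph{directly} by an explicit almost-isometric lifting: given $\zeta=\sum_i t_i(b_i-c_i)$ realising $\norm\zeta_{\AE}$, one picks representatives $x_i\in b_i$, $y_i\in c_i$ with $d(x_i,y_i)<(1+\eps)d_H(b_i,c_i)$ to produce a preimage $\xi$ with $\norm\xi_{\AE}<(1+\eps)\norm\zeta_{\AE}$. The kernel is then identified as $DX$, and the Lipschitz isomorphism is read off by dualising. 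You instead establish the Lipschitz isomorphism first by the factoring-through-$q$ argument, and then invoke the general Banach-space fact that an isometric adjoint forces the original map to be a metric quotient (this is where completeness of $\AE X$ and a successive-approximation step are hiding). Each route buys something: the paper's explicit lifting keeps everything at the level of molecules and makes the surjectivity and the kernel computation completely concrete, while your approach offloads the lifting to a standard duality principle and makes the two isomorphisms visibly equivalent. One small wording quibble: in your closing sentence $Q^*$ is an isometric \emph{embedding} into $\go{Lip}\,X$, not a surjection; the relevant duality is ``$Q^*$ isometric into $\Rightarrow$ $Q$ is a quotient map'', together with $\ker Q=({\sf im}\,Q^*)_\perp=((DX)^\perp)_\perp=DX$.
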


\begin{proof}
Define a linear operator 
$$
\M X\overset A\longrightarrow \M(X/\!\!/G)
$$
by
$$
(A\xi)(c)=\sum_{x\in c}\xi(x)
$$
and observe that
\maths{
\NORM{A\Big(\sum_{i=1}^nt_i(x_i-y_i)\Big)}_{\AE}
&=\NORM{\sum_{i=1}^nt_i\big(\ov{Gx_i}-\ov{Gy_i}\big)}_{\AE}\\
&\leqslant \sum_{i=1}^n|t_i|d_H\big(\ov{Gx_i},\ov{Gy_i}\big)\\
&\leqslant \sum_{i=1}^n|t_i|d(x_i,y_i),
}
which shows that $\norm A\leqslant 1$.  Conversely, suppose that $\zeta\in \M (X/\!\!/G)$, $\eps>0$ and write 
$$
\zeta=\sum_{i=1}^nt_i(b_i-c_i),
$$
so that $\norm \zeta_{\AE}=\sum_{i=1}^n|t_i|d_H(b_i,c_i)$. Choose $x_i\in b_i$ and $y_i\in c_i$ so that $d(x_i,y_i)<(1+\eps)d_H(b_i,c_i)$ for all $i$  and note that 
$$
A\Big(\sum_{i=1}^nt_i(x_i-y_i)\Big)=\zeta
$$
and 
$$
\NORM{\sum_{i=1}^nt_i(x_i-y_i)}_{\AE}\leqslant \sum_{i=1}^n|t_i|d(x_i,y_i)<(1+\eps)\norm\zeta_{\AE}.
$$
In other words, for all $\zeta\in \M(X/\!\!/G)$ and $\eps>0$, there is $\xi\in \M X$ with $\norm\xi_{\AE}<(1+\eps)\norm\zeta_{\AE}$ so that $A\xi=\zeta$. It follows  that $A$ extends uniquely  to a surjective bounded linear operator 
$$
\AE X\overset A\longrightarrow \AE(X/\!\!/G)
$$
so that
$$
{\sf ker}\, A=\ov{\M X\cap {\sf ker}\,A}=\ov\spa\, \{x-gx\del x\in X\;\&\; g\in G\}={DX}.
$$
Letting $\AE X\overset Q\longrightarrow \AE X/{DX}$ be the quotient map, we see that $A$ factors through the linear operator $\AE X/{DX}\overset J\longrightarrow \AE(X/\!\!/G)$ defined by $JQ=A$,
$$
\begin{tikzcd}
\AE X\arrow{dr}{Q}\arrow{rr}{A}      & &      \AE(X/\!\!/G)     \\
 &\AE X/{DX}\arrow{ur}{J} &\\
\end{tikzcd}
\vspace{-0.5cm}
$$
By the properties of $A$, the operator $J$ is an isometric isomorphism between $\AE X/{DX}$ and $\AE(X/\!\!/G)$.
Note also that
\maths{
\go{Lip}(X/\!\!/G) &=\AE(X/\!\!/G)^*\iso\big(\AE X/{DX}\big)^* \iso\big({DX}\big)^\perp\\
&=\{\phi\in \go{Lip}\, X\del \phi\text{ is constant on every orbit of }G\curvearrowright X\},
}
which proves the second part.
\end{proof}

Assuming moreover that ${\sf rank}\, R=0$, we furthermore have an isometric isomorphism
$$
\AE X/(\AE X)_G\iso  \AE(X/\!\!/G)
$$
and similarly
$$
\go{Lip}(X)^G\iso \go{Lip}(X/\!\!/G)
$$

\begin{lemme}
The isometric action $G\curvearrowright X$ is topologically transitive, that is, has a dense orbit, if and only if  $\AE X={DX}$.
\end{lemme}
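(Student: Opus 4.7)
The plan is to apply the isometric isomorphism $\AE X / DX \cong \AE(X/\!\!/G)$ established in the preceding lemma in order to translate the question into one about the orbit-closure space $X/\!\!/G$. Clearly $\AE X = DX$ if and only if the quotient Banach space $\AE X / DX$ is trivial, which, via that isomorphism, is equivalent to $\AE(X/\!\!/G) = \{0\}$.

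Next I would verify the elementary fact that, for any metric space $Y$, one has $\AE Y = \{0\}$ precisely when $|Y| \leqslant 1$. Indeed, if $p \neq q$ are two distinct points of $Y$, then $p - q$ is a non-zero vector of $\M Y \subseteq \AE Y$ of norm $d(p,q) > 0$; conversely, $\M Y = \{0\}$ whenever $|Y| \leqslant 1$. Since $X$ (and hence $X/\!\!/G$) is non-empty, the condition $\AE X = DX$ thus amounts to $|X/\!\!/G| = 1$, i.e., $\ov{Gx} = X$ for every $x \in X$.

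It then remains to check that, for an isometric action, having a single dense orbit is equivalent to every orbit being dense. One direction is tautological. For the converse, assuming $\ov{Gx_0} = X$ and given any $y, z \in X$ and $\eps > 0$, I would pick $g_1, g_2 \in G$ with $d(g_1 x_0, y) < \eps/2$ and $d(g_2 x_0, z) < \eps/2$, and use the isometric property of the action together with the triangle inequality,
$$
d(g_2 g_1^{-1} y, z) \leqslant d(g_1^{-1} y, x_0) + d(g_2 x_0, z) < \eps,
$$
to conclude $\ov{Gy} = X$. The main obstacle, such as it is, is this last observation that topological transitivity coincides with minimality for isometric actions; everything else follows mechanically from the preceding lemma.
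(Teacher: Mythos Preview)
Your proposal is correct and follows essentially the same route as the paper: reduce via the isomorphism $\AE X/DX\cong \AE(X/\!\!/G)$ to the triviality of $\AE(X/\!\!/G)$, and then to $X/\!\!/G$ being a singleton. The only difference is that you spell out the equivalence between ``some orbit is dense'' and ``every orbit is dense'', whereas the paper leaves this implicit, having already noted earlier that for an isometric action the orbit closures $\ov{Gx}$ partition $X$.
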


\begin{proof}
Observe that the action is topologically transitive if and only if $X/\!\!/G$ is a singleton, which in turn happens if and only if $\AE(X/\!\!/G)=\{0\}$. The result thus follows from the isomorphism $\AE X/{DX}\iso \AE(X/\!\!/G)$.
\end{proof}

For the next lemma, we define the distance between two subsets of $X$ to be the infimum of distances between points in the two sets.
\begin{lemme}\label{no invariants}
Suppose $\xi ,\zeta\in \M X$ satisfy
$$
{\sf dist}\big({\sf supp}(\xi),  {\sf supp}(\zeta)\big)\geqslant \max\big\{{\sf diam}\, {\sf supp}(\xi), {\sf diam}\, {\sf supp}(\zeta)\big\}.
$$
Then
$$
\norm{\xi+\zeta}_{\AE}=\norm\xi_{\AE}+\norm\zeta_{\AE}.
$$
\end{lemme}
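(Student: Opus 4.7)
The inequality $\norm{\xi+\zeta}_{\AE}\leqslant \norm\xi_{\AE}+\norm\zeta_{\AE}$ is just the triangle inequality, so the content is the reverse bound. My plan is to prove this via Kantorovich duality, i.e.\ Equation (\ref{AE}), by constructing for any $\eps>0$ a single $1$-Lipschitz function $\phi\colon X\to \R$ such that
$$
\sum_{x\in X}(\xi+\zeta)(x)\phi(x)>\norm\xi_{\AE}+\norm\zeta_{\AE}-2\eps.
$$

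Set $A={\sf supp}(\xi)$, $B={\sf supp}(\zeta)$ and $D=\max\{{\sf diam}\,A,{\sf diam}\,B\}$, so that by hypothesis ${\sf dist}(A,B)\geqslant D$. First I would pick, by Kantorovich duality, $1$-Lipschitz $\phi_1,\phi_2\colon X\to\R$ so that $\sum_{x\in A}\xi(x)\phi_1(x)>\norm\xi_{\AE}-\eps$ and $\sum_{x\in B}\zeta(x)\phi_2(x)>\norm\zeta_{\AE}-\eps$. Because $\xi,\zeta\in \M X$ have mean $0$, these pairings are invariant under adding a constant to $\phi_1$ or $\phi_2$. I will exploit this to shift $\phi_1$ and $\phi_2$ so that $\phi_1(A)\subseteq[-D/2,D/2]$ and $\phi_2(B)\subseteq[-D/2,D/2]$, which is possible since the $1$-Lipschitz images $\phi_1(A)$ and $\phi_2(B)$ both have diameter at most $D$.

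The key step is then to paste $\phi_1\begr_A$ and $\phi_2\begr_B$ into a single $1$-Lipschitz function $\phi$ on $A\cup B$. For points in the same piece, the $1$-Lipschitz property is inherited from $\phi_1$ or $\phi_2$. For $a\in A$ and $b\in B$, the two shifts ensure $|\phi(a)-\phi(b)|\leqslant D$, while by hypothesis $d(a,b)\geqslant {\sf dist}(A,B)\geqslant D$. (The degenerate case $A\cap B\neq \emptyset$ forces $D=0$ and one quickly reduces to $\xi=\zeta=0$.) Finally, I would extend $\phi$ to all of $X$ via the standard McShane formula $\tilde\phi(x)=\inf_{y\in A\cup B}\big(\phi(y)+d(x,y)\big)$, obtaining a $1$-Lipschitz function on $X$ agreeing with $\phi_1$ on $A$ and with $\phi_2$ on $B$. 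Plugging $\tilde\phi$ into (\ref{AE}) gives
$$
\norm{\xi+\zeta}_{\AE}\geqslant \sum_{x\in A}\xi(x)\phi_1(x)+\sum_{x\in B}\zeta(x)\phi_2(x)>\norm\xi_{\AE}+\norm\zeta_{\AE}-2\eps,
$$
and letting $\eps\to 0$ finishes the proof.

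The only subtle point is the compatibility check on the cross terms $(a,b)\in A\times B$: it is precisely the hypothesis comparing ${\sf dist}(A,B)$ with the diameters that makes the centring argument work. Everything else is either the triangle inequality, Kantorovich duality, or McShane extension.
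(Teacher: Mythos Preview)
Your proof is correct and takes a genuinely different route from the paper's. The paper argues on the \emph{primal} side: writing $\xi=a-b$ and $\zeta=c-d$ with $a,b,c,d$ positive, it takes an optimal transport plan for $(a+c)-(b+d)$, splits it into within-$A$, within-$B$, and cross pieces $a_2-d_2$, $c_2-b_2$, and then shows that the cross pieces can be rerouted internally at no greater cost, using the diameter/distance hypothesis in the form $a_2(X)\cdot{\sf diam}\,{\sf supp}(\xi)\leqslant a_2(X)\cdot{\sf dist}(A,B)$. You instead work on the \emph{dual} side via Kantorovich duality: near-optimal $1$-Lipschitz potentials for $\xi$ and $\zeta$ are centred into $[-D/2,D/2]$, pasted on $A\cup B$, and McShane-extended. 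Your centring trick dispatches the cross-compatibility in one line, whereas the paper has to track four sub-measures and a mass-balance identity $c_2(X)=b_2(X)=a_2(X)=d_2(X)$; on the other hand, the paper's argument makes the transport-rerouting picture explicit and avoids invoking McShane. The two proofs hinge on the same inequality, just dualised, so neither is deeper than the other --- yours is a bit shorter, the paper's is a bit more visual.
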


\begin{proof}
Write $\xi=a-b$ and $\zeta=c-d$, where $a,b,c,d$ are finite supported positive measures on $X$ so that ${\sf supp}(a)\cap {\sf supp}(b)=\tom$ and ${\sf supp}(c)\cap {\sf supp}(d)=\tom$. For a finitely supported measure $e$ on $X$, let also
$$
e(X)=\sum_{x\in X}e(x)
$$
be the total $e$-measure of $X$. By the definition of the Arens--Eells norm,  we can find finitely supported positive measures $a_i,b_i,c_i,d_i$ for $i=1,2$ so that
$$
a=a_1+a_2, \quad b=b_1+b_2, \quad c=c_1+c_2, \quad d=d_1+d_2,
$$
$$
a_1(X)=b_1(X), \quad a_2(X)=d_2(X), \quad c_1(X)=d_1(X), \quad c_2(X)=b_2(X) 
$$
and 
\maths{
\norm{\xi+\zeta}_{\AE}
&=\norm{(a+c)-(b+d)}_{\AE}\\
&=\norm{a_1-b_1}_{\AE}+\norm{a_2-d_2}_{\AE}+\norm{c_1-d_1}_{\AE}+\norm{c_2-b_2}_{\AE}.
}
Because $a_1(X)+a_2(X)=a(X)=b(X)=b_1(X)+b_2(X)$, we find that
$$
c_2(X)=b_2(X)=a_2(X)=d_2(X).
$$
It follows that
\maths{
\norm{\xi}_{\AE}+\norm{\zeta}_{\AE}
&=\norm{a-b}_{\AE}+\norm{c-d}_{\AE}\\
&\leqslant \norm{a_1-b_1}_{\AE}+\norm{a_2-b_2}_{\AE}+\norm{c_1-d_1}_{\AE}+\norm{c_2-d_2}_{\AE}\\
&\leqslant \norm{a_1-b_1}_{\AE}+a_2(X)\cdot {\sf diam}\, {\sf supp}(\xi)\\
&{}\quad+\norm{c_1-d_1}_{\AE}+c_2(X)\cdot {\sf diam}\, {\sf supp}(\zeta)\\
&\leqslant \norm{a_1-b_1}_{\AE}+a_2(X)\cdot {\sf dist}\big({\sf supp}(\xi),  {\sf supp}(\zeta)\big)\\
&{}\quad+\norm{c_1-d_1}_{\AE}+c_2(X)\cdot {\sf dist}\big({\sf supp}(\xi),  {\sf supp}(\zeta)\big)\\
&\leqslant \norm{a_1-b_1}_{\AE}+\norm{a_2-d_2}_{\AE}+\norm{c_1-d_1}_{\AE}+\norm{c_2-b_2}_{\AE}\\
&=\norm{\xi+\zeta}_{\AE}.
}
The reverse inequality is immediate from the triangle inequality.
\end{proof}

\begin{exa}[Non-complementation of $(\AE X)_G$ and $DX$ in $\AE X$]
Observe that, when the isometric action $G\curvearrowright X$ has unbounded orbits, it follows from Lemma \ref{no invariants} that, for all $\xi\in \AE X$,
$$
\sup_{g\in G}\norm{\xi-\pi(g)\xi}_{\AE}=2\norm{\xi}_{\AE}
$$
and so, in particular, $(\AE X)^G=\{0\}$. Note also that, by Lemma \ref{M13}, the only possible $G$-invariant complement of $(\AE X)_G$ or of $DX$ in $\AE X$ is the subspace $(\AE X)^G$.

However, if $X/\!\!/G$ contains at least two points, then 
$$
\AE X/{DX}\iso \AE(X/\!\!/ G)\neq \{0\},
$$ 
whence $(\AE X)_G\subseteq {DX}\subsetneqq\AE X$.
So, if $G\curvearrowright X$ has unbounded orbits and fails to be topologically transitive, then neither $(\AE X)_G$ nor $DX$ have a $G$-invariant complement in $\AE X$.
\end{exa}


\section{Isometric actions by amenable groups}\label{amen gps}
We now arrive at the core applications, which combines the decomposition results of Section \ref{decompo} with the observations of Section \ref{isom}.

The following result was proved by M. C\'uth and M. Doucha \cite{cuth} for isometric actions on  metric spaces with bounded orbits by amenable topological groups that, furthermore, are either locally compact or balanced.  In the more general context below, it is a direct consequence of Lemma \ref{unbounded metrics1} and Theorem \ref{thm:alaoglu}. 
\begin{thm}\label{thm:cuth1}
Suppose $G\curvearrowright X$ is a continuous isometric action by an amenable  topological group $G$ on a metric space $X$ and assume that $(\go{Lip}\, X)^G\overset R\longrightarrow {\sf Hom}(G,\R)$ has finite rank.
Then
$$
\{\phi\in \go{Lip}\, X\del \phi\text{ is constant on every orbit of }G\curvearrowright X\}
$$
is complemented in $\go {Lip}\, X$.
\end{thm}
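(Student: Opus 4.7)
The plan is to cascade two bounded projections using transitivity of complementation: first I build a bounded projection from $\go{Lip}\,X$ onto the $G$-invariant subspace $(\go{Lip}\,X)^G$, and then compose it with a projection from $(\go{Lip}\,X)^G$ onto the subspace
$$
W=\{\phi\in \go{Lip}\, X\del \phi\text{ is constant on every orbit of }G\curvearrowright X\}.
$$

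For the first projection, I treat $V=\AE X$ as the isometric Banach $G$-module with linear action $\pi$ described at the start of Section \ref{isom}, so that $V^{*}=\go{Lip}\,X$ carries the contragredient action. Amenability of $G$ then lets me invoke Theorem \ref{thm:alaoglu}, which yields a mean operator $M\colon V\to V^{**}$ whose adjoint $M^{*}\colon V^{***}\to V^{*}$ is a bounded projection onto $(V^{*})^{G}=(\go{Lip}\,X)^{G}$. Precomposing $M^{*}$ with the canonical inclusion $V^{*}\hookrightarrow V^{***}$ produces a bounded linear idempotent
$$
P_{1}\colon \go{Lip}\,X\to \go{Lip}\,X
$$
with range $(\go{Lip}\,X)^{G}$. (Idempotence is immediate once one notes, via Proposition \ref{prop:meanoper}, that $M^{*}$ acts as the identity on $(V^{*})^{G}$.)

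For the second projection, the finite-rank hypothesis on $R$ is exactly the assumption needed by Lemma \ref{unbounded metrics1}, and that lemma furnishes a bounded linear projection $P_{2}\colon (\go{Lip}\,X)^{G}\to (\go{Lip}\,X)^{G}$ with range $W$. The composition $P_{2}\circ P_{1}\colon \go{Lip}\,X\to \go{Lip}\,X$ is then bounded, has image $W$, and fixes $W$ pointwise: indeed $W\subseteq (\go{Lip}\,X)^{G}$ is fixed by $P_{1}$ and $W$ is fixed by $P_{2}$. Thus $P_{2}\circ P_{1}$ is an idempotent onto $W$, witnessing the required complementation.

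There is essentially no substantive obstacle at the present stage; the work has been front-loaded into the two supporting results. Theorem \ref{thm:alaoglu} is where amenability is actually consumed — an invariant mean on ${\sf LUCB}(G)$ is converted into a norm-one projection onto the $G$-fixed part of the dual module — and Lemma \ref{unbounded metrics1} is where the finite-rank hypothesis does its job, absorbing the potential obstruction from nontrivial continuous homomorphisms $G\to \R$ via the elementary fact that closed subspaces of finite codimension in a Banach space are complemented. The only item worth double-checking is that $P_{1}$ is genuinely idempotent (and not merely retracts $\go{Lip}\,X$ into $(\go{Lip}\,X)^{G}$), but this is recorded in Proposition \ref{prop:meanoper}, so the cascade goes through cleanly.
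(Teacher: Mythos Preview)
Your proof is correct and follows essentially the same route as the paper: compose the projection $\go{Lip}\,X\to(\go{Lip}\,X)^G$ supplied by Theorem~\ref{thm:alaoglu} with the projection $(\go{Lip}\,X)^G\to (DX)^\perp$ supplied by Lemma~\ref{unbounded metrics1}. The paper's proof is just a terser version of yours, omitting the explicit verification of idempotence that you spell out.
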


\begin{proof}
It suffices to note that by Theorem \ref{thm:alaoglu} and Lemma \ref{unbounded metrics1} there are projections
$$
\go{Lip}\, X\overset P\longrightarrow(\go{Lip}\, X)^G\overset Q\longrightarrow ({DX})^\perp.
$$
As furthermore
$$
({DX})^\perp=\{\phi\in \go{Lip}\, X\del \phi\text{ is constant on every orbit of }G\curvearrowright X\},
$$
$QP$ is the requested projection. 
\end{proof}

\begin{rem}\label{rem:rank sero}
Observe that when ${\sf rank}\,R=0$, then $(\go{Lip}\, X)^G= ({DX})^\perp$ and the projection $P$ onto $(\go{Lip}\, X)^G$ provided by Theorem \ref{thm:alaoglu} has norm $1$. Thus, in this case, we find that 
$$
\{\phi\in \go{Lip}\, X\del \phi\text{ is constant on every orbit of }G\curvearrowright X\}
$$
is $1$-complemented in $\go {Lip}\, X$.
\end{rem}

Note that in Theorem \ref{thm:cuth1} the complement of the subspace 
$$
\{\phi\in \go{Lip}\, X\del \phi\text{ is constant on every orbit of }G\curvearrowright X\}
$$ 
need not in general be $G$-invariant. In order to ensure this, we need further assumptions, namely that $G$ is additionally skew-amenable, in which case we may additionally apply Theorem \ref{G-equivariant}.

\begin{thm}\label{thm:equivproj}
Suppose $G$ is an amenable and skew-amenable topological group and that $G\curvearrowright X$ is a continuous isometric action on a metric space $X$. Assume furthermore that either $X$ has finite diameter or that $G$ has no non-trivial continuous homomorphisms to $\R$. 
Then there is a $G$-invariant  contraction 
$$
\go {Lip}\, X \maps S\go{Lip}(X/\!\!/G)
$$
so that $(S\phi\big)\big(\ov{Gx}\big)=\phi(x)$ whenever $\phi$ is constant on every orbit of $G\curvearrowright X$.
\end{thm}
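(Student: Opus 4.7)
The plan is to apply the machinery already built in the paper to the isometric Banach $G$-module $V = \AE X$, whose dual is $V^* = \go{Lip}\, X$. Since $G$ is both amenable and skew-amenable, Theorem \ref{G-equivariant} furnishes a $G$-equivariant projection
$$
\go{Lip}\, X \maps P (\go{Lip}\, X)^G.
$$
Inspecting the proof of Theorem \ref{G-equivariant}, this projection is given by $P = M^*N^*$, where $M, N$ are the mean operators associated to the amenability and skew-amenability means respectively. Since each of $M$ and $N$ is a contraction by Proposition \ref{prop:meanoper}, so are their adjoints $M^*$ and $N^*$, and hence $P$ is a contraction.

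Next I would invoke the hypothesis. Under either alternative assumption — $X$ of finite diameter, or ${\sf Hom}(G,\R) = \{0\}$ — Lemma \ref{rank zero} yields ${\sf rank}\, R = 0$, whence
$$
(\go{Lip}\, X)^G \;=\; (DX)^\perp \;=\; \{\phi \in \go{Lip}\, X \mid \phi \text{ is constant on every orbit of } G \curvearrowright X\}.
$$
This latter space is isometrically isomorphic to $\go{Lip}(X/\!\!/G)$ via the canonical identification that sends $\phi \in (DX)^\perp$ to the function $\bar\phi$ on $X/\!\!/G$ with $\bar\phi(\ov{Gx}) = \phi(x)$ (well-defined by continuity of $\phi$ and constancy on orbits). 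Defining $S$ to be $P$ followed by this isomorphism produces a contraction $\go{Lip}\, X \maps S \go{Lip}(X/\!\!/G)$.

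To verify $G$-invariance, note that since the image of $P$ lies in $(\go{Lip}\, X)^G$ on which $\pi^*(g)$ acts trivially, the identity $\pi^*(g)P = P$ holds; combined with the $G$-invariance $P \pi^*(g) = P$ provided by Theorem \ref{G-equivariant}, we get $G$-equivariance and, projecting onto a space on which $G$ acts trivially, $G$-invariance of $S$. Finally, for the compatibility property: if $\phi \in \go{Lip}\, X$ is constant on every orbit, then $\phi \in (DX)^\perp = (\go{Lip}\, X)^G$, so $P\phi = \phi$ as $P$ is a projection onto this subspace, and therefore $(S\phi)(\ov{Gx}) = (P\phi)(x) = \phi(x)$.

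The only nontrivial aspect is recognizing that the rank hypothesis on $R$ is exactly what collapses the two a priori distinct subspaces $(\go{Lip}\, X)^G$ and $(DX)^\perp$ into one, which is precisely the role played by the dichotomy in the hypothesis through Lemma \ref{rank zero}. All remaining steps are essentially formal consequences of Theorem \ref{G-equivariant} and the isometric duality set up in Sections \ref{sec:Lipschitz} and \ref{isom}.
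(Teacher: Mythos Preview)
Your proof is correct and follows precisely the approach the paper itself indicates: the paper does not spell out a proof of Theorem \ref{thm:equivproj} but simply remarks that, beyond the setting of Theorem \ref{thm:cuth1}, one ``may additionally apply Theorem \ref{G-equivariant}'', and you have faithfully carried this out by taking $V=\AE X$, invoking Lemma \ref{rank zero} to identify $(\go{Lip}\,X)^G$ with $(DX)^\perp\cong\go{Lip}(X/\!\!/G)$, and observing from Proposition \ref{prop:meanoper} that $P=M^*N^*$ is a contraction. The verification of $G$-invariance and of the compatibility condition is exactly as you describe.
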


Our next result is similarly a reformulation and extension to potentially unbounded \'ecarts\footnote{An {\em écart}  or {\em pseudometric} $d$ on a set $X$ is the same as a metric except that possibly $d(x,y)=0$ for distinct $x,y\in X$.} of the main result of Schneider and Thom's paper \cite{thom}. The main import of their theorem is that it provides a Reiter or F\o lner type characterisation of amenability of topological groups beyond the locally compact ones. The passage to unbounded metrics however affords a new interesting formulation of amenability even for discrete amenable groups.
Nevertheless, in order to get the result for unbounded \'ecarts, we need to exclude examples such as $\R$ with the standard euclidean metric. Thus, we have the somewhat ironical  situation that the characterisation of amenability below does not apply to the prototypical examples of amenable groups such as $\Z$ and $\R$.

\begin{thm}\label{amen-char}
Suppose $d$ is a continuous left-invariant \'ecart on an amenable topological group $G$ and assume that either $d$ is bounded  or that $G$ has no non-trivial continuous homomorphisms to $\R$. 
Then, for every finite subset $E\subseteq G$, we have 
$$
\inf_{\beta\in \Delta G}\;\;\max_{g,f\in E}\;\;\norm{\beta g-\beta f}_{\AE}=0.
$$
Furthermore, for all $\eps>0$, there are $h_1,\ldots, h_n\in  G$ for which
$$
\max_{g,f\in E}\;\;\min_{\sigma\in {\sf Sym}(n)}\;\;\frac1n \sum_{i=1}^nd(h_ig,h_{\sigma(i)}f)<\eps.
$$
\end{thm}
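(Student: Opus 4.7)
The plan is to interpret the statement through the isometric Banach $G$-module $(\AE G, \pi)$ induced by the action of $G$ on itself by left multiplication, $\pi(h)x = hx$. Because $d$ is left-invariant, this action is isometric on $(G,d)$ and extends canonically to an isometric action on $\AE G$. For any $\beta = \sum_i \lambda_i h_i \in \Delta G$ and any $g, f \in G$,
\[
\pi(\beta)(g - f) = \sum_i \lambda_i (h_i g - h_i f) = \beta g - \beta f,
\]
so the first assertion is precisely $\inf_{\beta \in \Delta G} \max_{g,f \in E} \|\pi(\beta)(g-f)\|_{\AE} = 0$.

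The key step is to verify that $g - f \in (\AE G)_G$ for every $g, f \in G$. The hypotheses are designed exactly to invoke Lemma \ref{rank zero} applied to $G \curvearrowright (G,d)$: case (1) when $d$ has finite diameter, and case (2) when ${\sf Hom}(G,\R) = \{0\}$. In either case ${\sf rank}\, R = 0$, so $DG = (\AE G)_G$. Writing $f = (fg^{-1})g$ exhibits $g - f = g - hg$ for $h = fg^{-1}$, which is a defining element of $DG$. Applying Theorem \ref{thm:alaoglu} to the finite set $\{g-f : g, f \in E\} \subseteq (\AE G)_G$ then produces a single $\beta \in \Delta G$ making $\max_{g,f \in E}\|\pi(\beta)(g-f)\|_{\AE}$ arbitrarily small, which establishes the first claim.

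For the second assertion, fix $\eps > 0$ and choose $\beta = \sum_{i=1}^k \lambda_i h_i$ with $\max_{g,f \in E}\|\pi(\beta)(g-f)\|_{\AE} < \eps/2$. Approximate each $\lambda_i$ by a rational $p_i/N$ with common denominator $N$ and $\sum_i p_i = N$, and list the $h_i$'s with multiplicities $p_i$ as $h_1', \ldots, h_N'$, obtaining the uniform measure $\beta' = \tfrac1N \sum_{j=1}^N h_j'$. Left-invariance of $d$ gives $d(h_i g, h_i f) = d(g,f)$, whence
\[
\|\pi(\beta - \beta')(g-f)\|_{\AE} \leq \sum_i \bigl|\lambda_i - \tfrac{p_i}{N}\bigr|\, d(g,f),
\]
which, since $E$ is finite and $d$ takes finite values on $E \times E$, can be made smaller than $\eps/2$ uniformly in $g, f \in E$ by choosing $N$ large. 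Hence $\max_{g,f \in E}\|\beta' g - \beta' f\|_{\AE} < \eps$.

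Finally, Lemma \ref{konig} applied to the $N$-tuples $(h_j' g)_j$ and $(h_j' f)_j$ gives
\[
\NORM{\sum_{j=1}^N h_j' g - \sum_{j=1}^N h_j' f}_{\AE} = \min_{\sigma \in {\sf Sym}(N)} \sum_{j=1}^N d(h_j' g, h_{\sigma(j)}' f),
\]
since the proof of that lemma realizes the Arens--Eells infimum at a permutation matrix. Dividing through by $N$ and using the previous step then yields the required inequality with $n = N$. The main conceptual step is the identification of $g - f$ as an element of $(\AE G)_G$ via Lemma \ref{rank zero}; once this is in place, Theorem \ref{thm:alaoglu}, the rational approximation, and the König rearrangement of Lemma \ref{konig} combine routinely.
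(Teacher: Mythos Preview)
Your argument is correct and follows essentially the same route as the paper: reduce to the isometric Banach $G$-module $(\AE X,\pi)$ for $X=G/d$, invoke Lemma~\ref{rank zero} to identify $DX=(\AE X)_G$, apply Theorem~\ref{thm:alaoglu} to the finite set $\{g-f\mid g,f\in E\}$, then pass to a rational convex combination and finish with Lemma~\ref{konig}. The only cosmetic difference is that the paper explicitly passes to the metric quotient $X=G/d$ (since $d$ is merely an \'ecart, $\norm{\cdot}_{\AE}$ is a priori only a seminorm on $\M G$), whereas you write $\AE G$ directly; this is harmless but worth a word of clarification.
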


\begin{proof}
Let $X=G/d$ be the metric space obtained from $G$ by identifying elements of $d$-distance $0$ and consider 
the isometric left-multiplication action $G\curvearrowright X$. For simplicity of notation, we identify elements of $G$ with their image in $G/d$. Set
$$
F=\{g-f\in \AE X\del g,f\in E\}
$$
and note that $F\subseteq {DX}$. By  Lemma \ref{rank zero},  $DX=(\AE X)_G$. Therefore, by Theorem \ref{thm:alaoglu},  we find that
$$
\inf_{\beta\in \Delta G}\;\;\max_{g,f\in E }\;\;\norm{\beta g-\beta f}_{\AE}=\inf_{\beta\in \Delta G}\;\;\max_{\xi \in F}\;\;\norm{\pi(\beta)\xi}_{\AE}=0.
$$
Now, for $\eps>0$ fixed, choose $\beta\in \Delta G$ so that $\max_{g,f\in E }\;\;\norm{\beta g-\beta f}_{\AE}<\eps$. By perturbing $\beta$, we may assume that $\beta$ is a rational convex combination of elements of $G$. This means that we may write 
$$
\beta=\frac 1n(h_1+\cdots+h_n)
$$
for some sequence $h_1,\ldots, h_n\in G$ (possibly with repetition among the $h_i$).  It thus follows from Lemma \ref{konig} that
$$
\min_{\sigma\in {\sf Sym}(n)}\;\frac 1n\sum_{i=1}^nd(h_ig,h_{\sigma(i)}f)=\norm{\beta g-\beta f}_{\AE}<\eps,
$$
which finishes the proof.
\end{proof}

Under the hypotheses of Theorem \ref{amen-char}, a straightforward application of Markov's inequality gives us  the following corollary.
\begin{cor}
For all finite $E\subseteq G$ and $\eps>0$, there are $h_1,\ldots, h_n\in G$ so that
$$
\max_{g,f\in E}\;\;\min_{\sigma\in {\sf Sym}(n)}\;\;{ \# \{i\del d(h_ig,h_{\sigma(i)}f)\geqslant\eps\}}<\eps n.
$$
\end{cor}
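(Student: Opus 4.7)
The plan is to derive this corollary directly from Theorem \ref{amen-char} together with Markov's inequality, as the author hints in the last sentence.

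First I would apply Theorem \ref{amen-char} with the finite set $E$ and with the error parameter $\eps^2$ in place of $\eps$. This yields elements $h_1,\ldots,h_n\in G$ such that, for every pair $g,f\in E$, there exists some permutation $\sigma\in {\sf Sym}(n)$ (depending on $g,f$) with
$$
\frac1n\sum_{i=1}^n d(h_ig,h_{\sigma(i)}f)<\eps^2.
$$

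Next, fix any $g,f\in E$ and let $\sigma$ be the corresponding optimal permutation. Write $a_i=d(h_ig,h_{\sigma(i)}f)\geqslant 0$, so $\sum_{i=1}^n a_i<n\eps^2$. By Markov's inequality applied to the non-negative sequence $(a_i)$,
$$
\#\{i\del a_i\geqslant\eps\}\leqslant \frac{1}{\eps}\sum_{i=1}^n a_i<\frac{n\eps^2}{\eps}=\eps n.
$$
In particular, for this choice of $\sigma$, the set $\{i\del d(h_ig,h_{\sigma(i)}f)\geqslant\eps\}$ has fewer than $\eps n$ elements. Taking the minimum over $\sigma\in {\sf Sym}(n)$ only makes the count smaller, and then taking the maximum over $g,f\in E$ preserves the strict inequality since each pair has been handled. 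Hence the same $h_1,\ldots,h_n$ witness the desired estimate.

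There is essentially no obstacle here: the corollary is a straightforward quantitative consequence of Theorem \ref{amen-char} once one replaces $\eps$ by $\eps^2$ and invokes Markov's inequality to convert an average bound into a bound on the cardinality of the set of large terms. The only minor point to keep track of is that the permutation $\sigma$ realising the minimum in the Markov estimate is the one that realises the minimum in the Arens--Eells bound provided by the theorem (in view of Lemma \ref{konig}), but this is automatic from the way the estimates chain together.
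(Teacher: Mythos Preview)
Your proposal is correct and matches the paper's own approach: the author states only that the corollary follows from Theorem \ref{amen-char} by ``a straightforward application of Markov's inequality,'' and your argument (applying the theorem with $\eps^2$ and then Markov) is exactly how one fills in that sentence.
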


\begin{exa}[Failure of Theorem \ref{amen-char} for $\Z$ and $\R$]
Whereas Theorem \ref{amen-char} provides a very useful reformulation of amena\-bility for a very large class of topological groups, it is easy to see that fails for the  two amenable groups par excellence, namely, $\Z$ and $\R$. Indeed, consider the example of $\Z$ with the euclidean metric $d(x,y)=|x-y|$. To help keep vector space operations in $\AE \Z$ apart from algebra in $\Z$, we will denote by $\delta_x$ the Dirac functional at $x\in \Z$. So $\delta_x$ is veiwed as an element of the group algebra $\R \Z$. We claim that $E=\{0,1\}$ and $\eps=\frac 12$ provide a counter-example to Theorem \ref{amen-char} for $\Z$. Indeed,  for any $\beta=\sum_{i=1}^n\lambda_i\delta_{x_i}\in \Delta \Z$ with $x_i\in \Z$ and $\lambda_i>0$, $\sum_{i=1}^n\lambda_i=1$, we let $\phi\colon \Z\to \R$ be the $1$-Lipschitz function $\phi(x)=x$ and note that
\maths{
\norm{\beta\delta_1-\beta\delta_0}_{\AE}
&=\NORM{\sum_{i=1}^n\lambda_i\big(\delta_{x_i+1}-\delta_{x_i}\big)}_{\AE}\\
&\geqslant \sum_{i=1}^n\lambda_i(\phi({x_i+1})-\phi({x_i})\big)\\
& =\sum_{i=1}^n\lambda_i({x_i+1}-{x_i})\\
&=1.
}
So $\norm{\beta\delta_1-\beta\delta_0}_{\AE}>\eps$ for all choices of $\beta\in \Delta\Z$.
\end{exa}

\begin{exa}[The infinite dihedral group]\label{dihedral}
 The infinite dihedral group  $D_\infty=(\Z/2\Z)*(\Z/2\Z)$ can be represented as the group of all isometries of $\Z$, where $\Z$ is given the standard euclidean metric. Thus $D_\infty$ is generated by the translation $\tau(x)=x+1$ and the reflection $\sigma(x)=-x$. Alternatively, $D_\infty$ has the finite presentation
$$
D_\infty=\langle \tau,\sigma\del \sigma\tau=\tau^{-1}\sigma, \sigma^2=1\rangle.
$$
In particular, elements of $D_\infty$ can be uniquely written in the form $\tau^n$ or $\tau^n\sigma$ for some $n\in \Z$. Let $d$ be the left-invariant word metric associated with the finite generating set $S=\{\tau,\sigma\}$ and consider the associated Arens--Eells space $\AE D_\infty$. 

Suppose now that $E\subseteq D_\infty$ is a finite set and $\eps>0$. Without loss of generality, we can suppose that $E=\{\tau^n, \tau^n\sigma\del |n|\leqslant N\}$ and $\eps=\frac 1{2N}$ for some $N$. We then define $\beta\in \Delta D_\infty$ by
$$
\beta=\frac 1{4M+2}\Big(\sum_{n\in I}\tau^n +\sum_{n\in I}\tau^n\sigma\Big)
$$ 
where $I=[-M,M]$ for some $M$ so that $\frac{2N^2+2N}{4M+2}<\frac 1N$ and  claim that 
$$
\norm{\beta g-\beta f}_{\AE}<\eps
$$
for all $g,f\in E$.

To see this, it suffices to show that $\norm{\beta g-\beta}_{\AE}<2\eps=\frac 1N$ for all $g\in E$. So let $g\in E$ be given and suppose first that $g=\tau^k$ with $0\leqslant k\leqslant N$. Then
$$
\beta g=\frac 1{4M+2}\Big(\sum_{n\in (I+k)}\tau^n +\sum_{n\in (I-k)}\tau^n\sigma\Big)
$$
and so 
\maths{
(4&M+2)\cdot \norm{\beta g-\beta}_{\AE}\\
\leqslant& 
\NORM{
\sum_{n\in (I+k)\setminus I}\tau^n+\sum_{n\in (I-k)\setminus I}\tau^n\sigma-\sum_{n\in I\setminus (I+k)}\tau^n -\sum_{n\in I\setminus (I-k)}\tau^n\sigma
}_{\AE}\\
\leqslant& 
\sum_{n=1}^k\Norm{\tau^{M+n} -\tau^{M-k+n}\sigma}_{\AE}+\sum_{n=0}^{k-1}\Norm{\tau^{-M-k+n}\sigma -\tau^{-M+n}}_{\AE}\\
\leqslant& 
\sum_{n=1}^k\Norm{\tau^{k} -\sigma}_{\AE}+\sum_{n=1}^k\Norm{\sigma -\tau^{k}}_{\AE}\\
\leqslant &k\cdot (k+1)+k\cdot (k+1)\\
\leqslant &2N^2+2N.
}
It thus follows that $\norm{\beta g-\beta}_{\AE}\leqslant \frac{2N^2+2N}{4M+2}<\frac 1N$ as required. All other cases of $g=\tau^k$ or $g=\tau^k\sigma$ for $|k|\leqslant N$ are proved in a similar way.
\end{exa}

From Example \ref{dihedral}, we see that for the infinite dihedral group the almost translation-invariant averages $\beta\in \Delta D_\infty$ can be taken on the form 
$$
\beta=\frac 1{|F|}\sum_{f\in F}f
$$
for appropriate finite subsets $F\subseteq D_\infty$. In particular, this means that the Arens--Eells norm $\norm{\beta -\beta g}_{\AE}$ can be expressed as follows
$$
\norm{\beta -\beta g}_{\AE}=\frac 1{|F|}\sum_{f\in F\setminus Fg}d(f,\theta(f))
$$
for some bijection $\theta$ between $F\setminus Fg$ and $Fg\setminus F$. This formulation provides a more obvious generalisation of F\o lner's criterion and it is thus natural to wonder if this can always be achieved.

\begin{prob}
Suppose $\Gamma$ is a finitely generated amenable group with no non-zero homomorphisms to $\R$ and equip $\Gamma$ with its left-invariant word metric. Let $\eps>0$ and  $E\subseteq \Gamma$ be finite. Can the $\beta\in \Delta\Gamma$ in Theorem \ref{amen-char} always be taken to be of the form 
$$
\beta=\frac 1{|F|}\sum_{f\in F}f
$$
for some appropriate finite subset $F\subseteq \Gamma$?
\end{prob}

\section{Locally compact groups}\label{LCSC}
In the following, let $G$ be a locally compact second-countable group equipped with its left Haar measure. By $|A|$ and $\int f\,dx$ we denote respectively the Haar measure of a subset $A\subseteq G$ and the integral of a function $f\in L^1(G)$, where the latter denotes the space of real-valued integrable functions on $G$. We recall that $L^1(G)$ is a Banach algebra under the convolution product
$$
(f*h)(x)=\int f(y)h(y\inv x)\,dy.
$$

Assume now that $(V,\pi)$ is an isometric Banach $G$-module. Similarly to the extension of the representation $\pi$ from $G$ to the group algebra $\R G$, we may also promote $\pi$ to the Banach algebra $L^1(G)$. Indeed, this is done via the Pettis integral as follows. Fix $f\in L^1(G)$ and let $v\in V$ be given. Because the map $x\in G\mapsto \pi(x)v\in V$ is bounded and norm continuous, by Theorem A3.3 \cite{folland}, there is a unique vector in $V$, denoted $\int f(x)\pi(x)v\,dx$, so that
$$
\Big\langle \int f(x)\pi(x)v\,dx\,,\,\phi\Big\rangle=\int\langle f(x)\pi(x)v,\phi\rangle\,dx
$$
for all $\phi\in V^*$. Furthermore, by Theorem A3.3 \cite{folland}, 
$$
\NORM{\int f(x)\pi(x)v\,dx}\leqslant \sup_{x\in G}\norm{\pi(x)v}\cdot \int|f|\,dx=\norm v\cdot \norm f_{L^1}.
$$
We thus see that the map $v\mapsto \int f(x)\pi(x)v\,dx$ defines a linear operator on $V$ with norm bounded by $\norm f_{L^1}$. Denote this operator by $\pi(f)=\int f(x)\pi(x)\,dx$ and note that $\pi(f)$ depends linearly on $f$. Moreover, for all $f,h\in L^1(G)$, 
\maths{
\pi(f*h)
&=\iint f(y)h(y\inv x)\pi(x)\, dy\, dx\\
&=\iint f(y)h(z)\pi(yz)\, dz\, dy\\
&=\iint f(y)h(z)\pi(y)\pi(z)\, dz\, dy\\
&=\pi(f)\pi(h).
}
Summing up, we obtain a Banach algebra representation
$$
L^1(G)\overset \pi\longrightarrow\ku L( V)
$$
with $\norm{\pi(f)}\leqslant  \norm f_{L^1}$.

\begin{defi}
Suppose that $d$ is a compatible left-invariant metric on $G$. We denote by $L^{1}_{d,+,1}(G)$ the collection of all probability densities $f\in L^1(G)$ so that the associated measure $\mu$ given by $d\mu=f\,dx$ belongs to $M^+_d(G)$. That is,
$$
L^{1}_{d,+,1}(G)=\big\{f\in L^1(G)\del f\geqslant 0, \;\norm{f}_{L^1}=1, \;  \int d(x,1)f(x)\,dx<\infty\big\}.
$$
\end{defi}
Slightly abusing notation, we simply let $f\, dx$ denote the measure $\mu$ on $G$ with density $f$, that is so that $d\mu=f\,dx$. Furthermore, 
the Wasserstein distance can also be viewed as a distance on the space of densities  $L^1_{d,+,1}(G)$ by simply letting
$$
{\sf W}(f,h)=\norm{f\, dx-h\, dx}_{\sf KR}.
$$

As usual, we let $\pi$ denote the linear isometric action of $G$ on $\AE G$ induced from the left-multiplication action of $G$ in itself. By the above, this induces a Banach algebra representation $L^1(G)\overset \pi\longrightarrow\ku L( \AE G)$. For $x\in G$ and $f\in L^1(G)$, let also $R_xf=f(\;\cdot\; x)$ and recall that $\Norm{\Delta(x)\!\cdot\!R_xf}_{L^1}=\norm f_{L^1}$, where $\Delta\colon G\to \R$ denotes the modular function on $G$. 

\begin{lemme}\label{pi(f)}
Suppose $f\in L^1_{d,+,1}(G)$ and $y,z\in G$. Then
$$
\pi(f)(\delta_{y\inv}-\delta_{z\inv})=\Delta(y)(R_yf)\, dx-\Delta(z)(R_zf)\,dx.
$$
\end{lemme}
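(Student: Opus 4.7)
My plan is to unwind the Pettis integral definition of $\pi(f)$, test against an arbitrary $1$-Lipschitz function, and then perform a change of variables via the modular function. By linearity of $\pi(f)$, it suffices to establish the analogous single-atom identity
$$
\pi(f)(\delta_{y\inv})=\Delta(y)(R_yf)\,dx
$$
as elements of $\AE G$. Since $\go{Lip}\,G=\AE G^*$ separates points, it is enough to check that both sides pair identically with every $1$-Lipschitz function $\phi$ on $G$.

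For the left-hand side, I would unpack the defining property of the Pettis integral and the action $\pi(x)\delta_{y\inv}=\delta_{xy\inv}$ to obtain
$$
\big\langle \phi,\pi(f)(\delta_{y\inv})\big\rangle=\int f(x)\,\phi(xy\inv)\,dx.
$$
Now I would apply the change of variable $u=xy\inv$, i.e.\ $x=uy$. Using the standard transformation rule for left Haar measure under right translation, $\int F(uy)\,du=\Delta(y)\inv\int F(x)\,dx$, the previous expression becomes
$$
\Delta(y)\int f(uy)\,\phi(u)\,du=\Delta(y)\int (R_yf)(u)\,\phi(u)\,du,
$$
which is precisely the pairing $\langle\phi,\Delta(y)(R_yf)\,dx\rangle$ in the Kantorovich--Rubinstein sense.

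Before concluding, I must verify that the measure $\Delta(y)(R_yf)\,dx$ actually lives inside ${\sf KR}(G)\subseteq\AE G$, so that the pairing makes sense. This amounts to checking that $\Delta(y)R_yf\in L^1_{d,+,1}(G)$. The $L^1$-normalisation is immediate from the relation $\norm{\Delta(y)\cdot R_yf}_{L^1}=\norm f_{L^1}$ recalled just above the lemma. The first-moment condition follows from the same change of variable: $\int d(u,1)\Delta(y)f(uy)\,du=\int d(xy\inv,1)f(x)\,dx=\int d(x,y)f(x)\,dx$, which by the triangle inequality and left-invariance is bounded by $\int d(x,1)f(x)\,dx+d(1,y)<\infty$.

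Subtracting the identity for $z$ from the identity for $y$ and invoking linearity of $\pi(f)$ then yields the stated formula. The only genuinely non-routine step is the modular-function bookkeeping in the change of variable; everything else is a matter of matching definitions and applying Kantorovich duality for separation.
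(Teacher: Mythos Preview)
Your computational core---unwinding the Pettis integral, using $\pi(x)\delta_{y^{-1}}=\delta_{xy^{-1}}$, and applying the right-translation change of variable with the modular function---is exactly what the paper does. The one genuine gap is the reduction to a ``single-atom identity'' $\pi(f)(\delta_{y^{-1}})=\Delta(y)(R_yf)\,dx$ ``as elements of $\AE G$''. This does not type-check: $\delta_{y^{-1}}$ has total mass $1$, so it lies in $\R G$ but not in $\M G$ (and hence not in $\AE G$), and $\pi(f)$ is only defined as a Pettis integral on the Banach space $\AE G$. Likewise $\Delta(y)(R_yf)\,dx$ is a probability measure, not an element of ${\sf KR}(G)\subseteq\AE G$. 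Worse, pairing a probability measure against a class $\phi\in\go{Lip}\,G={\sf Lip}\,G/\{\text{constants}\}$ is ill-defined, since adding a constant to $\phi$ changes $\int\phi\,d\mu$ when $\mu(G)\neq 0$. So neither side of your single-atom identity makes sense in the framework at hand, and the separation argument via $\AE G^*$ does not apply to it.

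The fix is immediate: do not split. Work with the difference $\delta_{y^{-1}}-\delta_{z^{-1}}\in\M G\subseteq\AE G$ throughout, exactly as the paper does; then $\pi(f)$ is legitimately applied, the right-hand side $\Delta(y)(R_yf)\,dx-\Delta(z)(R_zf)\,dx$ lies in ${\sf KR}(G)$, and the pairing with any representative $\phi$ of a class in $\go{Lip}\,G$ is well-defined (constants cancel). Your change-of-variable computation then goes through verbatim for each of the two terms. Your verification that $\Delta(y)R_yf\in L^1_{d,+,1}(G)$ is a useful addition---it is precisely what guarantees the difference measure lives in ${\sf KR}(G)$---and the paper postpones this check to the proof of the subsequent theorem.
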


\begin{proof}
To see this, let $\phi\in \go {Lip}\; G=\AE G^*$. Then
\maths{
\big\langle \pi(f)(\delta_{y\inv}-\delta_{z\inv}), \phi \big\rangle
&=\int \big\langle f(x)\pi(x)(\delta_{y\inv}-\delta_{z\inv}),\phi\big\rangle \, dx\\
&=\int  f(x)\big\langle \delta_{xy\inv}-\delta_{xz\inv},\phi\big\rangle \, dx\\
&=\int  f(x)\phi(xy\inv)\,dx-\int f(x)\phi(xz\inv)\, dx\\
&=\Delta(y)\int  f(uy)\phi(u)\,du-\Delta(z)\int  f(uz)\phi(u)\,du\\
&=\int \phi\;\Delta(y)(R_{y}f)\,du-\int \phi\;\Delta(z)(R_{z}f)\,du\\
&=\big\langle \Delta(y)R_{y}f\,dx-\Delta(z)R_{z}f\, dx,\phi\big\rangle.\\
}
Because the dual $\go {Lip}\; G$ separates points in $\AE G$, the equality follows.
\end{proof}

\begin{thm}
Suppose $G$ is an amenable locally compact second-countable group and $d$ is a compatible left-invariant metric on $G$. Assume also that $G$ has no non-trivial continuous homomorphisms $G\to \R$. Then, for every compact subset $C\subseteq G$ and $\eps>0$, there is a compactly supported $f\in L^1_{d,+,1}(G)$ so that 
$$
{\sf W}(R_yf,R_zf)<\eps
$$
for all $y,z\in C$.
\end{thm}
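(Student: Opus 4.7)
The plan is to apply Theorem \ref{amen-char} to the isometric left-multiplication action of $G$ on $\AE G$, and then use Lemma \ref{pi(f)} as a dictionary translating statements about right-translates of $f$ into statements about differences $\delta_{y^{-1}}-\delta_{z^{-1}}$ in $\AE G$. A mollification argument bridges the finitely supported $\beta\in\Delta G$ supplied by Theorem \ref{amen-char} and the compactly supported density $f\in L^1_{d,+,1}(G)$ needed here.

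Concretely, fix $\eps>0$ and compact $C\subseteq G$, together with auxiliary constants $\eta,\delta,\delta'>0$ to be specified at the end. By compactness of $C$ and continuity of $d$, I would choose a finite $\delta$-net $E\subseteq G$ for the compact set $\{y^{-1}\del y\in C\}$. Since $G$ is amenable with ${\sf Hom}(G,\R)=\{0\}$, Theorem \ref{amen-char} applied to $E$ yields some
$$
\beta=\sum_{i=1}^n\lambda_i\delta_{g_i}\in\Delta G\qquad\text{with}\qquad\norm{\beta s-\beta t}_{\AE}<\eta\ \ \text{for all }s,t\in E.
$$

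To promote $\beta$ from a sum of point masses to a density, I would fix a compactly supported probability density $h\in L^1_{d,+,1}(G)$ whose support $V$ is an identity neighbourhood so small that $d(u,1)<\delta'$ for all $u\in V$, and set $f=\beta*h$. Explicitly, $f(x)=\sum_i\lambda_ih(g_i^{-1}x)$, so $f\geqslant 0$, $\norm{f}_{L^1}=1$ and ${\sf supp}(f)\subseteq\bigcup_ig_iV$ is compact, confirming $f\in L^1_{d,+,1}(G)$. The algebraic key is the factorization $\pi(f)=\pi(\beta)\pi(h)$ in the Banach-algebra representation $\pi\colon L^1(G)\to\ku L(\AE G)$. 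Combined with the elementary estimate
$$
\norm{\pi(h)\delta_w-\delta_w}_{\AE}\leqslant\int h(u)\,d(uw,w)\,du=\int h(u)\,d(u,1)\,du<\delta',
$$
(using left-invariance of $d$) and $\norm{\pi(\beta)}\leqslant 1$, this yields, for all $y,z\in C$ with nearest-neighbour representatives $y_0^{-1},z_0^{-1}\in E$,
$$
\norm{\pi(f)(\delta_{y^{-1}}-\delta_{z^{-1}})}_{\AE}\leqslant\norm{\beta y_0^{-1}-\beta z_0^{-1}}_{\AE}+2\delta'+d(y^{-1},y_0^{-1})+d(z^{-1},z_0^{-1})<\eta+2\delta'+2\delta.
$$

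Finally, Lemma \ref{pi(f)} identifies the left-hand side with the Kantorovich–Rubinstein norm $\norm{\Delta(y)R_yf\,dx-\Delta(z)R_zf\,dx}_{\sf KR}$, i.e., with the Wasserstein distance between the probability densities $\Delta(y)R_yf$ and $\Delta(z)R_zf$. Since the modular function $\Delta$ is bounded between positive constants on the compact set $C$, shrinking $\eta,\delta,\delta'$ in terms of $\eps$ and these bounds gives ${\sf W}(R_yf,R_zf)<\eps$. I anticipate that the main obstacle is really just the bookkeeping of these three auxiliary constants against the modular-function normalizations on $C$; the essential analytical input—approximate invariance of $\beta\in\Delta G$ measured in the $\AE$-norm rather than the $\ell^1$-norm—is already supplied by Theorem \ref{amen-char}, and the rest is a routine mollification.
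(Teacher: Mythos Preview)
Your approach is essentially the paper's: apply Theorem \ref{amen-char}, mollify the resulting $\beta\in\Delta G$ to a compactly supported density, and invoke Lemma \ref{pi(f)}. Your mollification $f=\beta*h$ and the estimate $\norm{\pi(h)\delta_w-\delta_w}_{\AE}<\delta'$ are in fact a bit cleaner than the paper's version, which instead replaces each point mass $\delta_{u_i}$ by $\tfrac{1}{|U_i|}\chi_{U_i}$ for a small neighbourhood $U_i\ni u_i$.

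There is, however, a genuine gap in your final paragraph. You propose to pass from a bound on $\norm{\Delta(y)R_yf\,dx-\Delta(z)R_zf\,dx}_{\sf KR}$ to a bound on ${\sf W}(R_yf,R_zf)$ by invoking only that $\Delta$ is bounded on $C$. That does not work: if $\Delta(y)\neq\Delta(z)$ then $R_yf$ and $R_zf$ have different total masses $\Delta(y)^{-1}$ and $\Delta(z)^{-1}$, so $R_yf\,dx-R_zf\,dx\notin{\sf KR}(G)$ and ${\sf W}(R_yf,R_zf)$ is not even defined. Even if one tried to rescale, the correction term would involve the first moment $\int d(x,1)R_zf(x)\,dx$, for which you have no a priori control independent of the (uncontrolled) support of $\beta$.

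The fix is immediate and is exactly what the paper does first: observe that $\log\Delta\in{\sf Hom}(G,\R)=\{0\}$, hence $\Delta\equiv 1$ and $G$ is unimodular. Then Lemma \ref{pi(f)} gives directly $\pi(f)(\delta_{y^{-1}}-\delta_{z^{-1}})=R_yf\,dx-R_zf\,dx$, and your estimate $\eta+2\delta'+2\delta$ is already the Wasserstein distance you want. No bookkeeping against modular-function bounds is needed.
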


\begin{proof}
Let us first remark that $\log \Delta\colon G\to \R$ is a continuous group homomorphism, which is therefore trivial. It thus follows that $\Delta\equiv 1$, i.e., that $G$ is unimodular. In particular,  for all $f\in L^1_{d,+,1}(G)$ and $y\in G$, we have
$\Norm{R_yf}_{L^1}=\norm{f}_{L^1}$ and
\maths{
 \int d(x,1)    R_yf(x)\,dx
 &= \int d(x,1)f(xy)\,dx\\
 &= \int d(zy\inv,1)f(z)\,dz\\
 &\leqslant \int d(zy\inv, z)f(z)\,dz+ \int d(z, 1)f(z)\,dz    \\
 &= \int d(y\inv, 1)f(z)\,dz+ \int d(z, 1)f(z)\,dz    \\
  &=  d(y\inv, 1)\norm{f}_{L^1}+ \int d(z, 1)f(z)\,dz    \\
  &<\infty,
}
which shows that $R_yf\in L^1_{d,+,1}(G)$.

Because $C$ is compact, we may pick a finite set $E\subseteq C$ so that 
$$
\sup_{x\in C}\inf_{y\in E}d(x\inv, y\inv)<\tfrac \eps4.
$$ 
Then, by Theorem \ref{amen-char}, choose some $\beta\in \Delta G$ so that 
$$
\norm{\pi(\beta)(\delta_{ y\inv}-\delta_{z\inv})}_{\sf KR}=\norm{\beta y\inv-\beta z\inv}_{\AE}<\frac\eps8
$$ 
for all $y,z\in E$. Let $F=\{\delta_{ y\inv}-\delta_{z\inv}\del y,z\in E \}$ and write $\beta=\sum_{i=1}^n\lambda_iu_i$ with $\lambda_i>0$, $\sum_{i=1}^n\lambda_i=1$ and $u_i\in G$. Pick compact neighbourhoods $U_i\subseteq G$ of $u_i$ so that 
$$
\norm{\pi(u_i)v -\pi(x)v }_{\sf KR}<\frac \eps8
$$
for all $x\in U_i$ and $v\in F$. Letting $f=\sum_{i=1}^n\frac{\lambda_i}{|U_i|}\chi_{U_i}$, where $\chi_{U_i}$ denotes the characteristic function of $U_i$, we see that $f\geqslant 0$, $\norm f_{L^1}=1$ and finally, because $f$ has compact support, that $\int d(x,1)f(x)\,dx<\infty$. Furthermore, for all $v\in F$ and $\phi\in \go{Lip}\,G$ with $L(\phi)\leqslant 1$, we have 
\maths{
\Big|\big\langle \pi(\beta)v-\pi(f)v, \phi \big\rangle\Big|
&\leqslant \sum_{i=1}^n\lambda_i\Big|\big\langle \pi(u_i)v, \phi \big\rangle-\frac1{|U_i|}\big\langle \pi(\chi_{U_i})v, \phi \big\rangle\Big|\\
&\leqslant \sum_{i=1}^n\lambda_i\Big|\big\langle \pi(u_i)v, \phi \big\rangle-\frac1{|U_i|}\int_{U_i}\big\langle \pi(x)v, \phi \big\rangle\, dx\Big|\\
&= \sum_{i=1}^n\lambda_i\Big|      \frac1{|U_i|}\int_{U_i}   \big\langle \pi(u_i)v- \pi(x)v, \phi \big\rangle\, dx\Big|\\
&\leqslant \sum_{i=1}^n\lambda_i      \frac1{|U_i|}\int_{U_i}  \Big| \big\langle \pi(u_i)v- \pi(x)v, \phi \big\rangle\Big|\, dx\\
&\leqslant \sum_{i=1}^n\lambda_i      \frac1{|U_i|}\int_{U_i} \norm{\pi(u_i)v -\pi(x)v }_{\sf KR}\,L(\phi)\,dx\\
&\leqslant \frac\eps8.
}
It thus follows that $\norm{\pi(\beta)v-\pi(f)v}_{\sf KR}\leqslant \frac \eps8$ for all $v\in F$ and hence, by unimodularity and Lemma \ref{pi(f)}, that
$$
{\sf W}(R_yf,R_zf)=\norm{R_yf\, dx-R_zf\,dx}_{\sf KR}=\norm{\pi(f)(\delta_{ y\inv}-\delta_{z\inv})}_{\sf KR}<\frac\eps4
$$
for all $y,z\in E$.

Assume now that $y,z\in C$ and pick some $y', z'\in E$ with $d(y,y')<\tfrac\eps4$ and $d(z,z')<\tfrac\eps4$. We then have
\maths{
W(R_yf,R_zf)
&\leqslant W(R_yf,R_{y'}f)+W(R_{y'}f,R_{z'}f)+W(R_{z'}f,R_zf)\\
&\leqslant \norm{\pi(f)(\delta_{ y\inv}-\delta_{(y')\inv})}_{\sf KR}+\tfrac\eps4+\norm{\pi(f)(\delta_{ (z')\inv}-\delta_{z\inv})}_{\sf KR}\\
&\leqslant \norm{\pi(f)}d({ y\inv},{(y')\inv})+\tfrac\eps4+\norm{\pi(f)}d((z')\inv,z\inv)\\
&<\eps, 
}
which finishes the proof. 
\end{proof}



\begin{thebibliography}{99}



\bibitem{alaoglu}L. Alaoglu and G. Birkhoff, {\em General ergodic theorems}, Ann. Math. 41 (1940), no. 2, 293--309.

\bibitem{arens}Richard Arens and James Eells, Jr., {\em On embedding uniform and topological spaces}, Pacific J. Math.
Vol. 6 (1956), No. 3, 397--403.




\bibitem{cuth}M. C\'uth and M. Doucha, {\em Projections in Lipschitz-free spaces induced by group actions}, Math. Nachr. 296 (2023), no. 8, 3301--3317.


\bibitem{deleuw}Karel de Leeuw, {\em Banach spaces of Lipschitz functions}, 
Studia Mathematica (1961)
Volume: 21, Issue: 1, page 55--66.

\bibitem{dudley}R. M. Dudley, {\em Real Analysis and Probability}, Cambridge Studies in Advanced Mathematics, Cambridge University Press, New York, 2002.



\bibitem{fakhoury}Hicham Fakhoury, {\em S\'elections lin\'eaires associ\'ees au th\'eor\`eme de Hahn--Banach}, J. Funct. Anal. 11 (1972) 436--452.
  
\bibitem{folland}G. B. Folland, {\em A course on Abstract Harmonic Analysis}, CRC Press, Boca Raton, FL, 1995.



\bibitem{folner}E. F\o lner, {\em On groups with full Banach mean value}, Math. Scand. 3 (1955), 243--254. 


\bibitem{greenleaf} Frederick P. Greenleaf, {\em Invariant means on topological groups and their applications}, Van Nostrand
Mathematical Studies 16, Van Nostrand Reinhold Co., New York-Toronto, Ont.-London, 1969.

\bibitem{juschenko}K. Juschenko and F. M. Schneider, {\em Skew-amenability of topological groups},  Comment. Math. Helv. 96 (2021), 805--851.


\bibitem{kakutani}Shizuo Kakutani, {\em Iteration of linear operations in complex Banach spaces}, 
Proc. Imp. Acad. 14 (8), 295--300, (1938).

\bibitem{kalton}Nigel J. Kalton, {\em Locally complemented subspaces and $\ku L^p$-spaces for $0<p<1$}, 
Math. Nachr. 115 (1984), 71--97.

\bibitem{kantorovich1}L. V. Kantorovich and G. Sh. Rubinstein, {\em On a functional space and certain extremal problems}, Dokl. Akad. Nauk. SSSR {\bf 115} (1957), 1058--1061.

\bibitem{kantorovich2}L. V. Kantorovich and G. Sh. Rubinstein, {\em On a space of completely additive functions}, Vestnik Leningrad Univ. Math. {\bf 13} (1958), 52--59. (Russian)

\bibitem{kantorovich3}L. V. Kantorovich and G. P. Akilov, {\em Functional Analysis} (second edition), Pergammon Press, New York, 1982. 

\bibitem{konig} D. K\H{o}nig, {\em Theorie der endlichen und unendlichen Graphen}, Akademische Verlagsgesellschaft M.B.H., Leipzig 1936.



\bibitem{megrelishvili}M. G. Megrelishvili, {\em Fragmentability and Continuity of Semigroup Actions}, Semigroup Forum, Vol. 57 (1998), 101--126.


\bibitem{moore}Justin T. Moore, {\em Amenability and Ramsey theory},  Fund. Math. 220 (2013), no. 3,  263--280.


\bibitem{nagel}Rainer J. Nagel {\em Mittelergodische Halbgruppen linearer Operatoren}, Ann. Inst. Fourier (Grenoble) 23 (1973), no. 4, 75--87.




\bibitem{ozawa} Narutaka Ozawa, {\em Amenability for unitary groups of simple monotracial C*-algebra},
Münster J. Math., to appear.   arXiv:2307.08267

\bibitem{pedersen}Gert K. Pedersen, {\em Analysis  NOW}, Graduate Texts in Mathematics, 118. Springer Verlag, New York, 1989.

\bibitem{pestov}Vladimir G. Pestov, {\em An amenability-like property of finite energy path and loop groups}, C. R. Math.
Acad. Sci. Paris 358 (2020), no. 11--12, pp. 1139--1155.


\bibitem{reiter}H. Reiter, {\em The convex hull of translates of a function in $L^1$}, J. London Math. Soc. 35 (1960), 5--16.


\bibitem{rickert}Neil W. Rickert, Amenable groups and groups with the fixed point property, Trans. Amer. Math.
Soc. 127 (1967), pp. 221--232.


\bibitem{thom2}Friedrich Martin Schneider and Andreas Thom, {\em Topological matchings and amenability},  Fundamenta Mathematicae 238 (2017), pp. 167--200.

\bibitem{thom}Friedrich Martin Schneider and Andreas Thom, {\em On F\o lner sets in topological groups}, Compositio Mathematica, Volume 154, Issue 7, July 2018, pp. 1333--1361.


\bibitem{villani}C\'edric Villani, {\em Optimal Transport, Old and New}, Volume 338, Grundlehren der mathematischen Wissenschaften, Springer Verlag, Berlin, 2009.

\bibitem{weaver}Nik Weaver, {\em Lipschitz algebras}, World Scientific Publishing Co., Inc., River Edge, NJ, 1999.

\bibitem{yosida}K\^{o}saku Yosida, {\em Mean ergodic theorem in Banach spaces}, 
Proc. Imp. Acad. 14 (8), 292--294 (1938).
\end{thebibliography}
\end{document}